\newtheorem{theorem}{Theorem}[section]
\newtheorem{lemma}[theorem]{Lemma}
\newtheorem{proposition}[theorem]{Proposition}
\newtheorem{definition}[theorem]{Definition}
\newtheorem{corollary}[theorem]{Corollary}
\newtheorem{question}[theorem]{Question}
\newtheorem*{theorem*}{Theorem}
\newtheorem*{corollary*}{Corollary}
\theoremstyle{remark}
\newtheorem{remark}[theorem]{Remark}
\newtheorem{example}[theorem]{Example}
\definecolor{darkblue}{rgb}{0.0,0,0.7} 
\newcommand{\defn}[1]{\emph{#1}}
\def\R{R}
\def\E{\mathbb{E}}
\def\P{\mathbb{P}}
\def\tO{\tilde{\mathcal{O}}}
\def\tTheta{\tilde{\Theta}}
\def\O{\mathcal{O}}
\def\a{\alpha}
\def\ka{\kappa}
\def\si{\sigma}
\def\bC{\bm{C}}
\def\ll{\bm{\ell}}
\DeclareMathOperator{\Var}{Var}
\DeclareMathOperator{\Cov}{Cov}
\DeclareMathOperator{\Occ}{Occ}
\DeclareMathOperator{\MSet}{MSet}
\DeclareMathOperator{\NoRn}{NoRn}
\def\bD{\bm{D}}
\def\Kv{K_1}
\def\Ka{K_2}
\def\Kab{K_3}
\def\Kb{K_4}
\def\Kbb{K_5}
\def\Kc{K_6}
\def\Kd{K_7}
\def\Kdb{K_8}
\def\Keb{K_{9}}
\def\Kec{K_{10}(\eps)}
\def\Ku{K_{11}}
\def\Kl{K_{12}}
\newcommand{\esper}{\mathbb{E}}
\newcommand{\MWST}[1]{\mathcal{M} \big( #1 \big)}
\newcommand{\compl}[1]{#1^c}
\newcommand{\word}{\sigma}
\newcommand{\const}{\mathbf{C}}
\newcommand{\ub}{\bm{u}}
\newcommand{\uu}{\bm{u}}
\newcommand{\ptn}{\mathcal{P}}
\newcommand{\Err}{\mathrm{Err}}
\def\({\left(}
\def\){\right)}
\newcommand{\pa}{\mathcal{A}}
\def\Dep{G}
\def\WDep{G}
\def\WDepInd{H}
\def\ellorr{r}
\def\eps{\varepsilon}
\def\One{\bm{1}}
\def\OccSP{\Occ_n^{\mathcal A}}
\def\OccMP{\Occ^{\tau}}
\def\arcs{a}
\begin{document}

\begin{frontmatter}
\title{Central limit theorems for patterns in multiset permutations and set partitions}
\runtitle{Central limit theorems for patterns}

\begin{aug}
  \author{\fnms{Valentin} \snm{F\'eray}
\thanksref{t1}\ead[label=e1]{valentin.feray@math.uzh.ch}}

\thankstext{t1}{The author's research is partially supported by the Swiss National Science Fundation (SNSF),
grant nb 200020\_172515.}
\runauthor{V. F\'eray}

\affiliation{University of Zurich}
\address{Institute of Mathematics, University of Zurich,\\ Winterthurerstrasse 190, 8057 Zürich, Switzerland \\
\printead{e1}}
\end{aug}

\begin{abstract} 
   We use the recently developed method of weighted dependency graphs
   to prove central limit theorems for the number of occurrences of any fixed pattern in multiset permutations and in set partitions.
   This generalizes results for patterns of size 2 in both settings, obtained by Canfield, Janson and Zeilberger
   and Chern, Diaconis, Kane and Rhoades, respectively.
\end{abstract}

\begin{keyword}[class=MSC]
  \kwd{60C05, 60F05, 05A05, 05A18.}
\end{keyword}

\begin{keyword}
\kwd{combinatorial probability, central limit theorem, dependency graphs, patterns, multiset permutations, set partitions.}
\end{keyword}

\end{frontmatter}

\section{Introduction}

\subsection{Background and informal presentation of the main results}

A natural parameter of interest in the framework of random combinatorial structures
is the number of occurrences of a given substructure.
When this substructure has a fixed size,
we observe in many cases that this number is asymptotically normal.
Such a central limit theorem (CLT) for substructures was first proved
for random graphs \cite{RucinskiCLTSubgraphs,JansonOrthogonalDecomposition}
and random words \cite{flajolet06hidden}.
More recently, similar results for pattern occurrences
in uniform random permutations were obtained:
see the works of Fulman \cite{fulman04inversions} (for inversions and descents),
Goldstein \cite[see in particular Example 3.2]{goldstein05CombiCLT} (for consecutive patterns),
B\'ona \cite{BonaMonotonePatterns} (for monotone patterns, both in the consecutive and classical settings),
Janson, Nakamura and Zeilberger \cite{janson2015asymptotic} (for general classical patterns)
and Hofer \cite{hofer2018central} (for vincular patterns).
We also refer to the work of the author \cite{feray13Ewens} and
Crane, DeSlavo and Elizalde \cite{crane18Mallows} for results for Ewens distributed and Mallows distributed
random permutations, respectively.

Most of these results are based on the theory of dependency graphs,
used either in combination with cumulant estimates or Stein's method,
see {\em e.g.} \cite[Section 3]{hofer2018central} for an overview of these tools.
One exception is the work of Janson, Nakamura and Zeilberger \cite{janson2015asymptotic},
which uses the theory of $U$-statistics \cite{hoeffding48UStat,janson1997gaussian}.
In all these methods, a key feature is the independence of occurrences
of the given fixed substructure in disjoint sets of positions.
\medskip

In this paper, we investigate CLTs for substructures in two other families of combinatorial objects:
multiset permutations and set partitions.
For both objects, some notion of patterns have been studied in the literature,
see \cite{albert2001permutations} and \cite{diaconis14average}, respectively.
In both settings, a CLT is only known
for the simplest kind of patterns: inversions in multiset permutations,
where the central limit theorem was established by 
Canfield, Janson and Zeilberger~\cite{canfield11mahonian} (see also~\cite{thiel2016inversion})
and crossings in set partitions, from the work of 
Chern, Diaconis, Kane and Rhoades~\cite{CLT_SetPartitionsStatistics}.
The methods used in these papers do not seem to be generalizable to longer patterns.
Indeed they are based on the following facts, which only hold for inversions and crossings, respectively:
the explicit generating functions
of inversions in multiset permutations has a simple explicit product from;
conditionally on the starting and ending points of blocks                
in set partitions, the number of crossings is a sum of independent random variables.

What makes patterns in multiset permutations and set partitions harder to study is 
that occurrences of a given pattern in disjoint sets of places
are no longer independent events.
We overcome this difficulty by using the theory of {\em weighted dependency graphs},
recently developed by the author \cite{feray18dependency}.

Our main results, \cref{Thm:CLT_Multiset,Thm:CLT_SetPart}, are {\em the asymptotic normality of the number of occurrences
of any fixed pattern} in random multiset permutations and in random set partitions.
For multiset permutations, we need a slight regularity assumption on the sequence
of multisets that we consider.
We refer to \cref{ssec:ThmMultiset,ssec:ThmSetPart} respectively, for precise definitions of the notions
of patterns in both settings and for precise statements of the main results.

\subsection{Terminology and notation}
Before stating precisely our main theorem, we introduce some notation.

{\em Sets and multisets.} As usual, we write $[n]$ for the set $\{1,\ldots,n\}$.
A multiset is an {\em unordered} collection of elements, with possible repetitions.
Given a multiset $B$ we write $|B|$ and $\#B$
for its number of elements, and number of \defn{distinct} elements, respectively.

{\em Probability theory.}
Indicator functions will be denoted with the symbol $\One$, namely
\[\One[P]:=\begin{cases}
  1\text{ if $P$ holds,}\\
  0\text{ otherwise.}
             \end{cases}
\]
Throughout the paper, 
we say that a sequence $(X_n)_{n \ge 1}$ of real-valued
random variables is {\em asymptotically normal} if
the following convergence in distribution holds:
    \begin{equation}
      \tfrac{X_n- \esper X_n}{\sqrt{\Var X_n}} \to_d Z \text{ as }n \to \infty,
        \label{EqConclusionMainThm}
    \end{equation}
    where $Z$ is a standard Gaussian variable.
\smallskip

{\em Asymptotic notation.}    
Besides, we use the standard $\O,\Theta,\tO,\tTheta$ symbols for asymptotic comparisons:
we write $f(n)=\O(g(n))$ if there exists $C>0$ such that $|f(n)| \le C g(n)$, for all $n$ sufficiently large.
Furthermore, $f(n)=\Theta(g(n))$ stands for $f(n)=\O(g(n))$ and $g(n)=\O(f(n))$.
Finally, in the set partition section, we use the tilde variants, for bounds up to logarithmic factors:
more precisely $f(n)=\tO(g(n))$ means that there exists $d \ge0$ such that $f(n)=\O\big(g(n) \ln(n)^d\big)$;
and we write $f(n)=\tTheta(g(n))$ if $f(n) =\tO(g(n))$ and $g(n) =\tO(f(n))$.
We try and make explicit on which parameters the above constant $C$ (and exponent $d$) may depend or not.
Nevertheless, as a rule of thumb, it depends on the pattern (denoted $\tau$ or $\mathcal A$) we consider
and/or the order of the considered moment/cumulant,
but neither on the size $n$ of the objects (or the underlying multiset $M$ for multiset permutations), 
nor on the positions $i_1,\dots,i_\ell$ of a pattern occurrence.

\subsection{First main result: a CLT for patterns in multiset permutations}
\label{ssec:ThmMultiset}
Let $M$ be a finite multiset of positive integers.
Concretely, we can write $M=\{1^{a_1},2^{a_2},\ldots\}$, where exponents are used
to indicate multiplicities; let also $n=|M|=\sum_{j=1}^\infty a_j$.
A \defn{multiset permutation} (or permutation for short) of $M$
is a word containing exactly $a_j$ times the integer $j$ (for each $j \ge 1$).
Define $S_M$ as the set of multiset permutations of $M$. 
Naturally, we have $|S_M|=\frac{n!}{a_1!a_2!\cdots}$.
\begin{example}
The multiset $M=\{1^2,2^2,3\}$ has 30 permutations, one of which is $\word=23112$.
\end{example}
\noindent
We are interested in patterns in multiset permutations, following \cite{albert2001permutations}.
\begin{definition}
  Let $\tau$ be a permutation of size $k$.
  A multiset permutation $\word$ has an occurrence of $\tau$ in position $(i_1,\dots,i_\ell)$ ($i_1<\dots<i_\ell$)
  if the subsequence $\word_{i_1}\, \word_{i_2}\, \dots\, \word_{i_\ell}$ has distinct entries in the same relative order as $\tau$;
  formally if
  \[\word_{i_{\tau^{-1}(1)}} < \word_{i_{\tau^{-1}(2)}} < \dots < \word_{i_{\tau^{-1}(\ell)}}. \]
\end{definition}
For example,  the (multiset) permutation $\word=23112$ contains five occurrences of the pattern $\tau=21$:
in positions $(1,3)$, $(1,4)$, $(2,3)$, $(2,4)$ and $(2,5)$.

We are interested in the random variable $\OccMP_M:=\OccMP(\bm{\word})$, which gives the number of occurrences
of $\tau$ in a uniform random element $\bm{\word}$ of $S_M$.
Fixing $\tau$ and taking a sequence of multisets $M^{(m)}$,
we get a sequence of random variables $\OccMP_{M^{(m)}}$.
Our main theorem is a central limit theorem for $\OccMP_{M^{(m)}}$ 
under some regularity condition on the sequence $M^{(m)}$.
\begin{definition}
  Fix a positive integer $\ell$.
  A sequence $M^{(m)}$ is called $\ell$-regular,
  if there exists $K<1$ and $m_0$, such that, for $m \ge m_0$,
  the sum of the $\ell$ largest multiplicities in $M^{(m)}$
  is at most $K|M^{(m)}|$.
\end{definition}
\begin{theorem}
  \label{Thm:CLT_Multiset}
 Let $\tau$ be a pattern of size $\ell$ and 
 $(M^{(m)})_{m\geq 1}$ be a $\ell$-regular sequence of finite multisets.
 Then $\OccMP_{M^{(m)}}$ is asymptotically normal.
\end{theorem}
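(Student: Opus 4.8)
The plan is to establish the CLT via the method of moments, more precisely by controlling joint cumulants of the family of indicator variables $\One[\word \text{ has an occurrence of } \tau \text{ at position } \bm{i}]$ through a weighted dependency graph. First I would write $\OccMP_M = \sum_{\bm{i}} Y_{\bm{i}}$, where $\bm{i}=(i_1<\dots<i_\ell)$ ranges over increasing $\ell$-tuples of positions and $Y_{\bm{i}}$ is the corresponding occurrence indicator. The key structural input is that the law of a uniform multiset permutation of $M$ is exchangeable and, crucially, that the $Y_{\bm{i}}$'s should admit a weighted dependency graph in the sense of \cite{feray18dependency}: I would guess the natural weight between two tuples $\bm{i},\bm{j}$ is something like $\lambda^{|\{i_1,\dots,i_\ell\}\cap\{j_1,\dots,j_\ell\}|}$ for a suitable $\lambda<1$ depending on $M$ through the regularity constant $K$ (more overlap in positions means more dependence, hence larger weight). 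The heart of the argument is thus a probabilistic estimate: for any fixed collection of tuples $\bm{i}^{(1)},\dots,\bm{i}^{(r)}$, one needs to bound the joint cumulant $\kappa(Y_{\bm{i}^{(1)}},\dots,Y_{\bm{i}^{(r)}})$ by the product of weights along a spanning tree of the ``interaction graph'' — this is where the explicit combinatorics of counting multiset permutations with prescribed values at a fixed set of positions enters.

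Concretely, I would proceed as follows. \textbf{Step 1: reduction to a weighted dependency graph bound.} Recall from \cite{feray18dependency} that if $(Y_{\bm{i}})$ admits a weighted dependency graph with weights as above and uniformly bounded ``$\Psi$'' factors, then normalized sums are asymptotically normal provided the variance grows fast enough relative to the maximal weight contributions; this is the standard template and I would cite the relevant CLT criterion. \textbf{Step 2: verify the weighted dependency graph condition.} For a fixed tuple of position-sets, conditioning on the restriction of the random word $\bm{\word}$ to the union $U$ of those position-sets reduces everything to counting the number of ways to fill $[n]\setminus U$; the ratio of such counts to $|S_M|$ factors approximately as a product over the position-sets, with correction terms that are small precisely because, by $\ell$-regularity, no single value can occupy more than a $K$-fraction of the positions. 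Making this quantitative — turning the ``approximate factorization'' into a cumulant bound of the required product form — is \textbf{the main obstacle}, and I expect it to be the technical core of the paper. The intuition is a negative-association/decorrelation phenomenon: knowing $\word_j$ for $j$ outside $U$ changes the conditional distribution of $\word|_U$ only through the residual multiset, and $\ell$-regularity ensures this residual multiset is close (in total variation on the relevant marginals) to $M$ itself.

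\textbf{Step 3: lower bound on the variance.} One must show $\Var(\OccMP_{M^{(m)}}) = \Theta$ of the ``right'' order — presumably $\Theta(n^{2\ell-1})$ when the alphabet stays bounded, but with the correct exponent determined by the $a_j$'s and $\ell$-regularity — so that the normalization in \eqref{EqConclusionMainThm} does not kill the fluctuations. Here I would isolate the dominant diagonal-type contributions to $\Var(\OccMP) = \sum_{\bm{i},\bm{j}} \Cov(Y_{\bm{i}},Y_{\bm{j}})$, coming from pairs of tuples sharing exactly one position (or zero, if the zero-overlap covariance is nonzero, which it can be here unlike in the i.i.d. word case), and show this sum is genuinely of the claimed order — ruling out unexpected cancellations. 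The $\ell$-regularity hypothesis should again be what guarantees a nonvanishing leading coefficient. \textbf{Step 4: conclude.} Combining the cumulant bounds of Step 2 with the variance estimate of Step 3, all normalized cumulants of order $\ge 3$ tend to $0$ while the second cumulant is $1$ by construction, so \eqref{EqConclusionMainThm} follows from the method of moments. The case analysis on which overlap patterns dominate, and the bookkeeping of the weight exponents through spanning trees, will be the laborious-but-routine part; the conceptual work is entirely in Step 2.
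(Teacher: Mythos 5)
Your high-level skeleton (weighted dependency graph, cumulant bounds, variance lower bound, normality criterion) matches the paper's, but two of your key ingredients are wrong or missing. First, the weights. The guess $\lambda^{|\{i_1,\dots,i_\ell\}\cap\{j_1,\dots,j_\ell\}|}$ with a fixed $\lambda<1$ depending only on the regularity constant is internally inconsistent (it \emph{decreases} with overlap, contradicting your own parenthetical) and, more importantly, of the wrong nature: any weight that stays bounded away from $0$ uniformly in $n$ for disjoint tuples makes the parameter $T_h$ of \cref{Thm:WDG} comparable to $R_n$ itself, and condition \eqref{EqHypoMainThm} then fails. The weights that work are $n$-dependent and are set up one level lower, for the elementary indicators $X_i^j=\One[\word_i=j]$: weight $1$ when the positions coincide, $1/a_j$ when the tuples share the \emph{value} $j$, and $1/n$ otherwise (\cref{thm:WDG_MSet}); the graph for pattern-occurrence monomials is then inherited via \cref{prop:WDG_Products}, and the cumulant bounds for the $X_i^j$ come from the explicit product formula \eqref{eq:joint_moments_Mset} together with the quasi-factorization machinery of \cite{feray18dependency}. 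Your Step 2, which tries to bound cumulants of the aggregated occurrence indicators $Y_{\bm i}$ directly by an approximate-factorization argument indexed only by positions, misses the fact that two tuples sharing a letter $j$ with large multiplicity $a_j$ are correlated at order $1/a_j\gg 1/n$; a position-overlap-only weight cannot record this, so the cumulant bound you would need is false as stated, and the part you yourself flag as ``the main obstacle'' is precisely where the paper's two-tier construction does the work.

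Second, the variance lower bound is not routine bookkeeping. Directly showing $\sum_{\bm i,\bm j}\Cov(Y_{\bm i},Y_{\bm j})$ has the ``right'' order by ``ruling out unexpected cancellations'' is exactly the hard open point: the covariances have mixed signs, and whether $\Var(\OccMP)=\Theta\big(e_\ell(M)e_{\ell-1}(M)\big)$ (i.e.\ $\Theta(n^{2\ell-1})$ in the regular case) holds in general is posed as an open question in the paper (\cref{question}); the paper only proves the weaker bound $\Var(\OccMP_{M^{(m)}})\ge \Kv\, n^{2\ell-3/2}$ (\cref{prop:LowerBoundVarMP}), which suffices for \cref{Thm:WDG} with $s=5$. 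It obtains this by a genuinely different mechanism (\cref{sec:VarianceMP}): the law of total variance conditioning on the uniform position $P$ of a copy of the smallest letter, which produces the nonnegative term $\Var\big[\esper(C|P)\big]=\Theta(n^{2\ell-2})$ plus a recursive inequality relating $M$ to the multiset with that letter removed, iterated $\Theta(\sqrt n)$ times (Hofer's method). Without some such device your Step 3 has no actual argument, and since Step 4 needs the lower bound quantitatively, the proof as proposed does not close.
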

We also give estimates for the expectation and variance of $\OccMP_{M^{(m)}}$:
under the regularity hypothesis, the expectation is easily seen to be of order $\Theta(n^\ell)$.
Furthermore, it follows from our proof of \cref{Thm:CLT_Multiset} that the variance is $\O(n^{2\ell-1})$
and that this bound is tight up to subpolynomial factors, see \cref{ssec:discussion_variance_bounds}.

\begin{remark}
The above condition of $\ell$-regularity is not optimal,
as can already be observed for inversions ($\tau=21$).
In this case, our theorem gives the asymptotic normality under the condition that 
an asymptotically nonzero proportion of elements are different from the two most repeated parts.
This asymptotic normality is in fact known to hold under the weaker condition that
a non bounded number of elements are different from the single most repeated part \cite{canfield11mahonian}.
In general, our theorem could be improved if we could prove a lower bound for the variance
with less restrictive conditions, see \cref{question} below and the discussion after it.
\end{remark}

\subsection{Second main result: a CLT for patterns in set-partitions}
\label{ssec:ThmSetPart}
A \defn{set partition} $\pi$ of $[n]$ is a set of 
non-empty pairwise disjoint \defn{blocks} $B_1,B_2,\ldots,B_k$ with $\bigcup_{i=1}^kB_i=[n]$.
We will denote the set of set partitions of $[n]$ by $\mathcal P([n])$.
It is customary to represent graphically set partitions by a set of arcs
that join every pair of consecutive elements in the same block, see an example on \cref{fig:ExArcRepr}.
Formally, for $i,j\in[n]$, there is an \defn{arc} from $i$ to $j$ in $\pi$
if $j$ is the smallest number that is greater than $i$ and in the same block.

\begin{figure}[t]
  \begin{center}
    \includegraphics{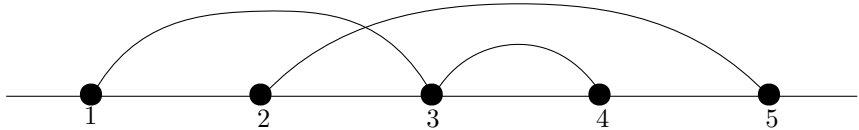}
  \end{center}
  \caption{The arc representation of the set partition $\pi=\{\{1,3,4\},\{2,5\}\}$}
  \label{fig:ExArcRepr}
\end{figure}

An \defn{arc pattern} of length $\ell$ is a subset $\pa\subseteq\{(i,j)\in [\ell]\times[\ell]:i<j\}$
such that for distinct elements $(i_1,j_1)$ and $(i_2,j_2)$ in $\pa$,
we have $i_1\neq i_2$ \defn{and} $j_1\neq j_2$.
Arc patterns of length $\ell$ are exactly the arc representations of set partitions of $\ell$,
but in the sequel, it is more natural to think of them as sets of arcs.
We say that the pattern $\pa$ occurs in a set partition $\pi$ in positions $x_1<x_2<\ldots<x_\ell$ 
if for every $(i,j)\in\pa$ there is an arc from $x_i$ to $x_j$ in $\pi$.
The number of occurrences of $\pa$ in a set partition $\pi$ will be denoted $\Occ^\pa(\pi)$.
As an example, an occurrence of the arc pattern $\{(1,3),(2,4)\}$ is a crossing 
in a set partition $\pi$; the set partition of \cref{fig:ExArcRepr} contains one such occurrence,
in positions $\{1,2,3,5\}$.

We are interested in the random variable $\OccSP:=\Occ^\pa(\bm \pi)$,
which gives the number of occurrences of a fixed arc pattern
in a uniform random set partition $\bm\pi$ of $[n]$.
The main result of this part is the following central limit theorem.
\begin{theorem}
  \label{Thm:CLT_SetPart}
 The number of occurrences $\OccSP$ of any fixed arc pattern $\pa$
 in a uniform random set partition of $[n]$ is asymptotically normal as $n\rightarrow\infty$.
 Moreover, asymptotically, we have $\mathbb E(\OccSP)=\tTheta(n^{\ell-a})$
 and $\Var(\OccSP)=\tTheta(n^{2\ell-2a-1})$, where $\ell$ and $a$ denote the length
 and the number of arcs of $\mathcal A$, respectively.
\end{theorem}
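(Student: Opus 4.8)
\medskip

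The plan is to prove \cref{Thm:CLT_SetPart} by the cumulant method, the key tool being a weighted dependency graph in the sense of \cite{feray18dependency}. Write $\OccSP=\sum_{\mathbf x}Y_{\mathbf x}$, where $\mathbf x=(x_1<\dots<x_\ell)$ runs over the $\ell$-subsets of $[n]$ and $Y_{\mathbf x}:=\One[\pa\text{ occurs at }\mathbf x]$ is a $\{0,1\}$-valued variable. By the standard cumulant criterion for asymptotic normality, it suffices to establish (i) a lower bound of order $n^{2\ell-2a-1}$ for $\Var(\OccSP)$, and (ii) upper bounds $\kappa_r(\OccSP)=\tO\bigl(n^{r(\ell-a)-(r-1)}\bigr)$ on the cumulants of $\OccSP$ for every $r\ge1$. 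Indeed, (ii) for $r=1,2$ gives the announced upper bounds for $\E(\OccSP)$ and $\Var(\OccSP)$, which together with (i) yields the two $\tTheta$-estimates; and for $r\ge3$ one gets, using (i), $\kappa_r(\OccSP)/\Var(\OccSP)^{r/2}=\tO(n^{1-r/2})\to0$, hence \eqref{EqConclusionMainThm}.

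The heart of the argument is a family of estimates for the probability that a prescribed set of arcs occurs in the uniform random set partition $\bm\pi$ of $[n]$. Using the classical asymptotics for the Bell numbers $B_n=|\mathcal P([n])|$, in the form $B_n/B_{n-1}=\tTheta(n)$, I would prove: for any $T\subseteq[n]$ carrying an arc configuration with $q$ arcs (that is, $q$ pairs $u<v$, no two sharing an endpoint),
\[
\P\bigl(\text{every arc of the configuration is present in }\bm\pi\bigr)=\tO\bigl(n^{-q}\bigr),
\]
uniformly in $n$, in $T$ and in the configuration, together with the matching lower bound $\tTheta(n^{-q})$ when the $2q$ endpoints are pairwise at distance of order $n$. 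The upper bound comes by conditioning on the restriction of $\bm\pi$ to the complement of the set of endpoints and controlling, one arc at a time, the conditional probability of completing it, each such factor being $\tO(1/n)$ by a ratio of Bell numbers; the lower bound, by exhibiting enough set partitions realizing the configuration. One also needs a near-independence refinement: if the configuration splits into two parts supported by far-apart position sets, the probability that both occur equals the product of the two probabilities, up to a factor $1+\tO(1/n)$. Summing the single-tuple estimate over the $\Theta(n^\ell)$ choices of $\mathbf x$ already gives $\E(\OccSP)=\tTheta(n^{\ell-a})$.

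Next I would organize the $Y_{\mathbf x}$ into a weighted dependency graph on the index set of all $\ell$-tuples, assigning to a pair $\{\mathbf x,\mathbf x'\}$ the weight $1$ when $\mathbf x$ and $\mathbf x'$ share a position (they are then genuinely dependent, at times strongly so when they also share arcs), and the weight $\tO(1/n)$ when they are disjoint, the latter being exactly the content of the near-independence refinement; each variable $Y_{\mathbf x}$ carries the uniform magnitude $\E[Y_{\mathbf x}]=\tO(n^{-a})$ coming from the single-tuple estimate. Checking that these data satisfy the defining inequality of a weighted dependency graph --- a uniform bound on every joint moment of the $Y_{\mathbf x}$ by the appropriate product of weights --- is precisely where the probability estimates above are used, applied to the union of the position sets of the tuples involved, whose number of arcs is at most (and generically equal to) the sum of the individual numbers of arcs. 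Granting this, the cumulant bound attached to a weighted dependency graph expresses $\kappa_r(\OccSP)$, by multilinearity, as a sum over $r$-tuples of $\ell$-tuples of joint cumulants, each controlled by a maximal spanning tree of weights; a bookkeeping of positions against weights --- the first tuple contributing $\tO(n^{\ell-a})$, and each of the $r-1$ tree edges contributing a further factor $\tO(n^{\ell-1-a})$, whether it is a weight-$1$ edge (the two tuples then overlap, so the new one brings at most $\ell-1$ fresh positions) or a weight-$\tO(1/n)$ edge (at most $\ell$ fresh positions, offset by the extra $\tO(1/n)$) --- shows every such contribution is $\tO\bigl(n^{r(\ell-a)-(r-1)}\bigr)$, which is (ii).

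Finally, the variance lower bound (i) is obtained by a direct second-moment computation: the dominant part of $\Var(\OccSP)=\sum_{\mathbf x,\mathbf x'}\Cov(Y_{\mathbf x},Y_{\mathbf x'})$ comes from the diagonal together with pairs sharing a single position or a single arc, and the near-independence refinement isolates the leading term of each covariance; as always for such bounds, the point is to check that this leading term has a fixed sign and does not cancel, which here holds unconditionally --- in contrast with the multiset-permutation case, where the $\ell$-regularity hypothesis is precisely what rules out the analogous degeneracy. I expect the main obstacle to lie in the probabilistic core: proving the Bell-number-ratio estimates in a form uniform enough over all arc configurations --- including highly nested or clustered ones --- and over all intersection patterns of the position sets of several tuples, so as both to verify the weighted dependency graph condition and to pin down the leading term of the variance. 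Once that is in place, the cumulant bounds and the CLT follow a by-now standard route.
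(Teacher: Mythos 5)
Your high-level architecture (weighted dependency graph plus the cumulant criterion) is the same as the paper's, but the two steps you treat as routine are exactly where your plan breaks down. First, the defining inequality of a weighted dependency graph is a bound on joint \emph{cumulants} $\ka(Y_{\mathbf x},\,\mathbf x\in B)$ by $C_r\,\Psi(B)\,\MWST{\WDep[B]}$, not a bound on joint moments; for $r$ pairwise disjoint tuples the spanning-tree factor is of order $n^{-(r-1)}$, so you must show that the $r$-th order cumulant gains a factor $n^{1-r}$ (up to logs) over the product of expectations. Your ``near-independence refinement'' --- factorization of the probability of two far-apart configurations up to $1+\tO(1/n)$ --- only controls covariances; cumulants of order $r\ge3$ are alternating sums of products of joint moments, and to see the required cancellation down to $n^{1-r}$ you need a quasi-factorization property for \emph{all} sub-configurations with compatible error structure. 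The obstruction is that, unconditionally, the joint moments of arc indicators in a uniform set partition have no product form at all; this is precisely why the paper passes through Stam's urn model, proves quasi-factorization \emph{conditionally} on the number of urns $M$ (where the moments are genuinely multiplicative, \cref{eq:JointMoment_SP}), and then deconditions with Brillinger's law of total cumulance, which in turn requires the separate inputs that $M$ concentrates and that cumulants of rational functions of $M$ are each $\tO(n)$ (via Harper's representation of the block count as a sum of independent Bernoullis). None of this is recoverable from Bell-number-ratio estimates of the form $\tO(n^{-q})$ with multiplicative $1+\tO(1/n)$ errors, however uniform.

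Second, your variance lower bound is asserted rather than proved, and the assertion is doubtful as stated. In $\sum_{\mathbf x,\mathbf x'}\Cov(Y_{\mathbf x},Y_{\mathbf x'})$ the pairs sharing a position contribute $\tO(n^{2\ell-2a-1})$, but the $\Theta(n^{2\ell})$ \emph{disjoint} pairs, each with covariance of order $n^{-2a-1}$ (up to logs), contribute at the same order, and their common sign is not evident: that covariance is driven by the shared dependence of both indicators on the global block count, and isolating its sign essentially forces you back to conditioning on $M$. The paper sidesteps the cancellation issue entirely by the law of total variance: $\Var(\OccSP)\ge\Var\big[\esper(\OccSP|M)\big]$, a manifestly nonnegative quantity, and then shows this single term is already $\tTheta(n^{2\ell-2a-1})$ by an explicit (and rather delicate, \cref{sec:variance_estimate}) analysis of $\esper(\OccSP|M)$ as a polynomial-type function of $n$ and $M$, whose fluctuations come from those of $M^{t-a}$. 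So both the cumulant bound (ii) and the variance bound (i) in your outline need the conditioning-on-$M$ machinery you have omitted; as written, the proposal has genuine gaps at its two load-bearing points.
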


\begin{remark}
  Arc patterns are a particular case of the statistics called {\em patterns} in \cite{diaconis14average}.
  The latter are more general in the sense that we can require in an occurrence 
  that some $x_i$ is the first (or the last) element in its block
  or that $x_i$ and $x_{i+1}$ are consecutive
  for some $i$. Then the authors of \cite{diaconis14average} consider sums of the kind
  $\sum Q(x_1,\dots,x_\ell,n)$, where $Q$ is a polynomial, and the sum runs over occurrences of a given pattern $\pa$.
  This obviously generalizes the number of occurrences.

  The asymptotic normality of such statistics could also be investigated through weighted dependency graphs.
  Indeed, we think that with little extra effort (but heavier notation),
  we could include in the weighted dependency graph
  of \cref{ssec:StamWDG} some indicator variables 
  $F_i$ and $L_i$, indicating whether $i$ is the first or the last element in its block, respectively.
  Nevertheless, at this level of generality, it seems hard to find good variance bounds and estimates for the 
  parameters $R$ and $T_h$ of the weighted dependency graph.
\end{remark}

\subsection{Discussion on the proofs}
As said above, the proof relies on the theory of weighted dependency graphs.
There are two major difficulties in applying it.
\begin{itemize}
  \item The first one is to prove that the relevant random variables admit a suited weighted dependency graphs;
    this consists in bounding their joint cumulants.
    Thanks to the general theory of weighted dependency graphs, this can be reduced
    to bound the joint cumulants of simple indicator random variables.

    In the case of multiset permutations, since the corresponding joint moments are explicit,
    this is relatively easy.
    In the case of set partitions, we use a construction of a uniform random set partition
    through a urn model with a random number $M$ of urns, due to Stam \cite{stam1983RandomSetPartition}.
    We first bound the conditional cumulants with respect to $M$, and then use the law of total cumulance \cite{brillinger1969totalcumulance}.
    This needs deviation estimates on $M$ and a quite delicate analysis (\cref{sec:WDG_SetPartition}).
  \item The second difficulty is to find a lower bound on the variance.
    This is generally a delicate question, often left aside in the literature:
    in \cite{chatterjee08new}, the author writes
    ``to show that the bound [on Kolmogorov's distance] is useful,
    we require a lower bound on $\sigma^2$. We prefer
    to think of that as a separate problem.''
    
    In this paper, we provide such lower bounds on the variance for general patterns,
    both in multiset permutations and set partitions.
    In both cases, these bounds are based on the law of total variance
    (which is suited for finding lower bounds since it contains only nonnegative terms).
    For set partitions, we condition on the number $M$ of urns in the urn model
    and it turns out that one of the term in the law of total variance,
    namely the variance of the conditional expectation,
    is analyzable and already large enough to provide the needed lower bound
    (the analysis is however rather delicate, see \cref{sec:variance_estimate}).
    For multiset permutations, we condition on (one of) the position of the smallest element.
    This gives us a recursive inequality on the variance that can be analyzed to give the needed lower bound.
    This method has been recently introduced by Hofer \cite{hofer2018central},
    in the context of vincular patterns in (usual) permutations.
\end{itemize}

\subsection{Outline of the paper}
The paper is organized as follows. \cref{sec:WDG} introduces the necessary background on weighted dependency graphs,
in particular the normality criterion that we use.
\cref{part:multiset_perm,part:set_partitions} contain the proofs
of our main results
on multiset permutations and set partitions, respectively.
These sections are independent from each other.
\cref{sec:WDG_SetPartition,sec:variance_estimate} are devoted to technical proofs 
of the set partition section.
\bigskip

\section{Weighted dependency graphs}
\label{sec:WDG}
\subsection{Motivation and definition}
We first present the definition of usual dependency graphs
and explain informally why we need a weighted version of it.
\begin{definition}[Janson, \cite{JansonDependencyGraphs}]
    A graph $\Dep$ is a dependency graph for the family $\{Y_\a,\a \in A\}$
    of random variables
    if the two following conditions are satisfied:
    \begin{enumerate}
        \item the vertex set of $\Dep$ is $A$;
        \item if $A_1$ and $A_2$ are subsets of $A$ such
          that no edge connects a vertex of $A_1$ to one of $A_2$ in $\Dep$,
            then  $\{Y_\a,\a \in A_1\}$ and $\{Y_\a,\a \in A_2\}$ are independent.
    \end{enumerate}
\end{definition}
Roughly, a dependency graph encodes the dependency relations 
between the variables $Y_\a$:
variables not related by edges must be independent.

Consider now a sequence $S_n=\sum_{\a \in A_n} Y_{\a,n}$ of sum of random variables
and, for each $n$, a dependency graph $\Dep_n$ for the family $\{Y_{\a,n},\a \in A_n\}$.
If these dependency graphs are sparse enough, we might expect that 
$S_n$ behaves as a sum of independent variables and
is, under mild condition, asymptotically normal.
A precise normality criterion which formalizes this intuition, has been given by Janson
\cite[Theorem 2]{JansonDependencyGraphs} -- see also the work of Mikhailov \cite{MikhailovDependencyGraphs},
which has a higher range of applications.
\medskip

As explained in the introduction, dependency graphs are not suited to study patterns
in multiset permutations or set partitions,
since occurrences of a given pattern in disjoint sets of position
are in general dependent events.
Yet, such events are weakly correlated, 
and we will be able to use a weighted variant of dependency graphs,
that we now present.
The rest of this section follows \cite{feray18dependency}.
\bigskip

First we need something that quantifies the dependency between random variables.
To this purpose, we use mixed cumulants.
Cumulants have a long history and many different names:
they are referred to as Ursell functions (after Ursell \cite{ursell_1927})
or truncated/connected correlation functions in the physics literature,
as {\em ``déviation d'indépendence''} in a note of Schutzengberger \cite{Schutzenberger1947},
or as semi-invariants, for instance, in \cite{LeonovShiryaevCumulants} and \cite{JansonDependencyGraphs}.
We refer to \cite[Chapter 6]{JansonLuczakRucinski2000} for a modern presentation
of mixed cumulants and applications to random graph theory.

The (mixed) \defn{cumulant} of a family of random variables $X_1,X_2,\ldots,X_r$ 
defined on the same probability space and having finite moments
is defined as
\begin{equation}
  \kappa(X_1,X_2,\ldots,X_r):=\sum_{\pi\in\ptn([r])}\mu(\pi,{[r]})\prod_{B\in\pi}\E\left(\prod_{i\in B}X_i\right),
  \label{eq:def_cumulants}
\end{equation}
where $\ptn([r])$ is the lattice of set partitions of $[r]$ and $\mu$ is its M\"obius function.
If $X_i=X$ for all $i$ we abbreviate $\kappa(X_1,X_2,\ldots,X_r)$ as $\kappa_r(X)$.
Key properties of cumulants are the following:
\begin{enumerate}
  \item \label{item:ind_CumZero} If $\{X_1,X_2,\ldots,X_r\}$ can be written
    as a disjoint union of two independent non-empty sets of random variables then $\kappa(X_1,X_2,\ldots,X_r)=0$.
 \item a sequence $Y_n$ of random variables converges in distribution to a standard normal variable
   as soon as $\kappa_r(Y_n) \to \bm{1}[r=2]$ for all $r \ge 1$.
\end{enumerate}
If $\{Y_\a,\a \in A\}$ is a family of random variables with dependency graph $\Dep$,
then property \ref{item:ind_CumZero} above implies that, for indices $\a_1,\cdots,\a_r$
such that the induced graph $G[\a_1,\cdots,\a_r]$ is disconnected, we have
\[\kappa(Y_{\a_1},\ldots,Y_{\a_r}) =0.\]
For weighted dependency graphs, the idea behind the definition
is that the smaller the edge weights in the induced subgraph $G[\a_1,\cdots,\a_r]$ are,
the smaller the corresponding mixed cumulants should be.

In the sequel, a weighted graph is a graph with weights on its {\em edges}, belonging to $(0,1]$. 
Non-edges can be interpreted as edges of weight $0$, so that a weighted graph can be equivalently seen
as an assignment of weights in $[0,1]$ to the edges of the complete graph.
All our definitions are compatible with this convention.

For a weighted graph $H$, we define  $\MWST{H}$
to be the \defn{maximal weight of a spanning tree} of $H$,
the weight of a spanning tree being the product of the weights of its edges
(if $H$ is disconnected, there is no spanning tree
and as a consequence of the above convention, we have $\MWST{\WDep[B]}=0$).
The following definition was proposed in \cite{feray18dependency}.
\begin{definition}
\label{Def:WDG}
Let $\bC=(C_1,C_2,\cdots)$ be a sequence of positive real numbers.
Let $\Psi$ be a real-valued function on multisets of elements of $A$.

A weighted graph $\WDep$ is a $(\Psi,\bC)$ weighted dependency graph
for a family $\{Y_\a,\a \in A\}$ of random variables defined 
on the same probability space and having finite moments
if, 
for any multiset \hbox{$B=\{\a_1,\ldots,\a_r\}$} of elements of $A$,
one has
\begin{equation}
    \bigg| \ka\big( Y_\a ; \a \in B \big) \bigg| \le
    C_r \, \Psi(B) \, \MWST{\WDep[B]}. 
    \label{EqFundamental}
\end{equation}
\end{definition}
In examples of weighted dependency graphs, 
$\Psi$ and $\bC$ are simple or universal quantities,
so that the meaningful term is $\MWST{\WDep[B]}$.
Note that the smaller the weight on edges are,
the smaller $\MWST{\WDep[B]}$ is, which is consistent with intuition.

\subsection{A criterion for asymptotic normality}
Let $\WDep$ be a $(\Psi,\bC)$ weighted dependency graph 
for a family of variables $\{Y_\a, \a \in A\}$.
Let $I$ and $J$ be subsets of $A$.
If $I$ and $J$ have an element in common, we set $W(I,J)=1$.
Otherwise, we define
$W(I,J)$ as the maximal weight of an edge in $\WDep$
connecting an element of $I$ to an element of $J$
(if there is no such edge $W(I,J)=0$, which is consistent with the fact
that a nonedge can be replaced by an edge of weight $0$).

Finally, we introduce the following parameters ($h$ being a positive integer):
\begin{align}
    \R &= \sum_{\a \in A} \Psi(\{\a\});
    \label{EqDefR}\\
    \label{EqDefT}
    T_h &= \max_{\substack{\a_1,\ldots,\a_{h} \in A}} \left[ \sum_{\beta \in A} 
    W(\{\beta\},\{\a_1,\cdots,\a_h\}) 
    \frac{\Psi\big(\{\a_1,\cdots,\a_{h},\beta\}\big)}{\Psi\big(\{\a_1,\cdots,\a_{h} \}\big)}   \right].
\end{align}
Admittedly, the definition of $T_h$ is somewhat involved,
but these parameters turn out to be easy to estimate in practice.
In the particular case where $\Psi$ is the constant function equal to $1$,
$R$ is the number of variables, and each $T_h$
is within a factor $h$ of the maximal weighted degree of the graph
-- see \cite[Remark 4.9]{feray18dependency}.

Using these parameters, the following asymptotic normality criteria was given in 
\cite[Theorem 4.11]{feray18dependency}
\begin{theorem}
    \label{Thm:WDG}
    Suppose that, for each $n$, $\{Y_{n,i}, 1\le i \le N_n\}$ is a family of 
    random variables with finite moments defined on the same probability space.
    For each $n$, let 
    $\Psi_n$ a function on multisets of elements of $[N_n]$.
    We also fix a sequence $\bC=(C_r)_{r \ge 1}$, {\em not depending} on $n$.

    Assume that, for each $n$, one has a $(\Psi_n,\bC)$ weighted dependency graph $\WDep_n$\,
    for the family
    \hbox{$\{Y_{n,i}, 1\le i \le N_n\}$} and define the corresponding quantities
    $\R_n$, $T_{1,n}$, $T_{2,n}$, \ldots, by \cref{EqDefT,EqDefR}.
    
    Let $X_n = \sum_{i=1}^{N_n} Y_{n,i}$ and $\si_n^2= \Var(X_n)$.
    
    Assume that there exist numbers $\gamma_h$ and $Q_n$ and an integer $s\ge 3$ such that
    \begin{align}
        T_{h,n} &\le \gamma_h Q_n ;
        \label{EqBoundUnifT} \\
        \left(\tfrac{R_n}{Q_n}\right)^{1/s}\, \tfrac{Q_n}{\sigma_n} &\to 0\text{ as }n \to \infty.
        \label{EqHypoMainThm}
    \end{align}
    Then $X_n$ is asymptotically normal.
\end{theorem}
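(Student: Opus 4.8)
I would prove this by the method of cumulants (semi-invariants), which is exactly the tool the defining inequality \eqref{EqFundamental} of a weighted dependency graph is built to feed. By the second basic property of cumulants recalled in \cref{sec:WDG}, it suffices to show $\kappa_r(X_n/\sigma_n)\to\mathbf{1}[r=2]$ for every fixed $r$. Replacing each $Y_{n,i}$ by $Y_{n,i}-\E Y_{n,i}$ leaves $\WDep_n$, $\Psi_n$ (hence $R_n$ and all $T_{h,n}$), as well as $\sigma_n$ and $X_n-\E X_n$, unchanged, and only kills the first cumulants; so I may assume the $Y_{n,i}$ centred, whence $\kappa_1(X_n)=0$ and automatically $\kappa_1(X_n/\sigma_n)=0$, $\kappa_2(X_n/\sigma_n)=1$. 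Everything then reduces to showing $\kappa_r(X_n)/\sigma_n^r\to 0$ for $r\ge 3$.

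\textbf{The cumulant bound (core step).} By multilinearity, $\kappa_r(X_n)=\sum_{i_1,\dots,i_r}\kappa(Y_{n,i_1},\dots,Y_{n,i_r})$, and \eqref{EqFundamental} bounds the summand indexed by $(i_1,\dots,i_r)$ by $C_r\,\Psi_n(\{i_1,\dots,i_r\})\,\MWST{\WDep_n[\{i_1,\dots,i_r\}]}$. Bounding the maximal weight of a spanning tree by the sum over all spanning trees of the products of their edge weights (with the convention that coinciding indices are joined by an edge of weight $1$) and exchanging the two summations, one is left, for each of the at most $r^{r-2}$ labelled trees $T$ on $[r]$, with $\sum_{i_1,\dots,i_r}\Psi_n(\{i_\bullet\})\prod_{(a,b)\in T}W(\{i_a\},\{i_b\})$. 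Rooting $T$ and summing out leaves one at a time, when $h$ vertices remain and I delete a leaf attached to one of them, say $\alpha$, the sum over the leaf's index $\beta$ equals $\sum_\beta W(\{\beta\},\{\alpha\})\,\Psi_n(\{i_{\alpha_1},\dots,i_{\alpha_h},\beta\})/\Psi_n(\{i_{\alpha_1},\dots,i_{\alpha_h}\})$, which is at most $T_{h,n}$ because $W(\{\beta\},\{\alpha\})\le W(\{\beta\},\{i_{\alpha_1},\dots,i_{\alpha_h}\})$ and by the very definition \eqref{EqDefT} of $T_{h,n}$. Peeling for $h=r-1,r-2,\dots,1$ and finishing with $\sum_i\Psi_n(\{i\})=R_n$ bounds this sum by $R_n\prod_{h=1}^{r-1}T_{h,n}$; invoking \eqref{EqBoundUnifT},
\[
|\kappa_r(X_n)|\;\le\;C_r\,r^{r-2}\,R_n\prod_{h=1}^{r-1}T_{h,n}\;\le\;D_r\,R_n\,Q_n^{\,r-1},\qquad D_r:=C_r\,r^{r-2}\prod_{h=1}^{r-1}\gamma_h .
\]
The case $r=2$ gives in particular the variance estimate $\sigma_n^2=|\kappa_2(X_n)|\le D_2\,R_nQ_n$, i.e. $R_n/Q_n\ge(\sigma_n/Q_n)^2/D_2$.

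\textbf{Feeding in the hypothesis, and the main obstacle.} Dividing the displayed bound by $\sigma_n^r$ gives $|\kappa_r(X_n/\sigma_n)|\le D_r\,(R_n/Q_n)(Q_n/\sigma_n)^r$. First, the variance estimate and \eqref{EqHypoMainThm} force $Q_n/\sigma_n\to 0$, since $(Q_n/\sigma_n)^{1-2/s}\le D_2^{1/s}(R_n/Q_n)^{1/s}(Q_n/\sigma_n)\to 0$ and $1-2/s\ge 1/3>0$. Then for $r=s$ one gets $|\kappa_s(X_n/\sigma_n)|\le D_s\big[(R_n/Q_n)^{1/s}(Q_n/\sigma_n)\big]^s\to 0$, and for $r>s$, $|\kappa_r(X_n/\sigma_n)|\le D_r\big[(R_n/Q_n)^{1/s}(Q_n/\sigma_n)\big]^s(Q_n/\sigma_n)^{r-s}\to 0$. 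The genuinely delicate range is $3\le r<s$ (which occurs only when $s\ge 4$): there the crude bound $|\kappa_r(X_n)|\le D_rR_nQ_n^{r-1}$ is \emph{not} by itself sufficient — morally because no a priori \emph{lower} bound on $\sigma_n^2$ is available, so the inequalities above only pin $R_n/Q_n$ between $(\sigma_n/Q_n)^2$ and $(\sigma_n/Q_n)^s$ up to constants — and one must bring the variance estimate back in, keeping the finer form $R_n\prod_h T_{h,n}$ of the bound and re-balancing the parameters (or, equivalently, first checking that the $\kappa_r(X_n/\sigma_n)$ stay bounded and then invoking, on tight moment-convergent subsequences, Marcinkiewicz's rigidity — a law with vanishing cumulants from some order on is Gaussian — together with the convergence already obtained for all $r\ge s$). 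I expect this bookkeeping for the low orders to be the main obstacle; the combinatorial cumulant bound above, once the weighted dependency graph is granted, is essentially mechanical. Granting it, $\kappa_r(X_n/\sigma_n)\to\mathbf{1}[r=2]$ for all $r$, so $(X_n-\E X_n)/\sigma_n$ converges in distribution to a standard Gaussian, i.e. $X_n$ is asymptotically normal.
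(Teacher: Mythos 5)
You should first note that the paper does not prove \cref{Thm:WDG} at all: it quotes it from \cite[Theorem 4.11]{feray18dependency}, remarking only that ``this theorem is proved by bounding cumulants of $X_n$''. Your machinery is exactly that cumulant machinery, and the bulk of it is correct: expanding $\kappa_r(X_n)$ by multilinearity, bounding $\MWST{\WDep_n[B]}$ by the sum over the at most $r^{r-2}$ spanning trees, and peeling leaves against the definition \eqref{EqDefT} does give $|\kappa_r(X_n)|\le C_r\,r^{r-2}\,R_n\prod_{h=1}^{r-1}T_{h,n}\le D_r R_nQ_n^{r-1}$; the case $r=2$ recovers \cref{prop:WDG_UpperBound_Var}, your deduction that $Q_n/\sigma_n\to0$ is correct, and so is the treatment of all orders $r\ge s$.

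The genuine gap is the one you flag yourself, the orders $3\le r<s$ (so every $s\ge4$), and neither of your suggested fixes closes it. ``Re-balancing with the finer form $R_n\prod_h T_{h,n}$'' gains nothing: the hypotheses see the $T_{h,n}$ only through \eqref{EqBoundUnifT}, i.e.\ through $Q_n$, so the finer form carries no extra information. Moreover the facts actually at your disposal --- $u_nv_n^s\to0$, $u_nv_n^2$ bounded below (from the variance bound), $v_n\to0$, where $u_n=R_n/Q_n$ and $v_n=Q_n/\sigma_n$ --- are compatible with $u_nv_n^r\to\infty$ for $3\le r<s$: take $u_n=n^{7/2}$, $v_n=n^{-1}$, $s=4$, so that the bound for $r=3$ is $D_3 n^{1/2}$. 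Hence the weighted-dependency-graph estimate \eqref{EqFundamental} simply cannot control the low normalized cumulants, and no bookkeeping with $R_n,Q_n,\sigma_n$ will. Your Marcinkiewicz fallback is the right idea, but as stated it presupposes exactly what is missing, namely that $\kappa_r(X_n/\sigma_n)$ (equivalently the moments) stay bounded for $3\le r<s$; this does not follow from \eqref{EqFundamental} but from intrinsic moment-positivity constraints, e.g.\ $\kappa_3(Z)^2\le\kappa_2(Z)\bigl(\kappa_4(Z)+3\kappa_2(Z)^2\bigr)$ by Cauchy--Schwarz and its higher analogues, which combined with $\kappa_2=1$ and the already-established vanishing of all normalized cumulants of order $\ge s$ bound the intermediate orders; only then do tightness, uniform integrability of all powers along subsequences, and Marcinkiewicz-type rigidity force every subsequential limit to be standard Gaussian. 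In short, what you still need to prove (or quote) is the lemma that mean $0$, variance $1$ and $\kappa_r(Z_n)\to0$ for all $r\ge s$ already imply asymptotic normality; some device of this kind is unavoidable in any proof of the criterion with general $s$, and without it your argument is a complete proof only in the case $s=3$.
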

Note that establishing \eqref{EqHypoMainThm} requires a lower bound on $\sigma_n$,
which is often nontrivial to obtain.
This theorem is proved by bounding cumulants of $X_n$.
In particular, we get an upper bound on the variance.
\begin{proposition}\cite[Lemma 4.10]{feray18dependency}
  \label{prop:WDG_UpperBound_Var}
  We use the notation of  \cref{Thm:WDG}
  (but do not assume \eqref{EqBoundUnifT} and \eqref{EqHypoMainThm}).
  Then $\Var(X_n) \le 2 C_2 \R_n T_{1,n}$.
\end{proposition}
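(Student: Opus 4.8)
The plan is to expand the variance as a double sum of pairwise cumulants and bound each term with the defining inequality \eqref{EqFundamental} of a weighted dependency graph, then recognize the resulting sums as instances of $R_n$ and $T_{1,n}$. First I would write, using bilinearity of the covariance,
\[
  \sigma_n^2=\Var(X_n)=\sum_{i=1}^{N_n}\Var(Y_{n,i})+\sum_{\substack{1\le i,j\le N_n\\ i\neq j}}\Cov(Y_{n,i},Y_{n,j}),
\]
and treat the two sums separately; the factor $2$ in the statement will come precisely from bounding each of them by $C_2 R_n T_{1,n}$.

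For the off-diagonal sum, fix $i\neq j$. The multiset $B=\{i,j\}$ has two elements, so $\MWST{\WDep_n[B]}$ is simply the weight of the edge joining $i$ and $j$, i.e. $W(\{i\},\{j\})$ (with the usual convention that this is $0$ when there is no such edge). Hence \eqref{EqFundamental} with $r=2$ gives $|\Cov(Y_{n,i},Y_{n,j})|=|\kappa(Y_{n,i},Y_{n,j})|\le C_2\,\Psi_n(\{i,j\})\,W(\{i\},\{j\})$. Taking $\alpha_1=i$ in the definition \eqref{EqDefT} of $T_{1,n}$ and keeping only the terms with $\beta\neq i$, we get $\sum_{j\neq i}W(\{j\},\{i\})\,\Psi_n(\{i,j\})\le T_{1,n}\,\Psi_n(\{i\})$; summing over $i$ and using \eqref{EqDefR} yields $\sum_{i\neq j}|\Cov(Y_{n,i},Y_{n,j})|\le C_2\,R_n\,T_{1,n}$. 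For the diagonal sum, I would apply \eqref{EqFundamental} to the multiset $B=\{i,i\}$, whose induced graph is a single vertex, so $\MWST{\WDep_n[B]}=1$ and $\Var(Y_{n,i})=\kappa_2(Y_{n,i})\le C_2\,\Psi_n(\{i,i\})$. Now the ratio $\Psi_n(\{i,i\})/\Psi_n(\{i\})$ is exactly the $\beta=\alpha_1=i$ summand (which has weight $W(\{i\},\{i\})=1$) in the maximum defining $T_{1,n}$, hence is at most $T_{1,n}$; so $\Var(Y_{n,i})\le C_2\,T_{1,n}\,\Psi_n(\{i\})$, and summing over $i$ gives $\sum_i\Var(Y_{n,i})\le C_2\,R_n\,T_{1,n}$. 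Adding the two bounds gives $\sigma_n^2\le 2C_2 R_n T_{1,n}$.

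I do not expect a genuine obstacle here: the argument is a direct manipulation of the two defining quantities. The only points deserving a little care are the conventions that make the bound uniform over all pairs — namely that $\MWST{\WDep_n[\{i,j\}]}\le W(\{i\},\{j\})$ for $i\neq j$ and that $\MWST{\WDep_n[\{i,i\}]}=1$ — and the observation, used for the diagonal terms, that a single summand of the maximum in \eqref{EqDefT} already controls $\Psi_n(\{i,i\})/\Psi_n(\{i\})$.
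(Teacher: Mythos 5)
Your proof is correct. Note that the paper itself does not prove this proposition: it imports it from the cited reference, where it is the $r=2$ specialization of a general bound on the cumulants $\kappa_r(X_n)$, so there is no in-paper argument to compare against, and your direct derivation from the $r=2$ case of \eqref{EqFundamental} is exactly the natural one. Two small remarks. First, your term-dropping steps (discarding the $\beta=i$ summand in the off-diagonal estimate, and keeping only that summand in the diagonal estimate) implicitly use that every summand in \eqref{EqDefT} is nonnegative, i.e.\ that $\Psi_n\ge 0$; this is the standing convention of the framework (and is forced by \eqref{EqFundamental} wherever the spanning-tree weight is nonzero), but it deserves a word. Second, you could avoid the diagonal/off-diagonal split altogether: since $W(\{i\},\{i\})=1$, the ratio $\Psi_n(\{i,i\})/\Psi_n(\{i\})$ is itself one of the summands in the sum defining $T_{1,n}$ with $\alpha_1=i$, so bounding $\sum_{j}|\kappa(Y_{n,i},Y_{n,j})|\le C_2\,T_{1,n}\,\Psi_n(\{i\})$ for each $i$ in one stroke yields the sharper bound $\Var(X_n)\le C_2 R_n T_{1,n}$; the factor $2$ in the cited statement comes from the combinatorial constant in the general $r$-th cumulant bound, and your argument recovers (indeed improves on) it.
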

\begin{remark}
  \label{Rmk:WDGNonCstC}
  The above normality criterion can be adapted to sequences of random variables
  with a $(\Psi_n,\bC_n)$ weighted dependency graph $\WDep_n$,
  where $\bC_n$ depends on $n$.
  In such situation, we write $\bC_n=(C_{r,n})_{r \ge 1}$.
  The condition \eqref{EqHypoMainThm} should then be replaced by
  the fact that, for some $s \ge 3$ and all $r \ge 1$,
  the quantity $\left(\tfrac{R_n}{Q_n}\right)^{1/s}\, \tfrac{Q_n}{\sigma_n}$
  tends to $0$ faster than any power of $C_{r,n}$.
  The proof of this extension is a straightforward adaptation of
  the proof of \cite[Theorem 4.11]{feray18dependency}.
  We will use it in \cref{part:set_partitions} below.
\end{remark}

\subsection{Power of weighted dependency graphs}
An important property of weighted dependency graphs is the following stability property.
Consider a family of random variables $\{Y_\a,\a \in A\}$ 
with a $(\Psi,\bC)$ weighted dependency graph $\WDep$
and fix some integer $d \ge 1$.
We are interested in monomials $\bm Y_I:=\prod_{\a_i \in I} Y_{\a_i}$ of degree at most $d$,
i.e. $I$ is a multiset of elements of $A$ of size at most $d$ (counting repetitions),
which we will denote as $I \in \MSet_{\le d}(A)$.
This new family of random variables $\{\bm Y_I, I \in \MSet_{\le d}(A)\}$
admits a natural weighted dependency graph inherited from that of $\{Y_\a,\a \in A\}$.

To state this formally, we need to introduce some more notation.
The $d$-th power $\WDep^d$ of $\WDep$ is the weighted graph with vertex set $\MSet_{\le d}(A)$ 
and having an edge of weight $W(I,J)$ between every pair of vertices $(I,J)$ in $\MSet_{\le d}(A)$ 
(here, we describe the weighted graph $\WDep^d$ as a complete graph with possibly zero edge-weights).

Finally, a function $\Psi$ defined on multiset of $A$
is naturally seen as a function on multiset of multisets of $A$ by setting:
\begin{equation}
     \bm{\Psi}(\{I_1,\cdots,I_r\}) = \Psi(I_1 \uplus \cdots \uplus I_r)
\end{equation}
\begin{proposition}\cite[Proposition 5.11]{feray18dependency}
  \label{prop:WDG_Products}
  Consider random variables $\{Y_\a,\a \in A\}$
  with a $(\Psi,\bC)$ weighted dependency graph $\WDep$
  and fix some integer $d \ge 1$.
  Then, with the above notation,
  $\WDep^d$ is a $(\bm{\Psi},\bD_d)$ weighted dependency graph for the family
  $\{Y_I,\, I \in \MSet_{\le d}(A)\}$,
  where the constants $\bD_d=(D_{d,r})_{r \ge 1}$ depend only on
  $d$, $r$ and $\bC$.
\end{proposition}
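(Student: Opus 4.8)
The plan is to derive the cumulant inequality~\eqref{EqFundamental} for the monomials $\bm Y_I$ directly from the one for the variables $Y_\a$, by means of the classical expansion of a joint cumulant of products as a sum over ``connected'' set partitions (see e.g.~\cite{LeonovShiryaevCumulants}). Fix $r \ge 1$ and multisets $I_1,\dots,I_r \in \MSet_{\le d}(A)$, and set $V := I_1 \uplus \cdots \uplus I_r$, a multiset of size at most $dr$ whose elements we regard as distinct labelled copies; let $\rho = \{I_1,\dots,I_r\}$ be the associated partition of $V$. The expansion reads
\[
  \ka\big(\bm Y_{I_1},\dots,\bm Y_{I_r}\big) \;=\; \sum_{\sigma}\, \prod_{B \in \sigma} \ka\big(Y_\a ;\ \a \in B\big),
\]
the sum running over set partitions $\sigma$ of $V$ such that $\sigma \vee \rho$ is the one-block partition $\{V\}$ (join in the partition lattice), equivalently those $\sigma$ for which the multigraph on $\{I_1,\dots,I_r\}$ that joins $I_j$ to $I_k$ whenever a block of $\sigma$ meets both is connected. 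For $r = 1$ this is the moment-to-cumulant formula and the claim reduces to $|\E(\bm Y_{I_1})| \le D_{d,1}\,\Psi(I_1)$, handled along the same lines, so assume $r \ge 2$. I would bound each inner factor by $C_{|B|}\,\Psi(B)\,\MWST{\WDep[B]}$ using~\eqref{EqFundamental}, and then control the three resulting contributions separately.

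Two of them are immediate. The number of partitions $\sigma$ of $V$ is bounded by the Bell number of order $dr$, and each product $\prod_{B \in \sigma} C_{|B|}$ has at most $dr$ factors, all among $C_1,\dots,C_{dr}$; both quantities are bounded in terms of $d$, $r$ and $\bC$ alone, which is the origin of $D_{d,r}$. Next, since $\biguplus_{B \in \sigma} B = V = I_1 \uplus \cdots \uplus I_r$ and $\Psi$ is multiplicative under disjoint unions, $\prod_{B \in \sigma} \Psi(B) = \Psi(V) = \bm\Psi(\rho)$, exactly the $\bm\Psi$-factor demanded by~\eqref{EqFundamental} for $\WDep^d$. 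The crux is thus the spanning-tree comparison
\[
  \prod_{B \in \sigma} \MWST{\WDep[B]} \;\le\; \MWST{\WDep^d[\rho]},
\]
valid for every admissible $\sigma$. If some $\WDep[B]$ is disconnected the left-hand side is $0$; otherwise choose a maximum-weight spanning tree $t_B$ of $\WDep[B]$ for each block and form $F := \bigcup_{B \in \sigma} t_B$. This is a spanning forest of $V$ whose components are precisely the blocks of $\sigma$ and whose weight (the product of its edge weights) equals $\prod_{B \in \sigma} \MWST{\WDep[B]}$. Since $F$ connects all copies within each block of $\sigma$, contracting $V$ along $\rho$ --- identifying the copies lying in a common $I_j$ --- turns $F$ into a multigraph on $\{I_1,\dots,I_r\}$ which is connected precisely because $\sigma \vee \rho = \{V\}$; a spanning tree of that multigraph selects $r - 1$ edges of $F$, the $\nu$-th one joining a copy in some $I_{j_\nu}$ to a copy in some $I_{k_\nu}$ with $j_\nu \ne k_\nu$. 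In $\WDep^d$ these give a spanning tree of $\rho$ with edges $\{I_{j_\nu}, I_{k_\nu}\}$, and each such edge has weight $W(I_{j_\nu}, I_{k_\nu})$ at least the weight of the corresponding edge of $F$ (by definition of $W$ as the maximal weight of an edge between the two sets, together with $W(I,J) = 1$ when $I \cap J \ne \emptyset$); as all weights lie in $(0,1]$, dropping the edges of $F$ not retained only increases the product. Hence $\MWST{\WDep^d[\rho]}$ is at least the weight of $F$, which is the claimed inequality.

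Combining the three bounds gives $\big|\ka(\bm Y_{I_1},\dots,\bm Y_{I_r})\big| \le D_{d,r}\,\bm\Psi(\rho)\,\MWST{\WDep^d[\rho]}$ with $D_{d,r}$ depending only on $d$, $r$ and $\bC$; this is exactly~\eqref{EqFundamental} for $(\bm\Psi, \bD_d)$ and the graph $\WDep^d$, which proves the proposition. The one genuinely non-routine step is the spanning-tree comparison: it is the only point at which the ``connectedness'' restriction in the Leonov--Shiryaev expansion is used, and it calls for a little care with the multiset bookkeeping --- distinct copies of a repeated element, pairs $I_j, I_k$ that share an element, and the trivial case $r = 1$, where both sides of the inequality equal $1$.
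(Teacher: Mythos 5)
The paper itself gives no proof of this proposition: it is imported verbatim from \cite[Prop.~5.11]{feray18dependency}, so your attempt can only be compared with the proof in that reference. Your overall route --- expanding $\ka(\bm Y_{I_1},\dots,\bm Y_{I_r})$ as a sum over set partitions $\sigma$ of $V=I_1\uplus\cdots\uplus I_r$ with $\sigma\vee\rho=\{V\}$, bounding each block factor by \eqref{EqFundamental}, and comparing $\prod_{B\in\sigma}\MWST{\WDep[B]}$ with $\MWST{\WDep^d[\rho]}$ via the contraction/spanning-forest argument --- is essentially the one used there, and the spanning-tree comparison itself (including the convention $W(I,J)=1$ when $I\cap J\neq\emptyset$ and the fact that discarding extra forest edges only increases the product because all weights lie in $(0,1]$) is carried out correctly.

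There is, however, one genuine gap: the step ``$\prod_{B\in\sigma}\Psi(B)=\Psi(V)=\bm\Psi(\rho)$ since $\Psi$ is multiplicative under disjoint unions''. In \cref{Def:WDG}, $\Psi$ is an \emph{arbitrary} function on multisets; no multiplicativity is assumed, and the claimed equality is in fact false for both $\Psi$'s used in this paper: in \cref{thm:WDG_MSet}, $\Psi$ is a product over \emph{distinct} variables, and in \cref{thm:WDG_SetPart}, $\Psi(B)=n^{-\#(B)}$, so if two blocks of $\sigma$ contain copies of the same variable then $\prod_{B\in\sigma}\Psi(B)$ carries repeated factors and is strictly smaller than $\Psi(V)$. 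What your argument really needs is only the inequality $\prod_{B\in\sigma}\Psi(B)\le K\,\Psi(I_1\uplus\cdots\uplus I_r)$ (a super-multiplicativity property, with $K$ allowed to depend on $d,r$), and this does hold for the two $\Psi$'s above --- the duplicated factors are probabilities at most $1$, respectively $\#$ is subadditive --- but for a completely arbitrary $\Psi$ nothing relates $\prod_B\Psi(B)$ to $\Psi(V)$: one can have $\Psi(V)=0$ while $\Psi(B)>0$ on proper sub-multisets, in which case the right-hand side of the desired bound vanishes although the Leonov--Shiryaev sum need not, so some hypothesis of this kind is genuinely required (the statement as quoted here suppresses it). The same issue already appears in your $r=1$ case, where the moment--cumulant expansion of $\esper(\bm Y_{I_1})$ needs the identical inequality. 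To repair the write-up, either add the super-multiplicativity assumption on $\Psi$ and verify it for the concrete $\Psi$'s of \cref{thm:WDG_MSet,thm:WDG_SetPart}, or prove the inequality $\prod_{B\in\sigma}\Psi(B)\le\Psi(V)$ directly in those two cases; with that amendment the remainder of your argument is sound.
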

In applications, the above proposition is used as follows.
We first find a weighted dependency graphs for some simple family of random variables:
typically indicators of basic events, such as $\sigma(i)=j$ for (multiset) permutations,
or the presence of an arc between given points $i$ and $j$ in set partitions.
The above theorem gives us automatically a weighted dependency graph for more complicated random variables,
such as indicators of having a fixed pattern in some given set of positions.
Then the normality criterion (\cref{Thm:WDG}) gives a central limit theorem for the number of occurrences
of this fixed pattern.

\section{Permutation patterns in multiset permutations}
In this section, we prove \cref{Thm:CLT_Multiset}.
We refer to \cref{ssec:ThmMultiset} for notation.

\label{part:multiset_perm}
\subsection{The weighted dependency graph}
We consider the following random variable $X_i^j$ on $S_M$, defined by:
\[X_i^j(\word)=\begin{cases}
            1\text{ if }\word_i=j;\\
            0\text{ otherwise}.
           \end{cases}
           \]
Let us also denote $A^M:=\{X_i^j:1\leq i\leq n, j\in M\}$.
The purpose of this subsection is to prove the following.
\begin{theorem}\label{thm:WDG_MSet}
 Consider the weighted complete graph $\WDep^M$
 on vertex-set $A^M$ with weights
 \[w(X_{i}^{j},X_{i'}^{j'})=\begin{cases}
                                   1\text{ if }i=i'\\
                                   1/a_j\text{ if }i\neq i'\text{ and }j=j'\\
                                   1/n\text{ otherwise}.
                                  \end{cases}
 \]
 Then $\WDep^M$ is a $(\Psi,\const)$ weighted dependency graph
 for the family $A^M$,
 where $\Psi$ is the function on multisets $B$ of elements in $A^M$ defined by
 \[\Psi(B)=\prod_{X_i^j\in B\text{ distinct}} \esper[X_i^j]
 =\prod_{X_i^j\in B\text{ distinct}}\frac{a_j}{n}\]
 and $\const=(C_1,C_2,\ldots)$ is a universal sequence of constants not depending on $M$.
\end{theorem}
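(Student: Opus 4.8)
\emph{Overall strategy.} By definition of a $(\Psi,\const)$ weighted dependency graph, we must bound, for every multiset $B=\{X_{i_1}^{j_1},\dots,X_{i_r}^{j_r}\}$ of elements of $A^M$, the mixed cumulant $|\ka(X_{i_1}^{j_1},\dots,X_{i_r}^{j_r})|$ by $C_r\,\Psi(B)\,\MWST{\WDep^M[B]}$. Since a repeated factor $(X_i^j)^k$ equals $X_i^j$ (the variables are indicators), we may and do assume the $X_i^j$ in $B$ are pairwise distinct, i.e. $B$ is a genuine set; repetitions only improve the bound because $\Psi$ depends on distinct elements only and $\MWST{\cdot}$ is unchanged. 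So fix distinct pairs $(i_1,j_1),\dots,(i_r,j_r)$. The plan is: (1) give an exact product formula for the joint moments $\esper\big[\prod_{t\in S}X_{i_t}^{j_t}\big]$ over any subset $S\subseteq[r]$, which exists because the uniform distribution on $S_M$ is explicit; (2) feed this into the M\"obius-inversion formula \eqref{eq:def_cumulants} for the cumulant; (3) identify the "diagonal" main term and show it cancels, leaving a remainder governed by the correlations, which are exactly of size $1/a_j$ (same letter, different position) or $1/n$ (different letter); (4) match the resulting bound against $\MWST{\WDep^M[B]}$.

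\emph{Step 1: joint moments.} A product $\prod_{t\in S}X_{i_t}^{j_t}$ has expectation $0$ unless the partial assignment $i_t\mapsto j_t$ ($t\in S$) is \emph{consistent} — i.e. whenever $i_t=i_{t'}$ we need $j_t=j_{t'}$, and for each letter $j$ the number of positions assigned $j$ does not exceed $a_j$. When it is consistent, writing $P$ for the set of distinct positions used and, for each letter $j$, $m_j$ for the number of positions in $P$ assigned letter $j$, one has
\[
\esper\Big[\prod_{t\in S}X_{i_t}^{j_t}\Big]=\frac{(n-|P|)!}{n!}\prod_{j}\frac{a_j!}{(a_j-m_j)!}
=\prod_{j}\frac{a_j(a_j-1)\cdots(a_j-m_j+1)}{n(n-1)\cdots(n-|P|+1)}.
\]
This is a ratio of falling factorials; the leading behaviour is $\prod_j (a_j/n)^{m_j}=\prod_{t\in S'}(a_{j_t}/n)$ where $S'$ indexes the distinct pairs, which is precisely $\Psi$ restricted to those pairs, and the correction factors are $1+\O(1/a_j)$ or $1+\O(1/n)$ depending on whether a letter is repeated. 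The key algebraic point is that these moments \emph{factor} as a product over letters of a term depending only on the positions carrying that letter, times the position-denominator $n^{\underline{|P|}}$ which couples everything; it is this near-factorization that makes the cumulants small.

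\emph{Step 2--4: from moments to the cumulant bound.} Substituting the Step 1 formula into \eqref{eq:def_cumulants}, I would expand each falling-factorial ratio as its leading term times a correction, so that the cumulant becomes $\Psi(B)$ times a combination of Möbius coefficients weighted by products of small corrections. Because the cumulant vanishes when $\{X_{i_1}^{j_1},\dots\}$ splits as a union of independent families (property \ref{item:ind_CumZero}) — and independence here is exactly the statement that two blocks use disjoint positions \emph{and} disjoint letters — every surviving term in the Möbius sum must "connect" all $r$ indices through shared positions ($i_t=i_{t'}$, weight $1$ in $\WDep^M$), shared letters ($j_t=j_{t'}$, weight $1/a_j$), or the generic coupling through the common denominator $n^{\underline{|P|}}$ (weight $1/n$). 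Extracting from the combinatorial cancellation a factor witnessing connectivity gives a bound by $\Psi(B)$ times a sum over spanning connected structures of products of edge-weights $1,1/a_j,1/n$; bounding each such product by the single largest spanning \emph{tree} (since all weights are $\le 1$, dropping extra edges only increases the product) yields $\MWST{\WDep^M[B]}$, with a combinatorial constant $C_r$ depending only on $r$ (the number of set partitions of $[r]$ times bounded coefficients), not on $M$. I expect the main obstacle to be precisely this last bookkeeping: showing rigorously that the Möbius-sum cancellation produces $r-1$ "correlation" factors (one per edge of a spanning tree) rather than fewer, uniformly in how the letters are distributed among positions — in particular handling the mixed case where some letters repeat inside $B$ and others do not, so that the relevant spanning tree mixes weight-$1/a_j$ and weight-$1/n$ edges. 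A clean way to organize this is to first treat the case of a single repeated letter (reducing, after conditioning on positions of that letter, to a hypergeometric-type computation giving the $1/a_j$ factor), then the case of all-distinct letters (giving the $1/n$ factors from the falling factorial in the denominator), and finally combine them multiplicatively using the factorization noted in Step 1; alternatively, one may invoke a general lemma of \cite{feray18dependency} that derives a weighted dependency graph for indicators from such explicit product moment formulas, if available.
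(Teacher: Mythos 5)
Your Step 1 is exactly the paper's starting point (the explicit joint moment formula \eqref{eq:joint_moments_Mset}), but the heart of the theorem is the passage from these moments to the cumulant bound, and that is precisely what your Steps 2--4 do not carry out. Worse, the mechanism you offer to justify it is flawed: you argue that "every surviving term in the M\"obius sum must connect all $r$ indices" because the cumulant vanishes when the family splits into independent parts, and that "independence here is exactly the statement that two blocks use disjoint positions and disjoint letters". That last claim is false in $S_M$: variables $X_i^j$ and $X_{i'}^{j'}$ with $i\neq i'$ and $j\neq j'$ are \emph{not} independent (their correlation is of order $1/n$ -- this is the whole reason an ordinary dependency graph fails here and a weighted one is needed). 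So property \ref{item:ind_CumZero} gives you nothing for generic blocks, no exact cancellation forces connectivity, and the claim that the M\"obius expansion produces $r-1$ small factors "witnessing connectivity" is exactly the nontrivial statement to be proved, not a bookkeeping afterthought. You acknowledge this yourself ("I expect the main obstacle to be precisely this last bookkeeping"), which confirms the gap. The paper closes it with the quasi-factorization machinery of \cite{feray18dependency}: one writes $u_\Delta=\esper\big[\prod_{t\in\Delta}X_{i_t}^{j_t}\big]$ as a product $u^{(1)}_\Delta u^{(2)}_\Delta\prod_j u^{(3,j)}_\Delta$, checks that each factor satisfies the quasi-factorization estimate $P_\Delta(\uu)=1+\O(\MWST{\WDepInd[\Delta]})$ for an elementary graph (no edges; complete graph with weights $1/n$; same-letter edges with weights $1/a_j$), takes the edge-wise maximum graph, and invokes the general "quasi-factorization implies small cumulants" proposition. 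Your closing sentence gestures at this route ("invoke a general lemma of \cite{feray18dependency} ... if available"), but the proposal neither identifies nor executes it, and without it the proof does not go through.

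A secondary gap: your reduction at the outset only removes repeated identical variables. You must also handle distinct variables linked by weight-$1$ edges, e.g.\ $X_i^j$ and $X_i^{j'}$ with $j\neq j'$: their product is a.s.\ $0$, but their joint cumulant with the other variables is not obviously controlled, and the explicit moment formula you rely on is only clean when all positions are distinct. The paper deals with this via the grouping reduction (\cite[Proposition 5.2]{feray18dependency}), which replaces the family by products over the connected components of the weight-$1$ subgraph and only then reduces to distinct lower indices; "repetitions only improve the bound" does not cover this case.
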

\begin{proof}
The proof is relatively easy using the tools given in \cite{feray18dependency},
but these tools require to introduce some terminology/notation.
As a start, recall that if $\WDep$ is a weighted graph and $B$
a subset of its vertices, we denote $\WDep[B]$ the subgraph induced by $G$ in $B$
(if $B$ is a multiset, we see it as a set, by simply forgetting repetitions).
Also, for a weighted graph $H$,
$\MWST{H}$ is the maximal weight of a spanning tree of $H$.

We have to prove that 
for any multiset \hbox{$B=\{X_{i_1}^{j_1},\ldots,X_{i_r}^{j_r}\}$} of elements of $A^M$,
one has
\begin{equation}
  \bigg| \ka\big( X_{i_1}^{j_1},\ldots,X_{i_r}^{j_r} \big) \bigg| \le
    C_r \, \Psi(B) \, \MWST{\WDep^M[B]}. 
    \label{EqFundamentalRepeated}
\end{equation}
For a multiset $B$, we denote $B_1$, $B_2$, \dots, the vertex-sets of the connected components
of $\WDep^M_1[B]$, the graph obtaining from $\WDep^M[B]$ by keeping only edges of weight $1$.
Using \cite[Proposition 5.2]{feray18dependency},
it is equivalent to prove \eqref{EqFundamentalRepeated} or the following:
for any multiset $B$ of elements of $A^M$ with $|B|=r$ we have
\begin{equation}
  \left|\kappa\left(\prod_{X_i^j\in B_1}X_i^j,\prod_{X_i^j\in B_2}X_i^j,\ldots\right)\right|
  \leq D_r\Psi(B)\mathcal{M}\left(\WDep^M[B]\right),
  \label{eq:ToProve}
\end{equation}
for some sequence $D_r$ also independent of $M$.

By definition of $\WDep^M$,
vertices $X_i^j$ and $X_{i'}^{j'}$ are connected in $\WDep^M_1[B]$
if and only if $i=i'$ {\bf or} both $j=j'$ and $a_j=1$.
In this case, $X_i^jX_{i'}^{j'}=0$ a.s. unless $i=i'$ {\bf and} $j=j'$.
Of course, if one of the product on the left-hand side of \eqref{eq:ToProve} is a.s. $0$,
then the inequality is trivial.
Thus it suffices to consider the case where each component $B_k$
contains at most one distinct element $X_i^j$, say with multiplicity $m$.
But since $X_i^j$ is a Bernoulli random variable, we have $(X_i^j)^m=X_i^j$;
the right-hand side of \eqref{eq:ToProve} is also insensitive
to repetitions in $B$, so that we can assume $m=1$ (in each of the components $B_k$).
In other words, we only need to prove \eqref{eq:ToProve} in the case
the left-hand-side is $\big|\ka\big( X_{i_1}^{j_1},\ldots,X_{i_r}^{j_r} \big)\big|$,
for \defn{distinct} $i_1,\dots,i_r$ (and such that repeated entries $j$ in
the list $j_1,\cdots,j_r$ fulfills $a_j >1$, but we will not need this extra condition).
\bigskip

The proof of \eqref{EqFundamentalRepeated} in the all $i$ 
distinct case is based on the formula for joint moments in this case:
if $C$ is such a subset of $A^M$, we have
\begin{equation}
  \E\left(\prod_{X_i^j\in C}X_i^j\right)=
  \frac{\binom{n-|C|}{a_1-\#\{X_i^1\in C\},a_2-\#\{X_i^2\in C\},\ldots}}{\binom{n}{a_1,a_2,\ldots}}.
  \label{eq:joint_moments_Mset}
\end{equation}
Indeed, the numerator counts the number of multiset permutations $\sigma$ of $M$ with $\sigma(i)=j$ for all $X_i^j\in C$,
while the denominator is the total number of multiset permutations of $M$.
To get bounds on cumulants, we use this expression 
and the quasi-factorization technique, as developed in \cite[Section 5.2]{feray18dependency}.

%
Consider a family $\ub=(u_{\Delta})_{\Delta\subseteq[\ellorr]}$ of real numbers 
indexed by subsets of $[\ellorr]$ with $u_{\emptyset} \ne 0$.
We furthermore assume that $u_\delta=0$ implies that $u_\Delta=0$ as well 
for all subsets $\Delta$ containing $\delta$;
we call this \defn{the vanishing ideal condition}.
In the following, all families under consideration fulfill the vanishing ideal condition.
For such a $\uu$, we define
\[ \P_\Delta(\uu)
=\prod_{\delta \subseteq \Delta} \left( u_\delta \right)^{(-1)^{|\Delta|-|\delta|}}.\]
By convention, if the above fraction is $0/0$, we set $\P_\Delta(\uu)=0$.
A simple M\"obius inversion gives back
\[ u_\Delta= \prod_{\delta \subseteq \Delta} P_\delta(\uu). \]

Let $\WDepInd$ be a weighted graph on $[\ellorr]$.
We say that $\uu$ has the $\WDepInd$-quasi factorization property if,
for each $\Delta \subseteq [\ellorr]$ {\em of size at least $2$}, we have
\begin{equation}
  P_\Delta(\uu) = 1 +\O\big(\MWST{\WDepInd[\Delta]} \big).
  \label{eq:def_QF}
\end{equation}
The constants in the $\O$ symbol should depend only on $\ellorr$,
in particular, in the following, they are independent of $B$ and $M$.

Fix now a subset $B=\{X_{i_1}^{j_1},\cdots,X_{i_r}^{j_r}\}$
of $A^M$ with distinct $i_1,\cdots,i_r$.
For $\Delta \subseteq [r]$, we define
$\uu_\Delta= \esper\big[ \prod_{t \in \Delta}  X_{i_t}^{j_t} \big]$. 
(Note that the dependence in $M$ and $B$ is kept implicit here.)
From \cref{eq:joint_moments_Mset}, this is explicitly given as
\[ \uu_\Delta= \uu_\Delta^{(1)} \uu_\Delta^{(2)} \prod_{j \ge 1} \uu_\Delta^{(3,j)},\]
with 
\[\uu_\Delta^{(1)}=\frac{1}{\binom{n}{a_1,a_2,\ldots}};\quad \uu_\Delta^{(2)}=(n-|\Delta|)!;
\quad u_{\Delta}^{(3,j)} =\frac{1}{(a_j-\#\{t \in \Delta: j_t=j\})!}. \]
By convention, $u_{\Delta}^{(3,j)}=0$ if $a_j-\#\{t \in \Delta: j_t=j\}<0$.
Note that all these families have the vanishing ideal property.
We discuss the quasi-factorization property of each factor separately.
  \begin{enumerate}
    \item The first factor $u_{\Delta}^{(1)}$ is independent of $\Delta$
      and therefore $\uu^{(1)}$ trivially satisfies the $\WDepInd^{(1)}$-quasi
      factorization property, where
      $\WDepInd^{(1)}$ is the graph on $[r]$ with no edges.
    \item The family $\uu^{(2)}$ defined by $u_{\Delta}^{(2)}=(n-|\Delta|)!$
      satisfies the $\WDepInd^{(2)}$-quasi factorization property,
      where $\WDepInd^{(2)}$ is the complete graph on $[r]$
      with weight $1/n$ on each edge
      \cite[Proposition 5.10]{feray18dependency}.
    \item Fix $j \ge 1$ and consider the factor $u_{\Delta}^{(3,j)}=(a_j-\#\{i:X_i^j\in \Delta\})!^{-1}$.
      Let $\WDepInd^{(3,j)}$ be the graph with vertex-set $[r]$,
      and an edge of weight $\tfrac{1}{a_j}$ between $s$ and $t$ if $j_s=j_t=j$
      (in particular, if $j_s \ne j$, then $s$ is isolated in $\WDepInd^{(3,j)}$).
      We claim that $\uu^{(3,j)}$ has the $\WDepInd^{(3,j)}$ quasi factorization property.
      Indeed, if $\Delta$ contains an $s$ such that $j_s \ne j$,
      then $u_{\delta \cup \{s\}}^{(3,j)}=u_{\delta}^{(3,j)}$
      for $\delta \subseteq (\Delta \setminus \{s\})$
      and this implies $P_\Delta(\uu^{(3,j)})=1$ so that \eqref{eq:def_QF} holds trivially.
      On the other hand, if $\Delta \subseteq \{s: j_s = j\}$,
      then $u_{\delta}^{(3,j)}= (a_j-|\delta|)!^{-1}$ and
      the estimate \eqref{eq:def_QF} again 
      follows from \cite[Proposition 5.10]{feray18dependency}.
  \end{enumerate}
  Denote $\WDepInd$ the following graph with vertex-set $[r]$:
  the weight of the edge between $s$ and $t$ is the {\bf maximum} of the
  weights of the corresponding edges in $\WDepInd^{(1)}$, $\WDepInd^{(2)}$
  and in all the $\WDepInd^{(3,j)}$.
  Note that $\WDepInd$ corresponds to $\WDep^M[B]$,
  where $\WDep^M$ is defined in the statement of \cref{thm:WDG_MSet}. 
  Observe that, for any $\Delta$, we have 
  \[P_\Delta(\uu)=P_\Delta(\uu^{(1)}) \, P_\Delta(\uu^{(2)})\, 
  \prod_{j \ge 1} P_\Delta(\uu^{(3,j)}),\]
  where at most one factor in the infinite product is different from $1$.
  Together with the above observations, this implies that
  $\uu$ has the $\WDepInd$ quasi factorization property.
  In \cite{feray18dependency}, it is proved that the $H$ quasi-factorization property
  implies the so-called $H$ small cumulant property\footnote{See
  \cite[Proposition 5.8]{feray18dependency}; the implication is proved for families $\uu$ with non-zero entries,
  but the proof extends readily to families with the vanishing ideal property.},
  {\it i.e.} in particular the following inequality:
  \[ \big|\kappa_r(X_i^j,\, X_i^j \in B) \big| =
  \O \left( \MWST{\WDepInd} \, \prod_{X_i^j \in B} \esper(X_i^j)  \right),\]
  which is what we needed to prove.
\end{proof}

\subsection{A preliminary estimate}
Let $M=\{1^{a_1},2^{a_2},\dots\}$ be a finite multiset 
of size $n=\sum_{i\geq1} a_i$.
We let $(b_j)_{j \ge 1}$ be the non increasing reordering of $(a_i)_{i \ge 1}$
and define, for $j \ge 1$,
\begin{equation}
  n_{(j)}=n-b_1- \cdots - b_j.
  \label{eq:DefNj}
\end{equation}
Furthermore, for an integer $d \ge 1$,
we denote by $e_d(M)$ the $d$-th elementary symmetric function
evaluated in the numbers $a_1,a_2,\dots$, that is
\[e_d(M) := \sum_{j_1 < \dots < j_d} a_{j_1} \dots a_{j_d}.\]
These quantities turn out to be omnipresent when we evaluate the various
parameters needed to prove our central limit theorem.
\begin{lemma}
  \label{lem:Bound_Elem}
  For any $d \ge 1$ and any multiset $M=\{1^{a_1},2^{a_2},\dots\}$,
  we have
  \[\frac{1}{d!} n \, n_{(1)} \, \cdots \, n_{(d-1)}
  \le e_d(M) \le n \, n_{(1)} \, \cdots \, n_{(d-1)}.\]
\end{lemma}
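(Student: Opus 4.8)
The plan is to treat the two inequalities separately: the upper one reduces to an inclusion of index sets, while the lower one will come from a short induction on $d$. Throughout, write $\mathbf{b}=(b_1,b_2,\dots)$ for the multiplicities of $M$ listed in non-increasing order (the $b_j$ of \eqref{eq:DefNj}, padded with zeros), so that $n=\sum_{i\ge1}b_i$, $n_{(j)}=\sum_{i>j}b_i$ for $j\ge0$ (in particular $n_{(0)}=n$), and $e_d(M)=e_d(\mathbf{b})$ by symmetry of the elementary symmetric functions. Expanding a product of sums yields
\[
 n_{(0)}\,n_{(1)}\cdots n_{(d-1)}=\prod_{j=0}^{d-1}\Bigl(\sum_{i>j}b_i\Bigr)=\sum_{\substack{i_1\ge1,\ i_2\ge2,\ \dots,\ i_d\ge d}}b_{i_1}b_{i_2}\cdots b_{i_d}.
\]
For the upper bound I would note that $e_d(\mathbf{b})=\sum_{v_1<v_2<\cdots<v_d}b_{v_1}b_{v_2}\cdots b_{v_d}$ and that every strictly increasing tuple of positive integers automatically satisfies $v_\ell\ge\ell$; hence every term of $e_d(\mathbf{b})$ already occurs on the right-hand side above, and since all summands are nonnegative, $e_d(M)\le n_{(0)}n_{(1)}\cdots n_{(d-1)}$.

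For the lower bound I would induct on $d$, the base case $d=1$ being the equality $e_1(M)=n=n_{(0)}$. For the inductive step, let $M'$ be $M$ with one part of maximal multiplicity deleted, so its sorted multiplicities are $b_2,b_3,\dots$; write $n'=|M'|=n-b_1$ and $e'_k:=e_k(M')$, and observe that the quantity ``$n_{(j)}$'' attached to $M'$ equals $n_{(j+1)}$. Factoring $n_{(0)}=n$ out of the displayed identity and applying the induction hypothesis to $M'$ (with $d-1$ in place of $d$) gives
\[
 n_{(0)}n_{(1)}\cdots n_{(d-1)}=n\,\bigl(n_{(1)}\cdots n_{(d-1)}\bigr)\le n\,(d-1)!\,e'_{d-1},
\]
so it remains only to prove $n\,e'_{d-1}\le d\,e_d(M)$. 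I would deduce this from the standard deletion identity $d\,e_d(\mathbf{b})=\sum_{i\ge1}b_i\,e_{d-1}(\mathbf{b}\setminus b_i)$, where $\mathbf{b}\setminus b_i$ is $\mathbf{b}$ with its $i$-th entry removed: since $e_{d-1}$ is non-decreasing in each variable on the nonnegative orthant and $b_1=\max_i b_i$, for every $i$ the sequences $\mathbf{b}\setminus b_i$ and $\mathbf{b}\setminus b_1$ agree except that one has $b_1$ where the other has $b_i\le b_1$, whence $e_{d-1}(\mathbf{b}\setminus b_i)\ge e_{d-1}(\mathbf{b}\setminus b_1)$; summing against the weights $b_i$ gives $d\,e_d(\mathbf{b})\ge\bigl(\sum_i b_i\bigr)e_{d-1}(\mathbf{b}\setminus b_1)=n\,e'_{d-1}$, and the induction closes.

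The genuinely delicate point is this last inequality $n\,e'_{d-1}\le d\,e_d(M)$: it is false without using $b_1=\max_i b_i$ --- otherwise one could concentrate all the mass on a single part, so that $e_d(M)$ vanishes while $e'_{d-1}$ does not --- so anchoring the monotonicity argument at a most-repeated part is precisely what makes it go through. Everything else is routine bookkeeping, and both bounds come out exact, the only $d$-dependence being the explicit factorial.
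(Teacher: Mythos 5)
Your proof is correct. The upper bound is exactly the paper's argument: expand $n\,n_{(1)}\cdots n_{(d-1)}$ as a sum over tuples with $i_\ell\ge\ell$ and note that every strictly increasing index tuple of $e_d$ satisfies this, so termwise inclusion of nonnegative summands gives the bound. For the lower bound you take a genuinely different route. The paper argues directly: it writes $d!\,e_d(M)$ as the iterated sum $\sum_{j_1}a_{j_1}\bigl(\sum_{j_2\ne j_1}a_{j_2}(\cdots)\bigr)$ over distinct indices and bounds the sums from the inside out, the innermost by $n-a_{j_1}-\dots-a_{j_{d-1}}\ge n_{(d-1)}$, then $n_{(d-2)}$, and so on; this is shorter and needs no auxiliary identity. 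You instead induct on $d$, reduce to the intermediate inequality $n\,e_{d-1}(M')\le d\,e_d(M)$ where $M'$ drops a part of maximal multiplicity, and prove it from the deletion identity $d\,e_d(\mathbf b)=\sum_i b_i\,e_{d-1}(\mathbf b\setminus b_i)$ together with the exchange/monotonicity observation $e_{d-1}(\mathbf b\setminus b_i)\ge e_{d-1}(\mathbf b\setminus b_1)$, which indeed requires anchoring at the largest multiplicity, as you point out. Both proofs ultimately exploit the same extremal fact --- deleting the largest multiplicities is the worst case, which is also what makes $n_{(j)}$ the right quantity --- but your version isolates a cleaner intermediate statement at the cost of an extra identity and an induction, while the paper's nested-sum estimate gets the constant $d!$ in one pass. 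Your argument also handles the degenerate cases (fewer than $d$ distinct parts) correctly, since then some $n_{(j)}$ with $j\le d-1$ vanishes.
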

In practice, the degree $d$ will be fixed, while $n$
tends to infinity, so that the above lemma gives us the exact order of magnitude
of $e_d(M)$.
For a $d-1$-regular sequence of multiset partition,
we clearly have $n_j=\Theta(n)$ for any $j \le d-1$, so that
$e_d(M)=\Theta(n^d)$.
\begin{proof}
  We start with the upper bound.
  Assume, without loss of generality that 
  the sequence $(a_i)_{i \ge 1}$ is nonincreasing,
  in which case $n_{(h)}=n-a_1- \dots-a_h$.
  Then, we have
  \begin{multline*}
    e_d(a_1,a_2,\cdots) = \sum_{j_1 < \dots < j_d} a_{j_1} \dots a_{j_d} \\
  \le \left( \sum_{j_1 \ge 1} a_{j_1} \right) 
  \, \left( \sum_{j_2 \ge 2} a_{j_2} \right)
  \, \cdots \, \left( \sum_{j_d \ge d} a_{j_d} \right)
  = n \, n_{(1)} \, \cdots \, n_{(d-1)}.
  \end{multline*}
  For the lower bound, we first observe that
  \[d! \, e_d(a_1,a_2,\cdots) = \sum_{j_1 \ge 1} a_{j_1} \left( 
  \sum_{j_2 \ne j_1} a_{j_2} \left( \dots \left( \sum_{j_d \ne j_1,\dots,j_{d-1}} a_{j_d} \right) \right) \right)\]
  The inner sum over $j_d$ can be bounded below as follows:
  \[\left( \sum_{j_d \ne j_1,\dots,j_{d-1}} a_{j_d} \right) 
  = n - a_{j_1} - \dots - a_{j_{d-1}} \ge n_{(d-1)}. \]
  This bound does not depend on $j_1,\dots,j_{d-1}$ and can therefore be factorized out of all sums.
  We then bound the sum over $j_{d-1}$ by $n_{(d-2)}$, which can also be factorized out.
  Iterating this procedure, we get
  \[d! \, e_d(a_1,a_2,\cdots)  \ge n_{(d-1)} n_{(d-2)} \dots n_{(1)} n,\]
  as claimed.
\end{proof}

\subsection{The central limit theorem}
\label{ssec:proof_MSet}
The goal of this section is to prove \cref{Thm:CLT_Multiset}.
Fix a pattern $\tau$ and a sequence of finite multisets,
where $M^{(m)}=\{1^{a_1^{(m)}},2^{a_2^{(m)}},\ldots\}$.
Most of the time we will omit the superscript $m$ and 
denote $n=|M^{(m)}|$.
We first observe that the number $\OccMP$ of occurrences
of a pattern $\tau$ in a uniform random element $\bm{\word}$ in $S_M$
can be written as 
\begin{equation}
  \OccMP(\bm{\word}) = \sum_{i_1<\dots<i_\ell \le n \atop j_1<\dots<j_\ell}
  X_{i_1}^{j_{\tau(1)}} \dots X_{i_\ell}^{j_{\tau(\ell)}}.
  \label{eq:OccPiGoodDecomposition}
\end{equation}
Combining \cref{thm:WDG_MSet,prop:WDG_Products},
we know that the family 
\[A_\tau:=\big\{ X_{i_1}^{j_{\tau(1)}} \dots X_{i_\ell}^{j_{\tau(\ell)}},
\quad i_1<\dots<i_\ell\le n,\, j_1<\dots<j_\ell \big\} \]
admits $(G^M)^\ell$ as $(\bD,\bm\Psi)$ dependency graph, where $\bm D$ and $\bm\Psi$ are as follows:
\begin{itemize}
  \item With a multiset $B$ of monomials in $A_\tau$,
    the function $\bm\Psi$ associates $\prod_{X_i^j} \frac{a_j}{n}$,
    where the product runs over the distinct variables $X_i^j$ appearing in some monomial in $B$;
  \item $\bD=(D_1,D_2,\dots)$ is a sequence of constants independent of the set partition $M$
    under consideration.
\end{itemize}
Our goal is to apply the normality criterion of \cref{Thm:WDG} to this (sequence of) dependency graph(s).
The first task is to estimate the parameters $R$ and $T_h$.
For $R$, this is immediate; indeed, we write
\[R= \sum_{i_1<\dots<i_\ell\le n,\atop j_1<\dots<j_\ell} \prod_{t=1}^\ell \frac{a_{j_t}}{n} 
= \binom{n}{\ell} \frac{e_\ell(M)}{n^\ell} = \Theta(e_\ell(M)). \]
We now consider $T_h$. By definition,
\begin{equation}
  T_h:=\max_{\alpha_1,\ldots,\alpha_h \in A_\tau}
  \left[\sum_{\beta\in A_\tau}W(\{\beta\},\{\alpha_1,\alpha_2,\ldots,\alpha_h\})\frac{\mathbf{\Psi}(\{\alpha_1,\alpha_2,\ldots,\alpha_h,\beta\})}{\mathbf{\Psi}(\{\alpha_1,\alpha_2,\ldots,\alpha_h\})}\right].
  \label{eq:Th_Multiset}
\end{equation}
We fix a set 
  $S=\{\alpha_1,\alpha_2,\ldots,\alpha_l\}\subseteq A_\tau$ and write 
  \[F(\beta):=W(\{\beta\},\{\alpha_1,\alpha_2,\ldots,\alpha_h\})\frac{\mathbf{\Psi}(\{\alpha_1,\alpha_2,\ldots,\alpha_h,\beta\})}{\mathbf{\Psi}(\{\alpha_1,\alpha_2,\ldots,\alpha_h\})}.\]
  We use the $\O$-notation with implicit constants depending on $h$. 
  We split the sum over $\beta$ into three parts:
  \begin{enumerate}
    \item Consider first the monomial $\beta=X_{i_1}^{j_{\tau(1)}} \dots X_{i_\ell}^{j_{\tau(\ell)}}$,
      which do not share any index with any of the $\alpha$ in $S$ (i.e. neither an upper,
      nor a lower index).
      For such $\beta$, the $W$ factor in $F(\beta)$ is $1/n$,
      while the quotient of $\Psi$ is $\prod_t \tfrac{a_{j_t}}{n}$,
      which yields
      \[F(\beta)=n^{-\ell-1} \prod_{t=1}^\ell a_{j_t}. \]
      We should sum this over ordered $\ell$-uplets $(i_1,\dots,i_\ell)$
      and $(j_1,\dots,j_\ell)$ with values in $[n]$.
      The sum over $(j_1,\dots,j_\ell)$ gives $n^{-\ell-1} e_\ell(M)$.
      This is independent of $i_1,\dots,i_\ell$ so that summing over
      the $\O(n^\ell)$ possible $\ell$-uplets $(i_1,\dots,i_\ell)$
      gives $\O\big(n^{-1} e_\ell(M)\big)$.

      Summing up, the total contribution of monomials $\beta$ not sharing any index
      with any $\alpha$ in $S$ to the sum in \eqref{eq:Th_Multiset}
      is $\O\big(n^{-1} e_\ell(M)\big)$.

    \item We now consider monomials $\beta$
      which do not share a lower index with any $\alpha\in S$
      but do share some upper index, say $j_r$, with some $\alpha\in S$.
      For such $\beta$, we have that
      $W(\{\beta\},\{\alpha_1,\alpha_2,\ldots,\alpha_h\})=\frac{1}{\alpha_{j_r}}$ and
      \[F(\beta) = \tfrac{1}{a_{j_r}} \prod_{t=1}^\ell \tfrac{a_{j_t}}{n}
      = n^{-\ell} \prod_{t \ne r} a_{j_t}.\]
      Again, we should sum over ordered $\ell$-uplets $(i_1,\dots,i_\ell)$  
      and $(j_1,\dots,j_\ell)$ with values in $[n]$.
      But the number of possible values of $j_r$ is finite since it must be chosen among
      the lower indices of $\alpha_1,\dots,\alpha_\ell$.
      Up to a constant factor, we can therefore only consider the sum 
      over $j_1,\dots,j_{r-1},j_{r+1},\dots,j_\ell$, which gives $\O\big(n^{-\ell}e_{\ell-1}(M)\big)$.
      Finally summing over the $\O(n^\ell)$ possible $\ell$-uplets $(i_1,\dots,i_\ell)$
      yields $\O\big(e_{\ell-1}(M)\big)$.
      This is the total contribution to the sum in \eqref{eq:Th_Multiset}
      of monomial $\beta$ in this second case.

    \item The third and last case is that of monomials $\beta$ 
      sharing some lower index, say $i_r$ with some $\alpha\in S$.
      In that case, the $W$ factor in $F(\beta)$ is one.
      Define 
      \[T(\beta)=\big\{t \in [\ell]:\, X_{i_t}^{j_{\tau(t)}} 
      \text{ is {\bf not} a factor of some $\alpha$ in }S\big\} \subseteq [\ell].\]
      Then we have $F(\beta)=\prod_{t \in T(\beta)} \frac{a_{j_{\tau(t)}}}{n}=
      n^{-|T(\beta)|} \prod_{t \in T(\beta)} a_{j_{\tau(t)}}$.
      The number of possible values for the set $T(\beta)$ is finite,
      so that it is enough to bound the sum in \eqref{eq:Th_Multiset}
      over $\beta$'s with a given value of $T(\beta)$.

      Given $T_0 \subseteq [\ell]$, a monomial $\beta$ with $T(\beta)=T_0$
      is described by the lists $(i_t)_{t \in T_0 \setminus [r]}$ and $(j_{\tau(t)})_{t \in T_0}$,
      and some additional finite choices (the values of $i_t$ and $j_{\tau(t)}$, for $t \notin T_0$,
      as well as that of $i_r$ if $r \in T_0$).
      Similarly as above, summing $F(\beta)$ over $(j_{\tau(t)})_{t \in T_0}$
      gives $n^{-|T_0|} e_{|T_0|}(M)$, while summing over $(i_t)_{t \in T_0}$
      yields $n^{|T_0|-1}$ or $n^{|T_0|}$, depending on whether $r$ is in $T_0$ or not.
      Therefore, the total contribution to the sum in \eqref{eq:Th_Multiset} 
      of monomials $\beta$ in this third case with $T(\beta)=T_0$
      is either $\O(n^{-1} e_d(M))$ for $d:=|T_0|\le \ell$ (in the case $r \in T_0$)
      or $\O( e_d(M))$ for $d:=|T_0|\le \ell-1$ (in the case $r \notin T_0$).
  \end{enumerate}
  From \cref{lem:Bound_Elem}, we see that, as long as $n_{(\ell-1)}$ tends to infinity
  (so in particular for an $\ell$-regular sequences of multiset partition $M^{(m)}$),
  the biggest of the above bounds is $\O\big(e_{\ell-1}(M)\big)$.
  We therefore conclude that $T_h = \O(e_{\ell-1}(M))$.
 \bigskip

Next, we need a lower bound on the variance.
Here we need our regularity assumption on the sequence of multiset partitions.
\begin{proposition}
  Let $\tau$ be a fixed pattern of size $\ell$ and consider a $\ell$-regular sequence $M^{(m)}$ of multisets.
  There exists a constant $\Kv >0$ such that
  \[\Var(\OccMP_{M^{(m)}}) \ge 
  \Kv |M^{(m)}|^{2\ell-3/2}.\]
  \label{prop:LowerBoundVarMP}
\end{proposition}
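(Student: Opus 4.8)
The plan is to obtain the variance lower bound by conditioning on the position of one occurrence of the smallest letter, following the recursive-inequality method of Hofer \cite{hofer2018central}. Since the sequence is $\ell$-regular, we may fix $m$ large and write $n=|M^{(m)}|$; at least one letter, say the letter $1$, has multiplicity $a_1$ with $n-a_1 \ge (1-K)n$, i.e.\ a positive proportion of the positions carry a letter $\ge 2$. Actually it is cleaner to single out a letter with small multiplicity: by $\ell$-regularity there is a letter $c$ (among the smallest) such that removing all copies of $c$ still leaves $\Theta(n)$ positions, and such that $a_c \le n/\ell$; we condition on the set $P$ of positions occupied by the copies of $c$. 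Conditionally on $P$, the word restricted to the complementary positions is a uniform multiset permutation of $M' = M \setminus \{c^{a_c}\}$, which has size $n' = n - a_c = \Theta(n)$ and is itself $\ell$-regular (with the same constant $K$, up to enlarging $m_0$).

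The key identity is the law of total variance applied to this conditioning:
\[
  \Var(\OccMP_M) = \E\big[\Var(\OccMP_M \mid P)\big] + \Var\big(\E[\OccMP_M \mid P]\big)
  \ge \E\big[\Var(\OccMP_M \mid P)\big].
\]
Now $\OccMP_M = \OccMP_{M'} + (\text{occurrences using at least one position of } P)$, where $\OccMP_{M'}$ is the number of occurrences of $\tau$ among the positions outside $P$; conditionally on $P$, this is a copy of $\OccMP_{M'}$, a uniform multiset permutation of $M'$. The remaining term, counting occurrences that use at least one position of $P$, is where the letter $c$ plays the role of the smallest (or an interior) element of the pattern $\tau$. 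By a further round of conditioning — fixing the relative ranks of the other letters involved — one extracts a conditional variance contribution of order $\Theta(n^{2\ell-2})$ times the number of choices for the $P$-position, which can be shown to be $\Omega(n^{2\ell - 3/2})$; alternatively, and more robustly, one writes a recursive inequality
\[
  \Var(\OccMP_M) \ge \Var(\OccMP_{M'}) + (\text{explicit nonnegative cross term}) - (\text{correction}),
\]
iterates it down a chain of multisets $M \supset M' \supset M'' \supset \cdots$ of sizes $n, n', n'', \dots$ all of order $\Theta(n)$, and checks that the accumulated explicit terms already sum to $\Omega(n^{2\ell-3/2})$. The exponent $2\ell - 3/2$ (rather than the expected sharp $2\ell-1$) is precisely the slack one pays for making the recursion close cleanly: at each step one gains a polynomially large but not quite optimal contribution, and $3/2$ is a safe power for which the bookkeeping goes through.

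**The main obstacle** is isolating, inside $\Var(\OccMP_M \mid P)$ or inside the cross term of the recursion, a genuinely positive and polynomially large contribution, uniformly over the (many) possible values of $P$ and over the $\ell$-regular family $M^{(m)}$. Concretely one must show that the indicator that a given block of $\ell-1$ positions (together with one position of $P$) forms a $\tau$-occurrence is sufficiently correlated with itself across disjoint choices to beat the cancellations; this is a second-moment computation using the explicit joint moment formula \eqref{eq:joint_moments_Mset}, and the $\ell$-regularity is exactly what guarantees the relevant elementary symmetric functions $e_d(M')$ stay of order $\Theta(n^d)$ via \cref{lem:Bound_Elem}, so that the conditional variance does not degenerate. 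Combined with \cref{prop:WDG_UpperBound_Var} (which gives the matching upper bound $\Var(\OccMP_M) = \O(R\,T_1) = \O(e_\ell(M)\, e_{\ell-1}(M)) = \O(n^{2\ell-1})$), the proposition will then feed directly into the normality criterion \cref{Thm:WDG}: with $Q_n = e_{\ell-1}(M) = \Theta(n^{\ell-1})$, $R_n = \Theta(n^\ell)$, and $\sigma_n = \Omega(n^{\ell - 3/4})$, one has $(R_n/Q_n)^{1/s} Q_n/\sigma_n = \O\big(n^{1/s} n^{\ell-1} / n^{\ell-3/4}\big) = \O\big(n^{1/s - 1/4}\big) \to 0$ for any $s \ge 5$, completing the proof of \cref{Thm:CLT_Multiset}.
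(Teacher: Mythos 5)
You have the right strategy in outline (condition on the placement of a smallest letter and run a Hofer-style recursion, which is exactly what the paper does), but the proposal is missing the one idea that makes the bound work, and the inequality you write down actually discards it. In the paper one conditions on the \emph{single}, uniformly distributed position $P$ of one copy of the minimal letter $j_0$ (not on the set of positions of all copies of a letter $c$): then any occurrence through $P$ must use $P$ as the image of $\tau^{-1}(1)$, so $\esper(C\mid P)=\binom{P-1}{r-1}\binom{n-P}{\ell-r}\cdot(\text{a deterministic sum})$, and the gain at each step is the term $\Var\big[\esper(C\mid P)\big]\ge \Kab\, n^{2\ell-2}$, coming from the explicit variance of this product of binomials in the uniform variable $P$ (the deterministic sum being $\Theta(1)$ by $\ell$-regularity and \cref{lem:Bound_Elem}). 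Your law-of-total-variance step keeps $\esper[\Var(\cdot\mid P)]$ and throws away $\Var(\esper[\cdot\mid P])$, i.e.\ precisely the term that produces the polynomially large positive contribution. Inside $\esper[\Var(\cdot\mid P)]$ there is no ``explicit nonnegative cross term'': expanding $\Var(B+C\mid P)$ produces a covariance of uncontrolled sign, which the paper handles by Cauchy--Schwarz at the price of a correction $2\sqrt{v(M')}\sqrt{\esper[\Var(C\mid P)]}$; absorbing that correction requires both the a priori bound $v(M')\ge \Kab n^{2\ell-2}$ (which itself comes from the discarded term) and the upper bound $\esper[\Var(C\mid P)]=\O(n^{2\ell-3})$, proved via the weighted dependency graph of \cref{thm:WDG_MSet} — your ``second-moment computation'' is exactly this lemma, but you never carry it out, and with your conditioning (on the positions of all $a_c$ copies of $c$, with $c$ possibly an interior value of the pattern) neither simplification is available.

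The quantitative bookkeeping is also asserted rather than derived. The claim that one gets ``$\Theta(n^{2\ell-2})$ times the number of choices for the $P$-position, which can be shown to be $\Omega(n^{2\ell-3/2})$'' has no justification: variance contributions attached to different positions of $c$ do not add (the corresponding occurrences are correlated), and with $a_c$ as large as $n/\ell$ such counting would suggest $n^{2\ell-1}$, not $n^{2\ell-3/2}$. The true source of the exponent $2\ell-3/2$ is the recursion $v(M)\ge v(M')(1-\Kc n^{-1/2})+\Kab n^{2\ell-2}$: each removal of a smallest letter gains $\Theta(n^{2\ell-2})$, the relative loss per step is $\O(n^{-1/2})$ (the quotient of the $n^{2\ell-3}$ and $n^{2\ell-2}$ bounds above), so one may iterate only $\Theta(\sqrt n)$ times while keeping the product of factors $(1-\Kc(n-i)^{-1/2})$ bounded away from $0$ and the shrinking multisets $\ell$-regular; summing $\Theta(\sqrt n)$ gains of size $\Theta(n^{2\ell-2})$ yields $\Kv n^{2\ell-3/2}$. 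None of these steps — identifying the gain term, bounding the correction, and choosing $\Theta(\sqrt n)$ iterations — appears in your write-up, so as it stands the proposal is a plan rather than a proof.
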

The proof of this statement being rather technical, 
we postpone it to the end of the multiset partition part,
i.e. \cref{sec:VarianceMP}.
\bigskip

We can now prove \cref{Thm:CLT_Multiset}, using the normality criterion given in \cref{Thm:WDG}.
We assume that the sequence of multiset partitions $M^{(m)}$ is regular.
This implies the estimate $e_{\ell-1}(M)=\Theta(n^{\ell-1})$, so that $T_h=\O(n^{\ell-1})$;
therefore we can take $Q_n=n^{\ell-1}$ in \cref{Thm:CLT_Multiset}.
Combining with \cref{prop:LowerBoundVarMP}, we find
$\frac{Q_n}{\sigma_n} \le \frac1{\sqrt{\Kv}} \frac{n^{\ell-1}}{n^{\ell-3/4}} =\O(n^{-1/4})$.
Furthermore, $R_n=\Theta(e_\ell(M))=\O(n^{\ell})$.
Summing up, for $s=5$,
we have
\[   \left(\tfrac{R_n}{Q_n}\right)^{1/s}\, \tfrac{Q_n}{\sigma_n} 
=\O\big(n^{1/5} n^{-1/4}\big)=\O(n^{-1/20}).
\]
This tends to $0$ as $n$ tends to infinity, and \eqref{EqHypoMainThm} is satisified.
We conclude that $\OccMP_{M^{(m)}}$ is asymptotically normal, as wanted. \qed

\subsection{Variance estimate for pattern counts in multiset permutations}
\label{sec:VarianceMP}

As above, we fix a pattern $\tau$ of size $\ell$.
Throughout this section, we assume that $M^{(m)}$ is an $\ell$-regular sequence of multisets.
We will most of the time drop to superscript $m$ and denote $n=|M^{(m)}|$.
The goal of this section is to prove \cref{prop:LowerBoundVarMP},
that is to bound $\Var(\Occ^\tau(\bm\word))$ from below,
where $\bm\word$ is a random uniform multiset permutation of $M$.
We use the notation $v(M)= \Var(\Occ^\tau(\bm\word))$,
making the dependence in $\tau$ implicit.

\subsubsection{An initial bound and a recursive inequality on the variance}
Let $M=\{1^{a_1},2^{a_2},\dots\}$ be a multiset of size $n$
and choose $j_0$ minimal such that $a_{j_0} >0$.
We denote $M'$ the multiset obtained by removing a single copy of $j_0$ from $M$.

Then a random uniform multiset permutation $\bm\word$ of $M$ can be constructed as follows.
\begin{itemize}
  \item Choose $P$ uniformly at random between $1$ and $n$ and set $\bm\word_P=j_0$.
  \item Take a uniform random multiset permutation $\bm\word'$ of $M'$, independent from $P$,
    and fill the other positions of $\bm\word$
    in the same order as in $\bm\word'$. Formally, we set, for $i \ne P$,
    \[ \bm\word_i=\begin{cases}
      \bm\word'_i &\text{ if }i <P;\\
      \bm\word'_{i-1} &\text{ if }i >P.
    \end{cases}\]
\end{itemize}
It is straightforward to check that, by construction, $\bm\word$ is a uniform random multiset permutation
of $M$. Moreover, we can write $\OccMP(\bm\word) = B + C$, where
\begin{itemize}
  \item $B$ is the number of occurrences of $\tau$ in $\bm\word$, 
    {\em not using} the position $P$ -- it is easy to see that 
    $B=\OccMP(\bm\word')$ --;
  \item $C$ is the number of occurrences of $\tau$ in $\bm\word$,
    using the position $P$.
\end{itemize}
Using the law of total variance, we have
\[v(M)=\Var(\OccMP(\bm\word)) = \esper\big[ \Var(\OccMP(\bm\word)|P) \big]
    + \Var\big[ \esper(\OccMP(\bm\word)|P) \big].\]
Note that $B$ is independent of $P$, so that $\esper(B|P)$ is the {\em constant} random variable
a.s. equal to $\esper(B)$. This implies that
\[ \Var\big[ \esper(\OccMP(\bm\word)|P) \big] 
= \Var\big[ \esper(B) + \esper(C|P) \big] = \Var\big[\esper(C|P) \big].\]
In particular, we get an initial bound
\begin{equation}
  v(M) \ge \Var\big[\esper(C|P) \big].
  \label{eq:InitialBound}
\end{equation}
On the other hand, from Cauchy-Swartz and Jensen inequalities,
since $B$ and $P$ are independent,
we have
\[ \Big| \esper\big[\Cov(B,C |P)\big] \Big|
\le \esper\big[ \sqrt{\Var(B)} \sqrt{\Var(C|P)} \big] 
\le \sqrt{\Var(B)} \, \sqrt{\esper[\Var(C|P) ]}.\]
Expanding $\Var(\OccMP(\bm\word)|P)=\Var(B+C|P)$ by bilinearity, we find that
\begin{align*}
  \esper\big[ \Var(\OccMP(\bm\word)|P) \big] &
= \Var(B) + 2\esper\big[\Cov(B,C |P)\big] + \esper\big[\Var(C|P)\big] \\
&\ge \Var(B) -2 \sqrt{\Var(B)} \sqrt{\esper[\Var(C|P)]}.
\end{align*}
Note that $\Var(B)=v(M')$ since $B=\OccMP(\bm\word')$.
Summing up, we get the following recursive inequality on $v(M)$:
\begin{equation}
  v(M) \ge v(M') \, \bigg( 1- 2\sqrt{\tfrac{ \esper[\Var(C|P)]}{v(M')} }\, \bigg)
  + \Var\big[\esper(C|P) \big]. 
  \label{eq:RecursiveInequality}
\end{equation}

\subsubsection{Analysing the initial bound \eqref{eq:InitialBound}}
Recall that $C$ counts the number of occurrences of $\tau$ in $\bm\word$ that use the position $P$.
Since the letter at position $P$ is the smallest one in $\bm\word$,
it should correspond to $1$ in the pattern $\tau$.
Formally, if $r$ is the index such that $\tau(r)=1$,
an occurrence $(i_1,\dots,i_\ell)$ of $\tau$ in $\bm\word$ using the position $P$
must satisfy $i_r=P$.
We therefore have, conditionally on $P$,
\begin{equation}
   C=\sum_{i_1 < \dots <i_{r-1}<P \atop i_\ell> \dots >i_{r+1}>P} 
   \left[ \sum_{j_\ell > \dots >j_2 >j_1=j_0}
X_{i_1}^{j_{\tau(1)}} \dots X_{i_\ell}^{j_{\tau(\ell)}} \right]. 
\label{eq:C_Condition_P}
\end{equation}
Note that the factor $X_{i_r}^{j_{\tau(r)}}=X_P^{j_0}=1$ because of the condition on the summation index.
For any $i_1,\dots,i_{r-1},i_{r+1},\dots,i_\ell$
we have
\begin{multline}
  \esper\big[X_{i_1}^{j_{\tau(1)}} \dots X_{i_{r-1}}^{j_{\tau(r-1)}}\, X_{i_{r+1}}^{j_{\tau(r+1)}} 
  \dots X_{i_\ell}^{j_{\tau(\ell)}}|P\big] \\
= \esper\big[(X')_{i_1}^{j_{\tau(1)}} \dots (X')_{i_{r-1}}^{j_{\tau(r-1)}}\, (X')_{i_{r+1}-1}^{j_{\tau(r+1)}} 
\dots (X')_{i_\ell-1}^{j_{\tau(\ell)}} \big], 
\label{eq:X_Xprime}
\end{multline}
where the variables $(X')_i^j$ refer to the multiset permutation $\bm\word'$.
In particular, the right-hand side is a quantity $F(j_2,\dots,j_\ell)$ that depends neither on $P$,
nor $i_1,\dots,i_\ell$ (the uniform random multiset permutation $\bm\word'$ is invariant by re-indexing).
Since the number of choices for the indices $i_1,\dots,i_{r-1},i_{r+1},\dots,i_\ell$ 
is $\binom{P-1}{r-1}\binom{n-P}{\ell-r}$,
we have
\[\esper(C|P) = \binom{P-1}{r-1}\binom{n-P}{\ell-r}
\left(\sum_{j_\ell > \dots >j_2 >j_1=j_0} F(j_2,\dots,j_\ell) \right).\]
We take the variance of this function of the random variable $P$
(the sum is independent of $P$, i.e. deterministic):
\begin{equation}
  \Var\big[  \esper(C|P) \big] =
\left(\sum_{j_\ell > \dots >j_2 >j_1=j_0} F(j_2,\dots,j_\ell) \right)^2 
\, \Var\left( \binom{P-1}{r-1}\binom{n-P}{\ell-r} \right).
\label{VarExpC}
\end{equation}
Since $P$ is uniformly distributed in $\{1,\dots,n\}$,
we see easily that
the variance of the product of binomials is of order $n^{2\ell-2}$.
Moreover, \cref{eq:joint_moments_Mset} gives us 
(recall that the $j_t$ are distinct here)
\[ F(j_2,\dots,j_\ell) = \frac{a_{j_2} \dots a_{j_t}}{(n-1)\dots(n-\ell+1)}
\ge {n^{-\ell+1}} \prod_{t=2}^\ell a_{j_t}.\]
Therefore the sum in \eqref{VarExpC} is bigger than $e_{\ell-1}(M\setminus{(j_0)})\, n^{-\ell+1}$,
where $M\setminus{(j_0)}$ is obtained form $M$ by removing {\em all} copies of $j_0$.
Note that, if a sequence $M^{(m)}$ of multiset partition is $\ell$ regular, 
then, $e_{\ell-1}(M\setminus{(j_0)}) = \Theta(n^{\ell-1})$.
We conclude that 
there exist $\Ka,\Kab>0$ such that
\begin{equation}
  \Var\big[  \esper(C|P) \big] \ge \Ka (e_{\ell-1}(M\setminus{(j_0)}))^2 \ge \Kab n^{2\ell-2}.
  \label{eq:Tech7}
\end{equation}
In particular, using \eqref{eq:InitialBound}, we have
\begin{equation}
  v(M) \ge \Kab n^{2\ell-2}.
  \label{eq:InitialBoundExplicit}
\end{equation}

\subsubsection{Analysing the recursive inequality}
The lower bound \eqref{eq:InitialBoundExplicit} is not sufficient to apply \cref{Thm:WDG} directly.
We shall use the recursive inequality \cref{eq:RecursiveInequality} to improve it.
To this end, we first need to analyse the term $\esper\big[\Var(C|P)\big]$.

\begin{lemma}
Let $M^{(m)}$ be an $\ell$-regular sequence of multisets.
There exist $\Kb,\Kbb>0$ such that
\[\esper\big[\Var(C|P)\big] \le \Kb\, e_{\ell-1}(M\setminus{(j_0)}) e_{\ell-2}(M\setminus{(j_0)})
\le \Kbb n^{2\ell-3}.\]
  \label{lem:Tech}
\end{lemma}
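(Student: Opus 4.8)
The plan is to expand $\Var(C \mid P)$ directly using the explicit expression \eqref{eq:C_Condition_P} for $C$ conditionally on $P$, and to bound the resulting sum of covariances of the monomials $X_{i_1}^{j_{\tau(1)}}\cdots X_{i_\ell}^{j_{\tau(\ell)}}$. Since $C$ is a sum over position-tuples $(i_1,\dots,i_\ell)$ with $i_r = P$ and letter-tuples $(j_1,\dots,j_\ell)$ with $j_1 = j_0$, the conditional variance $\Var(C\mid P)$ is a double sum over two such tuples, say indexed by $\beta = (i_t,j_t)_t$ and $\beta' = (i'_t,j'_t)_t$, of $\Cov(\bm Y_\beta, \bm Y_{\beta'} \mid P)$ where $\bm Y_\beta$ denotes the corresponding monomial in the variables $X'$ of $\bm\word'$ (using the re-indexing \eqref{eq:X_Xprime}). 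The key point is that this covariance vanishes unless $\beta$ and $\beta'$ share a common position or a common letter: indeed, the weighted dependency graph $(G^M)^\ell$ of \cref{thm:WDG_MSet,prop:WDG_Products} has edge-weight $0$ (non-edge) between monomials sharing no index, so by property \ref{item:ind_CumZero} of cumulants the covariance is then exactly $0$. (Alternatively this is immediate from \cref{eq:joint_moments_Mset}, since the joint moment then factors.)

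The next step is to estimate the surviving terms. First I would handle the contribution of pairs $(\beta,\beta')$ that share at least one position (beyond the forced $i_r = i'_r = P$) or at least one letter. For the covariance itself, I would use the crude bound $|\Cov(\bm Y_\beta, \bm Y_{\beta'}\mid P)| \le \esper[\bm Y_\beta \bm Y_{\beta'} \mid P] \le \esper[\bm Y_\beta \mid P]\cdot O(1)$ together with \cref{eq:joint_moments_Mset}, giving something of order $n^{-\#\text{(distinct letters)}}\prod a_{j}$ times a bounded factor; the details mirror the three-case analysis of $T_h$ done earlier in \cref{ssec:proof_MSet}. Counting the tuples: the first tuple $\beta$ has $r$ of its positions and $\ell-1$ of its free letters to choose (the position $i_r = P$ and letter $j_1 = j_0$ being fixed), contributing $O(n^{\ell-1})$ position-choices and, after summing $F$ over letters, $n^{-(\ell-1)}e_{\ell-1}(M\setminus(j_0))$; the constraint that $\beta'$ shares a position or a letter with $\beta$ then costs one power of $n$ in the position count or forces one letter of $\beta'$ to lie in a bounded set, replacing $e_{\ell-1}$ by $e_{\ell-2}$ in the letter sum for $\beta'$. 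Either way the product is $O\big(e_{\ell-1}(M\setminus(j_0))\,e_{\ell-2}(M\setminus(j_0))\big)$, which by \cref{lem:Bound_Elem} and $\ell$-regularity is $O(n^{\ell-1}\cdot n^{\ell-2}) = O(n^{2\ell-3})$. Finally, taking $\esper[\,\cdot\mid P]$ and then $\esper$ over $P$ only multiplies by $O(1)$ since all bounds are uniform in $P \in \{1,\dots,n\}$, so the same estimate holds for $\esper[\Var(C\mid P)]$.

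The main obstacle I anticipate is the bookkeeping in the case analysis: one must be careful that the monomials involve $X'$-variables with the \emph{shifted} positions $i_t$ or $i_t - 1$ from \eqref{eq:X_Xprime}, so "$\beta$ and $\beta'$ share a position" must be interpreted after this shift, and whether $r \in T_0$ (i.e.\ whether the shared/forced position is the one equal to $P$) affects whether one gains a factor $n$ or $1/n$ — exactly as in the third case of the $T_h$ computation. A secondary subtlety is ensuring the $O(1)$ constants are genuinely independent of $M^{(m)}$ and of $P$ and $j_0$; this follows because every estimate used is either the uniform bound \cref{eq:joint_moments_Mset} or \cref{lem:Bound_Elem}, both of which are uniform, and because $\ell$-regularity guarantees $e_{\ell-1}(M\setminus(j_0)) = \Theta(n^{\ell-1})$ with constants depending only on $K$ and $\ell$. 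Once the bound $\esper[\Var(C\mid P)] \le \Kb\, e_{\ell-1}(M\setminus(j_0))\,e_{\ell-2}(M\setminus(j_0))$ is in hand, the second inequality of the lemma is just \cref{lem:Bound_Elem} plus regularity.
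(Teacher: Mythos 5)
Your overall skeleton is the paper's: expand $\Var(C|P)$ as a double sum of covariances of the monomials in the $X'$-variables and run the three-case analysis of \cref{ssec:proof_MSet}, with uniformity in $P$ at the end. But your treatment of the covariances themselves contains a genuine error. You claim that $\Cov(\bm Y_\beta,\bm Y_{\beta'}\mid P)$ vanishes when $\beta$ and $\beta'$ share no position and no letter, justifying this by saying that $(G^M)^\ell$ has a non-edge there and that \cref{eq:joint_moments_Mset} factorizes. Neither is true: in \cref{thm:WDG_MSet} the weight between variables sharing no index is $1/n$, not $0$ (the graph is complete), and the joint moments do not factor -- e.g.\ $\E[X_1^1X_2^2]=\tfrac{a_1a_2}{n(n-1)}\neq\tfrac{a_1}{n}\tfrac{a_2}{n}$. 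The non-independence of occurrences in disjoint positions is precisely why weighted dependency graphs are used in this paper at all. Quantitatively these ``disjoint'' pairs cannot be discarded: bounded correctly (via the weight $1/n$) they contribute $\O\big(e_{\ell-1}(M\setminus(j_0))^2/n\big)$, which under $\ell$-regularity is $\Theta(n^{2\ell-3})$, i.e.\ of the same order as the bound you are proving; your argument as written simply omits them. (They are harmless once you keep the $1/n$ weight, since $e_{\ell-1}/n=\O(e_{\ell-2})$ by \cref{lem:Bound_Elem}.)

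A second, related gap: for pairs sharing only an upper index $j^*$ you propose the crude bound $|\Cov(\bm Y_\beta,\bm Y_{\beta'}\mid P)|\le \E[\bm Y_\beta\bm Y_{\beta'}\mid P]$. This drops the factor $W=1/a_{j^*}$ and the resulting contribution is $\O\big(\max_j a_j\cdot e_{\ell-1}e_{\ell-2}\big)$; since $\ell$-regularity allows a single multiplicity of order $\Theta(n)$, this only gives $\O(n^{2\ell-2})$, which is useless here -- the exponent $2\ell-3$ is exactly what makes the recursion \eqref{eq:RecursiveInequalityExplicit} work. The cancellation between $\E[\bm Y_\beta\bm Y_{\beta'}]$ and $\E[\bm Y_\beta]\E[\bm Y_{\beta'}]$ must be exploited, and this is what the paper does: it applies \cref{thm:WDG_MSet} to $M'$ together with \cref{prop:WDG_Products} to get the uniform bound $|\Cov|\le K\,\bm\Psi\,W$ for \emph{all} pairs (weights $1$, $1/a_j$, $1/n$), and then the $T_h$-style case analysis yields $\O\big(e_{\ell-1}(M\setminus(j_0))\,e_{\ell-2}(M\setminus(j_0))\big)$ uniformly in $P$. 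So the fix is to replace both your vanishing claim and your crude bound by that single weighted-dependency-graph covariance estimate; the rest of your bookkeeping (shifted indices, uniformity in $P$ and $M$, and \cref{lem:Bound_Elem} plus regularity for the final $\Kbb n^{2\ell-3}$) is fine.
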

\begin{proof}
We start from \cref{eq:C_Condition_P,eq:X_Xprime}. 
Conditionally on $P$, we have (note that the indices $i_t$ for $t \ge r+1$ below are shifted by $1$,
compared to \cref{eq:C_Condition_P,eq:X_Xprime}):
\[
   C=\sum_{{i_1 < \dots <i_{r-1}<P \atop i_\ell> \dots >i_{r+1} \ge P} \atop
   j_\ell > \dots >j_2 >j_1=j_0}
 (X')_{i_1}^{j_{\tau(1)}} \dots (X')_{i_{r-1}}^{j_{\tau(r-1)}}\, (X')_{i_{r+1}}^{j_{\tau(r+1)}} 
\dots (X')_{i_\ell}^{j_{\tau(\ell)}}.
 \]
 In the sequel, we use $\bm i$ and $\bm j$ to represent lists $(i_1,\dots,i_{r-1},i_{r+1},\dots,i_\ell)$
 and $(j_2,\dots,j_\ell)$ as in the above summation index.
 We also write $(\bm X')_{\bm i}^{\bm j}$ for the corresponding monomial.
We have
\[\Var(C|P) = \sum_{\bm i,\bm{\tilde i},\bm j,\bm{\tilde j}} \Cov\left( (\bm X')_{\bm i}^{\bm j},
(\bm X')_{\bm {\tilde i}}^{\bm {\tilde j}} \right).\]
Since $\bm \word'$ is a uniform random multiset permutation of $M'$,
the variable $(\bm X')_{\bm i}^{\bm j}$ admit a $(\bm \Psi,\bD)$-weighted dependency graph
(by \cref{Thm:CLT_Multiset,prop:WDG_Products}) and we have that, for some constant $\Kb>0$
\[\Big|\Cov\left( (\bm X')_{\bm i}^{\bm j},
(\bm X')_{\bm {\tilde i}}^{\bm {\tilde j}} \right) \Big|
\le \Kb \, \bm \Psi \Big( (\bm X')_{\bm i}^{\bm j},             
(\bm X')_{\bm {\tilde i}}^{\bm {\tilde j}} \Big)
\, W\left( (\bm X')_{\bm i}^{\bm j},             
(\bm X')_{\bm {\tilde i}}^{\bm {\tilde j}} \right).\]
With the same case distinction as in \cref{ssec:proof_MSet}
on whether $(\bm X')_{\bm i}^{\bm j}$ and $(\bm X')_{\bm {\tilde i}}^{\bm {\tilde j}}$
share a lower index, an upper index or no index at all,
we can prove that
\[ |\Var(C|P)| \le \Kb \, e_{\ell-1}(M\setminus{(j_0)}) \, e_{\ell-2}(M\setminus{(j_0)}).\]
The upper bound is independent of $P$, so that
\[\esper\big[\Var(C|P)\big] \le \Kb\, e_{\ell-1}(M\setminus{(j_0)}) \, e_{\ell-2}(M\setminus{(j_0)}).\]
For $\ell$-regular sequences of multisets,
the upper bound is smaller than $\Kbb n^{2\ell-3}$ (for some $\Kbb>0$),
concluding the proof of the lemma.
\end{proof}
Plugging in \cref{eq:InitialBoundExplicit,eq:Tech7,lem:Tech} 
in \cref{eq:RecursiveInequality}, 
we get that, for some constant $\Kc>0$,
\begin{equation}
   v(M) \ge v(M') ( 1 - \Kc n^{-1/2})+\Kab n^{2\ell-2}. 
   \label{eq:RecursiveInequalityExplicit}
 \end{equation}
We denote $M_{(i)}=((M')\cdots)'$ the partition obtained by removing
the $i$ smallest parts of $M$ (counting parts with repetitions).
For $i \le \Kd n^{1/2}$ (where $\Kd$ is a positive constant that will be determined later), 
the sequence $M_{(i)}$ is still $\ell$-regular.
Therefore, we can apply \eqref{eq:RecursiveInequalityExplicit} and we have: for 
\[ v(M_{(i)}) \ge v(M_{(i+1)}) ( 1 - \Kc (n-i)^{-1/2})+\Kab (n-i)^{2\ell-2}.\]
We start from the initial inequality $v(M_{(\Kd \sqrt{n})}) \ge \Kab (n-\Kd \sqrt{n})^{2\ell-2}$
and iterate the above recursive inequality: we get
\[ v(M) \ge \sum_{i=0}^{\Kd \sqrt{n}-1} \Kab (n-i)^{2\ell-2} (1 - \Kc (n-i)^{-1/2}) \cdots (1 - \Kc n^{-1/2})\]
For $\Kd$ sufficiently small, the product is bounded away from $0$, so that each of the $\Theta(\sqrt{n})$ terms in the sum
behaves as $\Theta(n^{2\ell-2})$.
Therefore, there exists a constant $\Kv>0$ such that
\begin{equation}
   v(M) \ge \Kv n^{2\ell-3/2},
   \label{eq:FinalInequalityVaraince}
 \end{equation}
 which is exactly what we wanted to prove.

 \begin{remark}
   \label{rmk:ImprovingVarianceBound}
   Plugging in the final inequality \eqref{eq:FinalInequalityVaraince} in \eqref{eq:InitialBoundExplicit},
   we could improve \eqref{eq:RecursiveInequalityExplicit} to
   \[   v(M) \ge v(M') ( 1 - \Kc n^{-\bm{3/4}})+\Kab n^{2\ell-2}. \]
   Then, arguing as above with $i \le \Kdb n^{3/4}$, we have $v(M) \ge \Keb n^{2\ell-5/4}$
   (for some $\Keb>0$).
   Iterating this argument shows that for any $\eps>0$, 
   there exists $\Kec>0$ such that $v(M) \ge \Kec\, n^{2\ell-1-\eps}$.
   However, the bound $v(M) \ge \Kv n^{2\ell-3/2}$ given above is sufficient to apply our normality criterion.
 \end{remark}

 \subsubsection{Comparison with an upper bound}
 \label{ssec:discussion_variance_bounds}

 \cref{prop:WDG_UpperBound_Var} and the estimates $R=\O(e_\ell(M))$, $T_1=\O(e_{\ell-1}(M))$
 from \cref{ssec:proof_MSet}
yield the following upper bound on the variance,
which is valid without the regularity hypothesis.
\begin{proposition}
  \label{prop:UpperBoundVarMP}
  There exists a constant $\Ku >0$, such that for each multiset permutation
    $M=\{1^{a_1},2^{a_2},\dots\}$, we have  
    \[\Var(\OccMP) \le \Ku e_{\ell}(M) e_{\ell-1}(M). \]
\end{proposition}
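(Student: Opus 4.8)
The plan is to read the bound off from \cref{prop:WDG_UpperBound_Var}, applied to the decomposition \eqref{eq:OccPiGoodDecomposition} of $\OccMP$ and to the weighted dependency graph $(\WDep^M)^{\ell}$ furnished by \cref{thm:WDG_MSet,prop:WDG_Products}: that proposition gives $\Var(\OccMP)=\O(\R\,T_1)$, with implicit constant depending only on $\tau$. So everything reduces to the observation that the two estimates $\R=\O(e_\ell(M))$ and $T_1=\O(e_{\ell-1}(M))$ derived in \cref{ssec:proof_MSet} in fact hold for \emph{every} finite multiset $M$, and not merely along an $\ell$-regular sequence. For $\R$ this is immediate, since $\R=\binom n\ell e_\ell(M)/n^{\ell}\le e_\ell(M)/\ell!$ exactly.

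For $T_1$ I would revisit the three-case estimate of \cref{ssec:proof_MSet}: it bounds the sum defining $T_1$ by a sum of a bounded number (depending only on $\ell$) of terms, each of the form $\O(n^{-1}e_d(M))$ with $0\le d\le\ell$ or $\O(e_d(M))$ with $0\le d\le\ell-1$, the implicit constants depending only on $\ell$. It therefore suffices to check that each such term is $\O(e_{\ell-1}(M))$ for an arbitrary $M$. If $e_\ell(M)=0$ then $\#M<\ell$, the pattern $\tau$ has no occurrence, $\OccMP\equiv0$ and $\Var(\OccMP)=0$, so the assertion is trivial; hence assume $e_\ell(M)>0$, i.e.\ $\#M\ge\ell$. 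Then the $\ell$ largest multiplicities $b_1,\dots,b_\ell$ of $M$ are all positive, so $n_{(j)}=n-b_1-\dots-b_j\ge b_{j+1}+\dots+b_\ell\ge\ell-j\ge1$ for $1\le j\le\ell-1$. \cref{lem:Bound_Elem} then gives, for $1\le d\le\ell-1$,
\[ e_d(M)\le n\,n_{(1)}\cdots n_{(d-1)}\le n\,n_{(1)}\cdots n_{(\ell-2)}\le(\ell-1)!\,e_{\ell-1}(M), \]
the middle step using $n_{(d)},\dots,n_{(\ell-2)}\ge1$; likewise $n^{-1}e_\ell(M)\le n_{(1)}\cdots n_{(\ell-1)}\le n\,n_{(1)}\cdots n_{(\ell-2)}\le(\ell-1)!\,e_{\ell-1}(M)$ via $n_{(\ell-1)}\le n$; and any leftover term with $d=0$ equals $e_0(M)=1\le e_{\ell-1}(M)$ since $e_{\ell-1}(M)\ge1$. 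Hence every term is $\O(e_{\ell-1}(M))$ with constant depending only on $\ell$, and so is their bounded sum, i.e.\ $T_1=\O(e_{\ell-1}(M))$.

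Plugging $\R=\O(e_\ell(M))$ and $T_1=\O(e_{\ell-1}(M))$ into $\Var(\OccMP)=\O(\R\,T_1)$ yields $\Var(\OccMP)=\O(e_\ell(M)\,e_{\ell-1}(M))$, which is the claimed inequality. I do not expect a genuine obstacle here: the whole content is the remark that the two inputs of \cref{ssec:proof_MSet} invoked $\ell$-regularity only to streamline the bookkeeping, whereas \cref{lem:Bound_Elem} already encodes the elementary comparisons among the $e_d(M)$ that make them valid unconditionally. The only mild care needed is in dispatching the degenerate case $e_\ell(M)=0$ and in checking that $n_{(1)},\dots,n_{(\ell-1)}\ge1$ as soon as $\#M\ge\ell$ (in particular $\ell=1$ is trivial, since then $\OccMP=n$ is deterministic).
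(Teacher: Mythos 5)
Your proof is correct and takes essentially the same route as the paper, whose argument is precisely to plug the estimates $R=\O(e_\ell(M))$ and $T_1=\O(e_{\ell-1}(M))$ from \cref{ssec:proof_MSet} into \cref{prop:WDG_UpperBound_Var} applied to the weighted dependency graph from \cref{thm:WDG_MSet,prop:WDG_Products}. Your extra verification that these estimates hold for an arbitrary multiset $M$ (disposing of the degenerate case $e_\ell(M)=0$, where $\OccMP\equiv 0$, and using $n_{(j)}\ge 1$ for $j\le \ell-1$ when $\#M\ge\ell$ together with \cref{lem:Bound_Elem}) is exactly the point the paper leaves implicit when asserting validity ``without the regularity hypothesis,'' so it completes rather than departs from the paper's proof.
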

We note in the case $\tau=21$, {\em i.e.}, when we are interested in inversions
in random multiset permutations, this upper bound is tight (up to a multiplicative constant):
see \cite[Eq. (1.10) or Lemma 3.1]{canfield11mahonian}.
It is natural to ask whether this is also the case for longer patterns.
\begin{question}
  \label{question}
  Fix a pattern $\tau$. Does there exist a constant $\Kl >0$ (depending on $\tau$),
  such that for each multiset permutation
      $M=\{1^{a_1},2^{a_2},\dots\}$, we have                               
      \begin{equation}
        \Var(\OccMP) \ge \Kl\, e_{\ell}(M) e_{\ell-1}(M) \ ?
        \label{eq:QuestionLowerBound}
      \end{equation}
\end{question}
An affirmative answer to this question would imply that the central limit in \cref{Thm:CLT_Multiset}
holds under the less restrictive condition that the sequence of multiset permutations
satisfy $n_{(\ell-1)} \to \infty$ ($n_{(j)}$ is defined in \eqref{eq:DefNj}).
This condition is easily seen to be necessary for having a central limit theorem
(see \cite[Section 5]{canfield11mahonian} for the case of inversions).

For a regular sequence $M=M^{(m)}$ of multisets, 
we have the estimate $e_{\ell}(M) e_{\ell-1}(M)=\Theta(n^{2\ell-1})$,
and we can prove $\Var(\OccMP) \ge \Kec\, n^{2\ell-1-\eps}$ 
for arbitrarily small $\eps>0$; see \cref{rmk:ImprovingVarianceBound}.
Therefore, in this case, the suggested lower bound \eqref{eq:QuestionLowerBound} holds,
at least up to subpolynomial factors.

\section{Arc patterns in set partitions}
\label{part:set_partitions}
In this section, we prove \cref{Thm:CLT_SetPart}.
We refer to \cref{ssec:ThmSetPart} for notation.

\subsection{Stam's urn model and the weighted dependency graph}
\label{ssec:StamWDG}
Stam's urn model \cite{stam1983RandomSetPartition}
gives a simple way to uniformly sample from $\ptn([n])$. It works as follows:
the first step consists in picking the number of urns $M$ according to the distibution 
\begin{equation}
  \P(M=m)=\frac{1}{eB_n}\frac{m^n}{m!},
  \label{eq:distribution_M}
\end{equation}
where $B_n=|\ptn([n])|$ is the $n$-th Bell number.
This is indeed a probability measure, thanks to Dobi\'nski's formula; 
see \cite{rota64partitions} for an insightful proof.

In the second step, we drop each number $i\in[n]$ into one of $M$ urns with uniform probability $1/M$.
We denote by $\pi$ the random set partition of $[n]$ in which $i$ and $j$ are in the same block
if and only if they were dropped into the same urn.
This construction of a random set partition will be referred to as Stam's urn model.

\begin{proposition}[\cite{stam1983RandomSetPartition}]
  The random set partition $\pi$ constructed by Stam's urn model is uniformly distributed on $\ptn([n])$.
  Moreover, the number of empty urns in the process has law $\mathrm{Poisson}(1)$ and
  is independent from $\pi$.
  \label{prop:Stam}
\end{proposition}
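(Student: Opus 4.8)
The plan is to prove \cref{prop:Stam} in three parts: first the uniformity of $\pi$, then the Poisson law of the number of empty urns, and finally the independence of these two objects. The key idea throughout is to keep track of the \emph{ordered} structure — a function $f\colon[n]\to[m]$ recording which urn each element goes to — and then forget the labels of the urns.

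\textbf{Step 1: Uniformity of $\pi$.} Fix a set partition $\lambda\in\ptn([n])$ with $k$ blocks. I would compute $\P(\pi=\lambda)$ by summing over the number of urns $m$. Conditionally on $M=m$, the element-to-urn assignment is a uniform random function $f\in[m]^{[n]}$, so each of the $m^n$ functions has probability $m^{-n}$. The function $f$ induces the partition $\lambda$ precisely when $f$ is constant on each block of $\lambda$ and takes $k$ \emph{distinct} values; the number of such $f$ is $m(m-1)\cdots(m-k+1)=(m)_k$, the falling factorial (choosing an ordered list of $k$ distinct urns for the $k$ blocks). Hence
\[
\P(\pi=\lambda)=\sum_{m\ge 1}\frac{1}{eB_n}\frac{m^n}{m!}\cdot\frac{(m)_k}{m^n}
=\frac{1}{eB_n}\sum_{m\ge k}\frac{1}{(m-k)!}
=\frac{1}{eB_n}\cdot e=\frac{1}{B_n},
\]
which is independent of $\lambda$ (it depends on $\lambda$ only through $k$, but the final value does not involve $k$). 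This shows $\pi$ is uniform on $\ptn([n])$.

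\textbf{Steps 2 and 3: Poisson law and independence.} I would prove these together by computing the joint law of $(\pi, E)$, where $E$ is the number of empty urns. Fix $\lambda$ with $k$ blocks and an integer $e_0\ge 0$. Conditionally on $M=m$ (which requires $m\ge k+e_0$), the number of functions $f\in[m]^{[n]}$ that induce $\lambda$ and leave exactly $e_0$ urns empty is: choose which $e_0$ of the $m$ urns are empty ($\binom{m}{e_0}$ ways), then choose an ordered assignment of the $k$ blocks to the remaining $m-e_0$ urns hitting all... no — rather, among the non-empty urns there are exactly $k$ of them (one per block), so we choose an ordered list of $k$ distinct urns among the $m-e_0$ non-designated ones: $(m-e_0)_k$ ways. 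Thus the count is $\binom{m}{e_0}(m-e_0)_k$, and
\[
\P(\pi=\lambda,\,E=e_0)=\sum_{m\ge k+e_0}\frac{1}{eB_n}\frac{m^n}{m!}\cdot\frac{\binom{m}{e_0}(m-e_0)_k}{m^n}
=\frac{1}{eB_n}\sum_{m\ge k+e_0}\frac{(m-e_0)_k}{e_0!\,(m-e_0)!}.
\]
Writing $m'=m-e_0$ and noting $(m')_k/(m')!=1/(m'-k)!$, the sum becomes $\frac{1}{e_0!}\sum_{m'\ge k}\frac{1}{(m'-k)!}=\frac{e}{e_0!}$, so $\P(\pi=\lambda,\,E=e_0)=\frac{1}{B_n}\cdot\frac{e^{-1}}{e_0!}$. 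This factors as $\P(\pi=\lambda)\cdot\frac{e^{-1}}{e_0!}$, which simultaneously identifies the marginal of $E$ as $\mathrm{Poisson}(1)$ and establishes independence of $\pi$ and $E$.

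\textbf{Main obstacle.} The computations are elementary manipulations of Dobiński-type sums, so the only real subtlety is bookkeeping: getting the combinatorial counts $(m)_k$ and $\binom{m}{e_0}(m-e_0)_k$ exactly right (in particular, correctly handling the constraint that a function inducing $\lambda$ must take exactly $k$ values, and that "empty urn" is a property of the full set of $m$ urns, not of the image of $f$), and making sure the re-indexing $m'=m-e_0$ lines up the summation ranges. No analytic difficulty arises since Dobiński's formula $\sum_{m\ge 0}\frac{m^n}{m!}=eB_n$ is assumed, and the shifted sums $\sum_{m\ge k}\frac{1}{(m-k)!}=e$ are trivial. One could alternatively cite Stam's original paper directly, but the self-contained computation above is short enough to include.
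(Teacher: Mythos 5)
Your Step 1 is correct, and since the paper itself does not reprove this proposition (it simply cites Stam), a short self-contained argument of the kind you propose is reasonable. The genuine problem is in your Steps 2 and 3: the count $\binom{m}{e_0}(m-e_0)_k$ does not count the functions $f\in[m]^{[n]}$ that induce $\lambda$ and leave \emph{exactly} $e_0$ urns empty. If $f$ induces the partition $\lambda$ with $k$ blocks, it takes exactly $k$ distinct values, so exactly $m-k$ of the $m$ urns are empty; hence the event $\{\pi=\lambda,\ E=e_0\}$ forces $M=k+e_0$, and for every $m>k+e_0$ the correct count is $0$. Your expression instead counts assignments in which a chosen set of $e_0$ urns is empty while ignoring the other $m-k-e_0$ urns that are then also empty. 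This is not a cosmetic slip: your displayed sum evaluates to $\frac{1}{eB_n}\cdot\frac{e}{e_0!}=\frac{1}{B_n\,e_0!}$, which contradicts the value $\frac{1}{B_n}\cdot\frac{e^{-1}}{e_0!}$ you assert in the next clause (that last equality is an arithmetic jump that masks the counting error), and it fails the sanity check that summing over $e_0\ge 0$ must return $\P(\pi=\lambda)=1/B_n$: your formula sums to $e/B_n$.

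The fix is immediate and makes the argument shorter than what you wrote. On the event $\{\pi=\lambda\}$ with $\lambda$ having $k$ blocks, one has $E=M-k$ deterministically, so only the term $m=k+e_0$ contributes, and $\P(\pi=\lambda,\,E=e_0)=\P(M=k+e_0)\cdot\frac{(k+e_0)_k}{(k+e_0)^n}=\frac{1}{eB_n}\cdot\frac{(k+e_0)_k}{(k+e_0)!}=\frac{1}{eB_n}\cdot\frac{1}{e_0!}=\frac{1}{B_n}\cdot\frac{e^{-1}}{e_0!}$. This single identity factorizes as a function of $\lambda$ times a function of $e_0$, so it recovers the uniformity of $\pi$, identifies the law of $E$ as $\mathrm{Poisson}(1)$, and proves the independence of $E$ and $\pi$ all at once; your Step 1 can then be kept as is or absorbed into this computation by summing over $e_0$.
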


For $1 \le i<j \le n$, we consider the random variable
\[X_{ij}=\One [\text{there is an arc from $i$ to $j$ in $\pi$}].\]
\begin{theorem}
  \label{thm:WDG_SetPart}
 Consider the weighted graph $G^n$ on vertex-set $A^n$
 with weights
 \[w(X_{ij},X_{i'j'}) = \begin{cases}
   1 &\text{ if }i = i' \text{ or }j = j';\\
   1/n &\text{ otherwise.}
 \end{cases}\]
 Then, for each $n  \ge 1$, the graph $G^n$ is a 
 $(\Psi,\bm{C}_n)$ weighted dependency graph
 for the family $A^n$, where $\Psi$ is the function on multisets
 of elements of $A^n$ defined by
 \[ \Psi(B)=n^{-\#(B)}. \]
 and $(\bm{C}_n)_{n \ge 1}=(C_{r,n})_{r \ge 1,n \ge 1}$ is a doubly indexed sequence of coefficients
 such that for each $r$, we have $C_{r,n}=\tO(1)$ as $n$ tends to infinity.
\end{theorem}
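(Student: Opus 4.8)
The plan is to reduce the bound on mixed cumulants of the $X_{ij}$'s to a \emph{conditional} statement given the number of urns $M$ in Stam's model, and then recombine the pieces via the law of total cumulance. Concretely, write $\kappa(X_{i_1j_1},\dots,X_{i_rj_r})$ and use the identity
\[
  \kappa(Y_1,\dots,Y_r) = \sum_{\pi \in \ptn([r])} \kappa\big( \kappa(Y_t, t\in B \mid M) ; B \in \pi \big),
\]
where the outer $\kappa$ is taken over the randomness of $M$ alone. So the first task is to control the conditional cumulants $\kappa(X_{i_tj_t}, t \in B \mid M = m)$: given $m$ urns, the arc-indicator events become events about which urn each of $1,\dots,n$ lands in, and these are (essentially) independent across the $n$ balls. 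This is the step where a clean quasi-factorization estimate, analogous to \cref{thm:WDG_MSet}, should hold: conditionally on $M = m$, the joint moments $\esper\big[\prod_{t\in\Delta} X_{i_tj_t} \mid M=m\big]$ are explicit products of terms like $\tfrac{1}{m}$ or $(1-\tfrac1m)$ raised to combinatorial powers, and one gets a $(\Psi_m, \const)$-type weighted dependency bound with weights $1$ on shared-index edges and $1/(m-1)$ (or $1/m$) otherwise, times $\prod \esper[X_{i_tj_t}\mid M=m] = \prod m^{-1}(\dots)$. Since $M$ concentrates around $n/\ln n$ by \cref{prop:Stam} and standard Bell-number asymptotics, on the relevant event $1/m = \tO(1/n)$, which will feed the $\tO(1)$ in the final constants.

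**Recombining via total cumulance.** The outer sum over $\pi \in \ptn([r])$ has boundedly many terms (depending only on $r$), so it suffices to bound each summand $\kappa\big( Z_B ; B \in \pi \big)$ where $Z_B := \kappa(X_{i_tj_t}, t\in B \mid M)$ is a function of the single random variable $M$. Two regimes arise. If $|\pi|=1$, the term is $\esper[Z_{[r]}]$; here the conditional bound from the previous step, integrated against the law of $M$ and using the deviation estimates on $M$, directly gives $\tO\big(n^{-\#(B)} \MWST{G^n[B]}\big)$ after checking that the conditional weighted graph dominates $G^n[B]$. If $|\pi|\ge 2$, we must show the $Z_B$'s are \emph{weakly correlated through $M$}: each $Z_B$ is small (of order the conditional-cumulant bound) unless the induced subgraph on $B$ is connected, and moreover $Z_B$ depends on $m$ only through smooth ($C^\infty$ in $1/m$) expressions, so differences $Z_B(m) - Z_B(m')$ carry an extra factor $|1/m - 1/m'| = \tO(1/n)$ on the concentration event; combined with finite-order moment bounds on $M - \esper M$, each such $\pi$-term is lower-order and absorbed. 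The bookkeeping here is exactly the $R$, $T_h$ philosophy turned inward: one shows the "extra" edges needed to connect the blocks $B$ cost at most the $1/n$-weight edges of $G^n$, so $\prod_B \MWST \le \MWST{G^n[B]}$ up to the right powers.

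**Main obstacle.** The delicate point — and the reason \cref{sec:WDG_SetPartition} is a separate section — is the $|\pi| \ge 2$ case: one must quantify, uniformly over all choices of the $2r$ indices, how the dependence of the conditional cumulants on $M$ propagates, and argue that the resulting contributions are not merely $o(1)$ but are bounded by the correct product of $n^{-1}$-weighted spanning-tree weights matching $G^n[B]$. This requires (i) sufficiently strong concentration of $M$ — not just $M = \tTheta(n/\ln n)$ but polynomial-order tail bounds so that moments of $(M - \esper M)$ of every fixed order are $\tO\big((n/\ln n)^{?}\big)$ with the right exponent — and (ii) a careful expansion of $Z_B(M)$ around $M = \esper M$, tracking that each derivative in $1/m$ brings down a factor $\tO(1/n)$ while each block $B$ already contributes its own $\MWST{G^n[B]}$. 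I would organize (i) as a lemma on $\P(|M - n/\ln n| > t)$ from Dobiński/saddle-point estimates, and (ii) as a lemma bounding $\esper\big[\prod_t X_{i_tj_t} \mid M=m\big]$ and its finite differences in $m$. Once both are in hand, the doubly-indexed constants $C_{r,n} = \tO(1)$ emerge by collecting all the $\ln n$ factors from the deviation estimates and the $\esper M$-normalizations; the $n$-dependence of $\const$ is unavoidable precisely because of these logarithmic losses, which is why the theorem is stated with $\bm C_n$ rather than a universal $\const$.
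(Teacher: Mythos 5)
Your overall architecture is the same as the paper's: reduce to bounding $\kappa(X_{i_1j_1},\dots,X_{i_rj_r})$ for distinct indices, exploit the explicit product form of the conditional moments given the number of urns $M$ to get a quasi-factorization bound on the conditional cumulants, and recombine with Brillinger's law of total cumulance together with concentration of $M$. The gap is in your treatment of the partitions $\pi$ with $|\pi|=s\ge 2$. There the summand is a joint cumulant of $s$ functions of the \emph{same} random variable $M$, and you need to beat the naive product bound $\prod_{C\in\pi}\tO(n^{1-2|C|})=\tO(n^{s-2r})$ by a factor $\tO(n^{1-s})$ to reach the target $\tO(n^{1-2r})$. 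The tools you invoke --- smoothness of $Z_B$ in $1/m$, tail bounds, and ``finite-order moment bounds on $M-\esper M$'' --- cannot deliver this. Centering and H\"older (as in \cref{lem:Bound_cum_Holder}) give $|\kappa(Z_{B_1},\dots,Z_{B_s})|\le B_s\prod_i\|Z_{B_i}(M)-Z_{B_i}(m_n)\|_s$, and since $\|M-m_n\|_s=\Theta(\sigma_n)=\tTheta(\sqrt n)$ is the true order of the moments (no better bound exists), each factor gains only $\tO(n^{-1/2})$, i.e.\ a total gain $\tO(n^{-s/2})$. This matches the needed $n^{1-s}$ only at $s=2$; for every $s\ge 3$ it falls short ($n^{-s/2}\gg n^{1-s}$), so these cross terms are not ``lower-order and absorbed'' by concentration alone. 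Even adding the asymptotic normality of $M$ (convergence of moments) only yields $\kappa_s(M)=o(n^{s/2})$, still insufficient.

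What is actually needed, and what the paper proves, is a \emph{cumulant} bound on $M$ of every order: $|\kappa_r(M)|=\O(n)$ (\cref{lem:Bound_CumM}), obtained not from Dobi\'nski/saddle-point estimates but structurally, via \cref{prop:Stam} (the number of empty urns is an independent Poisson) and Harper's theorem that the number of blocks is a sum of $n$ independent Bernoulli variables. Feeding this into Leonov--Shiryaev gives joint-cumulant bounds for powers of $M$, $M^{-1}$ and $Z_n/M$ (\cref{corol:bound_Cumu_Powers}, \cref{prop:Cumu_MInverse}, \cref{corol:bound_Cumu_Powers_MInv_Zn}), and hence the key estimate \eqref{eq:BoundCumuPoly}: a joint cumulant of $s$ polynomial expressions in $(\ell_D, M^{-1}, \sigma_n Z_n/(Mm_n))$ carries exactly the extra factor $\tO(n^{1-s})$. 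The paper then expands each conditional cumulant $\kappa_C(M)$ into such a polynomial plus a uniformly $\tO(n^{1-r})$ error on the event $A_n$ of \cref{lem:Concentration_M}, and applies \eqref{eq:BoundCumuPoly} blockwise. So to complete your proof you must replace ``concentration + moment bounds on $M-\esper M$'' by this quantitative all-orders cumulant control of $M$ (or an equivalent independent-summands representation); without it the $|\pi|\ge 2$ terms genuinely dominate your target bound.
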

The proof is postponed to \cref{sec:WDG_SetPartition}, but let us say a few words on its structure.
We should bound joint cumulants $\kappa(X_{ij}, \, X_{ij} \in B)$ for multiset $B$ of elements $A^n$;
with the same kind of observations as in the proof of \cref{thm:WDG_MSet}, we can assume w.l.o.g.
that $B$ is a (repetition-free) subset of $A^n$.
The difficulty here is that the joint moments of $X_{ij}$ do not have a simple form as in \cref{eq:joint_moments_Mset}.
However, conditionally on $M$, joint moments do have a nice multiplicative expression.
Indeed, a random set partition $\pi$ generated by Stam's urn model contains all the arcs
in some set $B$ if and only if:
\begin{enumerate}
  \item for every arc $(i,j)$ in $B$,
   we drop its endpoint $j$ in the same urn as its starting point $i$;
 \item for every arc $(i,j)$ in $B$, none of the integers between $i$ and $j$
   is dropped in the same urn as $i$;
   equivalently,
   if $g$ is not the endpoint of an arc of $B$,
   it should be dropped in a different urn from the 
   starting points of those arcs in $B$ that go over $g$.
\end{enumerate}
Since all balls are dropped uniformly independently, this happens with probability
\begin{equation}
  \mathbb E\bigg( \prod_{X_{ij} \in B} X_{ij} \big|M \bigg)
  =\frac{1}{M^a} \prod_{g: g \notin \{j,\, X_{ij} \in B\}} \frac{M-a(g)}{M},
   \label{eq:JointMoment_SP}
 \end{equation}
 where $a(g)$ is the number of arcs in $B$ going over $g$.
 Using the multiplicative form of \cref{eq:JointMoment_SP}
 and general results from the theory of weighted dependency graphs, we can show that
 the {\em conditional} joint cumulants $\kappa\big( X_{ij} \in B|M\big)$ are small.
 
 To go back to unconditional cumulants, we use 
 Brillinger's \defn{law of total cumulance}
 \cite{brillinger1969totalcumulance}, which we now recall.
If $X_1,\dots,X_r$ and $Y$ are random variables with finite moments
defined on the same probability space, then
\begin{equation}
   \kappa(X_1,\dots,X_r)
= \sum_\rho \kappa\Big( \kappa(X_i, i \in B | Y), B \in \rho\Big),
\label{eq:TotalCumulance}
\end{equation}
where the sum runs over all set partitions $\rho$ of $[r]$
and the $B$'s are the blocks of $\rho$.

Note that the inner conditional cumulants are functions of the random variable $Y$, 
that is, in our setting, of $M$. We therefore need estimates
for cumulants of particular functions of $M$, which we derive
in \cref{ssec:CumM} as a preliminary to the proof of \cref{thm:WDG_SetPart}.
\medskip

In the rest of this section, we admit \cref{thm:WDG_SetPart} and show how
to deduce \cref{Thm:CLT_SetPart} from it.

\subsection{A lower bound for the variance}
\label{ssec:Var_SetPart}
As in the statement of \cref{Thm:CLT_SetPart},
we fix an arc pattern $\mathcal A$ of length $\ell$ 
with $\arcs$ arcs and denote $\OccSP$ its number of occurrences
in a uniform random set partition of size $n$.
We would like a lower bound on its variance.

To this end, we use the law of total variance, conditioning on the value of 
the number $M$ of urns in Stam's construction:
\[\Var(\OccSP)= \esper\big[ \Var(\OccSP|M) \big] + \Var\big[ \esper(\OccSP|M) \big].\]
Both terms are nonnegative; it turns out that the second one is relatively easy to evaluate
and gives us a good enough lower bound (in fact, this lower bound is optimal
up to logarithmic factors, as explained at the end of \cref{ssec:CLT_SP}).
\begin{lemma}
  $\displaystyle \Var(\OccSP) \ge \Var\big[ \esper(\OccSP|M) \big] 
  = \tTheta\Big( n^{2\ell-2\arcs-1} \Big).$
  \label{lem:LowerBound_Var_SetPart}
\end{lemma}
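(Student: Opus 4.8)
The plan is to compute the quantity $\esper(\OccSP\mid M)$ explicitly as a function of the random variable $M$, and then show that this function varies enough over the typical range of $M$ to force $\Var[\esper(\OccSP\mid M)]=\tTheta(n^{2\ell-2\arcs-1})$. First I would write $\OccSP=\sum_{x_1<\dots<x_\ell}\prod_{(i,j)\in\pa}X_{x_ix_j}$ and use the conditional joint moment formula \eqref{eq:JointMoment_SP}: conditionally on $M$, the probability that all the required arcs are present in the occurrence at positions $x_1<\dots<x_\ell$ equals $M^{-\arcs}\prod_{g}\frac{M-a(g)}{M}$, where $g$ ranges over the positions strictly between the extremities of the arcs of $\pa$ that are not themselves right-endpoints of an arc, and $a(g)$ counts the arcs of $\pa$ straddling $g$. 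Crucially, once we ignore boundary effects near $1$ and $n$, the number of such straddled positions $g$, weighted by $a(g)$, grows linearly with the gaps $x_{i+1}-x_i$; summing over all $\binom{?}{?}$-many choices of the positions produces, after a Riemann-sum / generating-function estimate, an expression of the form $\esper(\OccSP\mid M)=n^{\ell-\arcs}\,\phi(n/M)\,(1+o(1))$ for an explicit smooth nonconstant function $\phi$ on $(0,\infty)$ (roughly a product over arcs of geometric-type sums), together with lower-order corrections.

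Next I would feed in the concentration of $M$. By Stam's model and the analysis of $M$ (the cumulant estimates of \cref{ssec:CumM}), $M$ concentrates around $n/\ln n$ with fluctuations of order $\tTheta(\sqrt{n}/\ln n)$ — more precisely $M=\tTheta(n/\ln n)$ with high probability and $\Var(M)=\tTheta(n/(\ln n)^{?})$. Writing $g(M):=\esper(\OccSP\mid M)$, a first-order Taylor expansion gives $g(M)-\esper g(M)\approx g'(\esper M)\,(M-\esper M)$, hence $\Var[g(M)]\asymp (g'(\esper M))^2\,\Var(M)$, provided $g'$ does not vanish at the relevant point and the Taylor remainder is negligible. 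From the explicit form $g(M)=n^{\ell-\arcs}\phi(n/M)(1+o(1))$ one computes $g'(\esper M)=\tTheta(n^{\ell-\arcs}\cdot n/M^2\cdot\phi'(n/M))=\tTheta\big(n^{\ell-\arcs}\cdot (\ln n)^2/n\big)$ up to the constant $\phi'$, which is nonzero because $\phi$ is genuinely nonconstant (this uses that $\pa$ has at least one arc, i.e. $\arcs\ge 1$; if $\arcs=0$ the statement is $\tTheta(n^{2\ell-1})$ and must be handled by the same scheme with $\OccSP$ a deterministic count... — here one checks $\ell\ge 1$ and that the occurrence count still fluctuates through $M$ because longer gaps are weighted differently, or one falls back on the other term of the law of total variance). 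Combining, $\Var[g(M)]=\tTheta\big(n^{2(\ell-\arcs)}(\ln n)^4/n^2\big)\cdot\tTheta(n/(\ln n)^{c})=\tTheta(n^{2\ell-2\arcs-1})$, absorbing logarithmic factors into the $\tTheta$.

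The main obstacle I anticipate is the asymptotic evaluation of $\esper(\OccSP\mid M)$ as a clean function of $n/M$ with controlled error, and in particular verifying that its derivative at the typical value of $M$ is bounded away from zero on the right polynomial scale. This requires (i) carefully handling the combinatorics of which positions $g$ are straddled by arcs of $\pa$ — this depends on the nesting/crossing structure of $\pa$, not just on $\ell$ and $\arcs$ — and (ii) a uniform error bound over the whole high-probability range of $M$, so that the Taylor expansion and the truncation of the rare event $\{M\notin\tTheta(n/\ln n)\}$ are both legitimate; the contribution of that rare event to $\Var[g(M)]$ must be shown to be $\tO(n^{2\ell-2\arcs-2})$ or smaller using the moment bounds on $M$ and the trivial bound $0\le\OccSP\le\binom{n}{\ell}$. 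Once these estimates are in place, the two-sided bound $\tTheta$ follows, and since $\esper[\Var(\OccSP\mid M)]\ge 0$ the displayed inequality $\Var(\OccSP)\ge\Var[\esper(\OccSP\mid M)]$ is immediate from the law of total variance.
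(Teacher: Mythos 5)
Your overall route is the same as the paper's: lower-bound via the law of total variance conditioning on the number of urns $M$, compute $\esper(\OccSP|M)$ explicitly from Stam's formula \eqref{eq:JointMoment_SP}, and extract the variance from the fluctuations of $M$. The first genuine gap is the claim on which your delta method hinges, namely that the scaling function $\phi$ is nonconstant as soon as $\pa$ has an arc. Carrying out the gap computation (as in \eqref{eq:CondEsper_BigSum}), the dominant part of $\esper(\OccSP|M)$ is $\tfrac{1}{(\ell-t)!}\,n^{\ell-t}M^{t-\arcs}$, where $t$ is the number of gaps $[i-1,i]$ of the pattern covered by at least one arc; in your normalization $\phi(x)=x^{\arcs-t}/(\ell-t)!$. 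This is constant in $M$ precisely when $t=\arcs$, which occurs for perfectly ordinary patterns, e.g.\ $\pa=\{(1,2)\}$ or any pattern all of whose arcs join adjacent pattern indices. In that case the leading term is deterministic, your first-order Taylor term vanishes at the claimed scale, and the fluctuation sits in a subleading monomial (for $\pa=\{(1,2)\}$ the conditional expectation is essentially $n-M$); the paper treats this by subtracting the deterministic leading monomial and applying its variance estimate to the next monomial, whereas your fallbacks (the aside about $\arcs=0$, or retreating to $\esper[\Var(\OccSP|M)]$) do not cover it --- and the latter could in any case not yield the asserted two-sided estimate $\Var[\esper(\OccSP|M)]=\tTheta(n^{2\ell-2\arcs-1})$.

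The second gap is error control. An expansion $\esper(\OccSP|M)=n^{\ell-\arcs}\phi(n/M)(1+o(1))$ is too weak for the purpose: the target variance $n^{2\ell-2\arcs-1}$ is a full power of $n$ below the square of the mean, and an unstructured multiplicative $o(1)$ remainder can by itself contribute variance up to $o(n^{2\ell-2\arcs})$, swamping the main term; uniformity of the error over the high-probability range of $M$ does not fix this. What is actually needed --- and is the technical heart of the paper's \cref{sec:variance_estimate} --- is an expansion of $\esper(\OccSP|M)$ as an explicit Laurent polynomial in $n$ and $M$ plus a remainder whose \emph{variance} is $\tO(n^{2\ell-2\arcs-3})$, the variance of the polynomial part then being computed from joint cumulant bounds for powers of $M$ and $M^{-1}$ rather than from a pointwise Taylor expansion (note also that you evaluate $\phi'$ at the argument $n/M\sim\ln n\to\infty$ as if it were a nonzero constant; this needs justification, though it should only cost logarithmic factors). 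The parts you do have --- the inequality from the law of total variance, the conditional occurrence probability \eqref{eq:Cond_Prob_Occ}, and the truncation on $\{|M-m_n|\le n^{3/4}\}$ --- agree with the paper; the two points above are where the proposal as written would fail.
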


\begin{proof}
We start by introducing notation.
For $i$ in $\{2,\dots,\ell\}$, we denote by $a_i$ the number of 
arcs above the segment $[i-1,i]$ in $\mathcal A$. 
For example if $\mathcal A$ is the arc pattern of \cref{fig:ExArcRepr},
then $a_2=a_5=1$ and $a_3=a_4=2$.
By convention, we set $a_1=0$. 

Recall from \cref{eq:JointMoment_SP} above that
\begin{equation}
  \mathbb P\big( (x_1,\dots,x_\ell) \text{ occurrence of }\mathcal A |M \big)
  =\frac{1}{M^a} \prod_{g: g \notin \{x_j, (i,j) \in \mathcal A\}} \frac{M-a(g)}{M}.  
   \label{eq:Cond_Prob_Occ}
 \end{equation}
The product can be split in two parts.
\begin{itemize}
  \item First, consider factors 
    indexed by $g$ in $\{x_1,\dots,x_\ell\} \setminus \{x_j, (i,j) \in \mathcal A\}$,
    i.e. by elements of the pattern that are not the ending point of an arch.
    There are a fixed number of such $g$, and the corresponding
    numbers $a(g)$ do not depend on $x_1$, \dots, $x_\ell$ nor on $n$.
    Thus the product of such factors
    is a Laurent polynomial of degree $0$ in $M$, say $R_0(M)$,
    independent of $x_1,\dots,x_j$ (and of $n$);
  \item Second, we consider factors corresponding to $g \notin \{x_1,\cdots,x_\ell\}$:
    for each $i$ in $[\ell]$,
    we get $x_{i}-x_{i-1}-1$ factors equal to $\frac{M-a_i}{M}$ (by convention $x_0=0$).
\end{itemize}
Hence \eqref{eq:Cond_Prob_Occ} rewrites as
\[
  \mathbb P\Big( (x_1,\dots,x_\ell) \text{ occurrence of }\mathcal A \big|M \Big)=
\frac{R_0(M)}{M^a} \prod_{i \le \ell} \left( \frac{M-a_i}{M} \right)^{x_i-x_{i-1}-1}.\]
Summing this conditional probability over $x_1<x_2<\cdots<x_\ell \le n$, we get the conditional expectation
of $\OccSP$. For convenience, we rather write this formula with summation indices $y_i=x_i-x_{i-1}$
(with the convention $x_0=0$, {\em i.e.} $y_1=x_1$):
\[
  \esper\big[ \OccSP|M\big] = \frac{R_0(M)}{M^a} \sum_{y_1,\ldots,y_\ell \ge 1 \atop y_1+\ldots+y_\ell \le n}
  \prod_{i \le \ell} \left( \frac{M-a_i}{M} \right)^{y_i-1}.
 \]
We denote $t$ the number of $i$ such that $a_i \ne 0$.
Since the above expression is symmetric in the $a_i$, we may assume as well that $a_1,\cdots,a_t$ are nonzero,
while $a_{t+1}=\cdots=a_\ell=0$. The summand in the above display does not depend on $(y_i, i>t)$,
so that we can write
\begin{equation}
  \esper\big[ \OccSP|M\big] = \frac{R_0(M)}{M^a}
  \sum_{y_1,\ldots,y_t \ge 1 \atop y_1+\ldots+y_t \le n}
  \binom{n-y_1-\cdots-y_t}{\ell-t} \prod_{i \le t} \left( 1-\frac{a_i}{M} \right)^{y_i-1}.
\label{eq:CondEsper_BigSum}
\end{equation}
The next step is to see how the sum behaves when $n \to \infty$,
and $M$ is closed to its expectation \[m_n:=\mathbb E(M) \sim n/\ln(n).\]
We will see later that $M$ concentrates around $m_n$,
see beginning of \cref{ssec:CumM}.
Informally the dominant term is obtained as follows:
\begin{enumerate}
  \item replace the binomial coefficient
by its dominant part, which is
\[\tfrac{1}{(\ell-t)!}\, (n-y_1-\cdots-y_t)^{\ell-t};\]
\item forget the condition $y_1+\ldots+y_t \le n$
in the sum;
\item use the approximation
  \begin{multline*}
    \sum_{y \ge 1} y^j \left( 1-\frac{a}{M} \right)^{y-1} \approx
  \sum_{y \ge 1} (y+j-1) \cdots y \left( 1-\frac{a}{M} \right)^{y-1}
  \\
  =
  j! \left(\frac{1}{1-\left( 1-\frac{a}{M} \right)}\right)^{j+1}
  = j! \left(\frac{M}{a}\right)^{j+1}.
\end{multline*}
\end{enumerate}
(For the middle equality, note that both sides are the $j$-th derivative of $1/(1-x)$
evaluated in $x=1-\frac{a}{M}$.)
With this heuristic, we get
\begin{multline*}
  \sum_{y_1,\ldots,y_t \ge 1 \atop y_1+\ldots+y_t \le n}                    
  \binom{n-y_1-\cdots-y_t}{\ell-t} \prod_{i \le t} \left( 1-\frac{a_i}{M} \right)^{y_i-1}\\
  \approx \frac{1}{(\ell-t)!} \sum_{j_0,\dots,j_t \ge 0 \atop j_0+\dots+j_t=\ell-t}
  \binom{\ell-t}{j_0,\cdots,j_t}\, n^{j_0} \prod_{i \le t} \left[ \sum_{y_i \ge 1} (-y_i)^{j_i} \left( 1-\frac{a_i}{M} \right)^{y_i-1} \right]
  \\
    \approx \sum_{j_0,\dots,j_t \ge 0 \atop j_0+\dots+j_t=\ell-t} (-1)^{j_1+\dots+j_t}
  \frac{n^{j_0}M^{j_1+\dots+j_t+t}}{j_0! a_1^{j_1+1} \cdots a_t^{j_t+1}}. 
\end{multline*}
  The summand corresponding to $j_0=\ell-t,\, j_1=\dots=j_t=0$
  dominates the above sum.
  Inserting this estimate back into \cref{eq:CondEsper_BigSum},
  we deduce heuristically the following:
  uniformly on $M$ in $[m_n-n^{3/4},m_n+n^{3/4}]$, one has
\begin{equation}\label{eq:Equiv_Cond_Exp}
  \esper\big[ \OccSP|M\big] =  \frac{n^{\ell-t} M^{t-a}}{(\ell-t)!} (1+o(1)).
\end{equation}
Taking the variance of this function of $M$, we expect to get (see \cref{ssec:CumM} for a justification of this heuristic):
\begin{equation}
   \Var\big(\esper\big[ \OccSP|M\big] \big)
   = \esper\bigg[ \esper\big[ \OccSP|M\big]^2 \bigg] \tTheta(n^{-1}) = \tTheta\Big( n^{2\ell-2\arcs-1} \Big). 
  \label{eq:VarToJustify} 
\end{equation}
  In \cref{sec:variance_estimate}, we prove formally
  \cref{eq:Equiv_Cond_Exp,eq:VarToJustify}.
In particular, this completes the proof of the lemma.
\end{proof}

\subsection{The central limit theorem}
\label{ssec:CLT_SP}
In this section, we prove \cref{Thm:CLT_SetPart}.
The quantity $\OccSP$ of interest can be written in terms of the $X_{ij}$ 
as follows
\[ \OccSP = \sum_{s_1 < s_2 < \dots <s_\ell} 
\prod_{(i,j)\in \mathcal A} X_{s_i s_j}. \]
From \cref{thm:WDG_SetPart,prop:WDG_Products},
the family of monomials $\prod_{(i,j)\in \mathcal A} X_{s_i s_j}$
has a $(\bm \Psi_n,\bm D_n)$-weighted dependency graph, where
\begin{itemize}
  \item $\bm \Psi_n$ is the function on multisets of monomials
    $\prod_{(i,j)\in \mathcal A} X_{s_i s_j}$, which is $n^{-\# \text{arcs}}$,
    where $\# \text{arcs}$ denotes the total number of distinct arcs 
    $X_{s_i s_j}$ appearing in some monomial in the multiset;
  \item $(\bm D_n)_{n \ge 1}=(D_{r,n})_{r \ge 1,n \ge 1}$ is 
    a doubly indexed sequence of numbers
    such that, for each $r \ge 1$, we have $D_{r,n}=\tO(1)$.
\end{itemize}
We will prove the central limit theorem for $\OccSP$
by applying \cref{Thm:WDG}. For this, we need to evaluate the quantities $R_n$
and $T_{h,n}$ defined in \cref{EqDefR,EqDefT}.
We have
\[R_n:=\sum_{1\le s_1<\dots<s_\ell \le n}
\Psi\left( \left\{ \prod_{(i,j)\in \mathcal A} X_{s_i s_j} \right\}\right)
=\Theta(n^{\ell-a}).\]
(Recall that $a$ is the number of arcs in $\mathcal A$.)
We now consider $T_{h,n}$. In the following, 
we use $\alpha^{(k)}$ or $\beta$ to denote some $\ell$-uple of positive integers $s_1<\dots<s_\ell \le n$
and $\Pi(\alpha^{(k)})$ or $\Pi(\beta)$
for the corresponding monomial $\prod_{(i,j)\in \mathcal A} X_{s_i s_j}$.
By definition,
\[ T_{h,n}= \max_{\alpha^{(1)},\dots,\alpha^{(h)}} \left[ \sum_{\beta} F(\beta) \right],\]
where 
\[F(\beta)=W(\{\Pi(\beta)\},\{\Pi(\alpha^{(1)}),\dots,\Pi(\alpha^{(h)})\}) 
\frac{\Psi\big(\{\Pi(\beta),\Pi(\alpha^{(1)}),\dots,\Pi(\alpha^{(h)})\}\big)}
{\Psi\big(\{\Pi(\alpha^{(1)}),\dots,\Pi(\alpha^{(h)}) \}\big)}.  \]
To evaluate this quantity, we fix $\alpha^{(1)},\dots,\alpha^{(h)}$.
All constants in $\O$ terms below depend implicitly on $h$,
but not on $\alpha^{(1)},\dots,\alpha^{(h)}$.
\begin{itemize}
  \item Consider first the terms where $\beta$
    is disjoint from $\alpha^{(1)} \cup \dots \cup \alpha^{(h)}$.
    For such $\beta$, the weight in $F(\beta)$ is $1/n$,
    while the quotient of $\Psi$ is $n^{-a}$.
    Thus, we have $F(\beta)=n^{-1-a}$.
    Since there are $\O(n^\ell)$ such terms,
    their total contribution is $\O(n^{\ell-a-1})$.
  \item Consider now terms where $\beta$ 
    intersects $\alpha^{(1)} \cup \dots \cup \alpha^{(h)}$,
    but there is no factor in common between $\Pi(\beta)$
    and any $\Pi(\alpha^{(k)})$.
    There are only $\O(n^{\ell-1})$ such terms.
    In this case we bound the weight in $F(\beta)$ by $1$,
    and the quotient of $\Psi$'s is still $n^{-a}$.
    The total contribution of such terms is therefore also
    $\O(n^{\ell-a-1})$.
  \item We finally consider terms,  where some factors
    of $\Pi(\beta)$, say $g$ of them, are already present in some
    $\Pi(\alpha^{(k)})$ (possibly different factors are in different  $\Pi(\alpha^{(k)})$).
    This forces $\beta \cap (\alpha^{(1)} \cup \dots \cup \alpha^{(h)})$
    to be of size at least $g+1$,
    so that the number of such $\beta$ is $\O(n^{\ell-g-1})$.
    Again we bound the weight in $F(\beta)$ by $1$,
    but now the quotient of $\Psi$'s is $n^{-a+g}$.
    The total contribution of such terms is therefore also   
        $\O(n^{\ell-a-1})$ as well.
\end{itemize}
From this discussion, for any fixed $h\ge 1$,
we have $T_{h,n}=\O(n^{\ell-a-1})$.
We can therefore set $Q_n=n^{\ell-a-1}$ in \cref{Thm:WDG}.

From \cref{lem:LowerBound_Var_SetPart}, we know
that 
\[\sigma_n:=\sqrt{\Var(\OccSP)} \ge \tTheta(n^{\ell-a-1/2}).\]
Therefore, for $s=3$, we have
\[  \left(\tfrac{R_n}{Q_n}\right)^{1/s}\, \tfrac{Q_n}{\sigma_n} =\tTheta(n^{-1/6}) \]
and thus, this quantity tends to $0$ faster than any power of $D_{r,n}$ (for any fixed $r \ge 1$).
From \cref{Thm:WDG} and \cref{Rmk:WDGNonCstC}, we conclude that $\OccSP$ is asymptotically normal.

We still need to justify the expectation and variance estimates.
From \cref{eq:Equiv_Cond_Exp} above and using that $\mathbb E(M^{t-a})=\tTheta(n^{t-a})$
(see \cref{eq:momentM} below), we have $\mathbb E(\OccSP)= \tTheta(n^{\ell-a})$.
For the variance, a lower bound is obtained in \cref{lem:LowerBound_Var_SetPart}.
A matching upper bound comes from \cref{prop:WDG_UpperBound_Var},
using the estimates $R_n=\Theta(n^{\ell-a})$ and $T_{1,n}=\O(n^{\ell-a-1})$ given above.
\qed

\section{The weighted dependency graphs for set partitions}
\label{sec:WDG_SetPartition}
The goal of this section is to prove \cref{thm:WDG_SetPart},
i.e. that a given weighted graph is a weighted dependency graphs
for the presence of arcs in set partitions.


\subsection{Two general simple estimate for cumulants}
We start by two easy bounds on cumulants.
Denote $B_r=\sum_{\pi\in\ptn([r])} |\mu(\pi,{[r]})|$, 
which is a universal constant depending only on $r$.
We use the standard notation for the $r$-norm $\|X\|_r:=\E[|X|^r]^{1/r}$.
\begin{lemma}
  For any random variables $X_1,\dots,X_r$ with finite moments defined on the same probability space,
  we have
  \begin{equation}
     |\kappa(X_1,\dots,X_r)| \le B_r \prod_{i=1}^r \|X_i\|_r.
     \label{eq:Bound_Cumu_Holder}
   \end{equation}
   \label{lem:Bound_cum_Holder}
\end{lemma}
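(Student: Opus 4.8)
The plan is to work directly from the definition \eqref{eq:def_cumulants} of mixed cumulants, namely
\[
\kappa(X_1,\dots,X_r) = \sum_{\pi \in \ptn([r])} \mu(\pi,[r]) \prod_{B \in \pi} \esper\left( \prod_{i \in B} X_i \right),
\]
and to bound each summand by $|\mu(\pi,[r])| \cdot \prod_{i=1}^r \|X_i\|_r$, after which summing over $\pi$ gives the claimed factor $B_r = \sum_{\pi} |\mu(\pi,[r])|$. So the whole lemma reduces to the single inequality
\[
\left| \prod_{B \in \pi} \esper\left( \prod_{i \in B} X_i \right) \right| \le \prod_{i=1}^r \|X_i\|_r
\]
for every fixed set partition $\pi$ of $[r]$.

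To prove this inequality I would fix a block $B = \{i_1,\dots,i_s\} \in \pi$ of size $s \le r$ and apply the generalized H\"older inequality with exponents all equal to $s$: this gives
\[
\left| \esper\left( \prod_{i \in B} X_i \right) \right| \le \prod_{i \in B} \|X_i\|_s.
\]
Then, since $s \le r$ and the $X_i$ have finite moments of all relevant orders, the monotonicity of $L^p$-norms on a probability space gives $\|X_i\|_s \le \|X_i\|_r$ for each $i \in B$. Multiplying these bounds over all blocks $B \in \pi$ — and noting that the blocks partition $[r]$, so each index $i$ appears in exactly one block — yields $\left| \prod_{B \in \pi} \esper\left( \prod_{i \in B} X_i \right) \right| \le \prod_{i=1}^r \|X_i\|_r$, as required. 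Combining with the triangle inequality applied to \eqref{eq:def_cumulants} completes the proof.

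There is no real obstacle here; this is a routine estimate and the only points that need a word of care are (i) that H\"older requires the exponents $\tfrac{1}{s}+\dots+\tfrac{1}{s}=1$, which is exactly why we use exponent $s$ for a block of size $s$, and (ii) the norm monotonicity $\|\cdot\|_s \le \|\cdot\|_r$ for $s \le r$, which holds precisely because we are on a probability space (total mass one). One should also mention that the finiteness of the right-hand side — equivalently, that all $X_i \in L^r$ — is part of the hypothesis, so every term is well defined.
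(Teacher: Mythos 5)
Your proof is correct and is essentially the paper's own argument: the paper also deduces the bound immediately from the moment--cumulant formula \eqref{eq:def_cumulants} together with H\"older's inequality on each block and the monotonicity of $r$-norms on a probability space. Your write-up merely makes explicit the block-by-block choice of exponent $s=|B|$ and the summation over partitions giving $B_r$, which is exactly what the paper leaves implicit.
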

\begin{proof}
  This is an immediate consequence of the moment-cumulant formula \eqref{eq:def_cumulants},
  combined with H\"older inequality and the monotonicity of $r$-norms
  ($\|X\|_s \le \|X\|_r$ is $s \le r$).
\end{proof}
For an event $A$, we denote 
$\compl{A}$ its complement.
\begin{lemma}
  Let $X_1,\dots,X_r$ be random variables on the same probability space.
  Then, for any event $A$, we have
  \begin{align}
    |\esper(\One[A] \,X_1\, \cdots \,X_r )-\esper (X_1\, \cdots \,X_r )| &\le 
    \P(\compl{A})^{1/(r+1)} \, \prod_{i=1}^r \|X_i\|_{r+1};\\
    |\kappa(X_1 \One[A],\dots,X_r \One[A])-\kappa(X_1,\dots,X_r)| 
    &\le r \, B_r\, \P(\compl{A})^{1/(r+1)} \prod_{i=1}^r \|X_i\|_{r+1} \, .
    \label{eq:Bound_Cumu_Indicator}
  \end{align}
  \label{lem:cum_Moment_Rare}
\end{lemma}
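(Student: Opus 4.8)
The two inequalities in \cref{lem:cum_Moment_Rare} have the same flavour: replacing each $X_i$ by $X_i \One[A]$ (equivalently, inserting the indicator $\One[A]$ into a product) changes moments and cumulants by a controlled amount, governed by the small probability $\P(\compl A)$. The plan is to prove the first (moment) inequality by a direct expansion using $\One[A] = 1 - \One[\compl A]$ and H\"older's inequality, and then to deduce the second (cumulant) inequality from the first by plugging it into the moment--cumulant formula \eqref{eq:def_cumulants} and estimating block by block.

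\textbf{Step 1: the moment inequality.} Write
\[
  \esper(\One[A]\, X_1 \cdots X_r) - \esper(X_1 \cdots X_r)
  = -\,\esper\big(\One[\compl A]\, X_1 \cdots X_r\big).
\]
Apply the generalized H\"older inequality to the $r+1$ factors $\One[\compl A], X_1, \dots, X_r$, each raised to the exponent $r+1$: since $\|\One[\compl A]\|_{r+1} = \P(\compl A)^{1/(r+1)}$, this gives
\[
  \big|\esper(\One[\compl A]\, X_1 \cdots X_r)\big| \le \P(\compl A)^{1/(r+1)} \prod_{i=1}^r \|X_i\|_{r+1},
\]
which is exactly the first claimed bound. (If some $\|X_i\|_{r+1}=\infty$ the statement is vacuous, so we may assume all these norms are finite.)

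\textbf{Step 2: the cumulant inequality.} Use the moment--cumulant formula \eqref{eq:def_cumulants} for both $\kappa(X_1 \One[A], \dots, X_r \One[A])$ and $\kappa(X_1, \dots, X_r)$ and subtract. Since $\mu(\pi,[r])$ is the same in both, the difference is
\[
  \sum_{\pi \in \ptn([r])} \mu(\pi,[r]) \sum_{B \in \pi}
  \Bigg( \esper\Big(\prod_{i\in B} X_i \One[A]\Big) - \esper\Big(\prod_{i\in B} X_i\Big) \Bigg)
  \prod_{\substack{B' \in \pi \\ B' \ne B}} \esper\Big(\prod_{i \in B'} X_i \One[A]\Big),
\]
obtained by a standard telescoping: replacing the indicator one block at a time. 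Each single-block difference is bounded by Step 1 applied with the family $(X_i)_{i\in B}$: it is at most $\P(\compl A)^{1/(|B|+1)} \prod_{i\in B}\|X_i\|_{|B|+1} \le \P(\compl A)^{1/(r+1)} \prod_{i\in B}\|X_i\|_{r+1}$, using monotonicity of $r$-norms and $|B|+1 \le r+1$. The remaining untouched factors $\esper(\prod_{i\in B'} X_i \One[A])$ are bounded by $\prod_{i \in B'}\|X_i\|_{r+1}$ via H\"older (the indicator only helps). Multiplying and summing, every term contributes at most $|\mu(\pi,[r])|\, \P(\compl A)^{1/(r+1)} \prod_{i=1}^r \|X_i\|_{r+1}$; there are at most $r$ choices of the distinguished block $B$ inside a given $\pi$, and summing $|\mu(\pi,[r])|$ over $\pi$ gives $B_r$. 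This yields the factor $r\,B_r$ and completes the proof.

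\textbf{Main obstacle.} There is no real obstacle here; the only point requiring a little care is bookkeeping the telescoping in Step 2 so that one genuinely applies the moment bound of Step 1 to a \emph{single} block at a time while the other factors are merely H\"older-bounded, and checking that the exponent $1/(r+1)$ is uniform (rather than the sharper $1/(|B|+1)$) after using monotonicity of norms. The combinatorial constant $r\,B_r$ then drops out cleanly from counting distinguished blocks and summing $|\mu|$.
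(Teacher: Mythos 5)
Your proof is correct and takes essentially the same route as the paper: H\"older's inequality applied to $\esper(\One[\compl{A}]\,X_1\cdots X_r)$ for the moment bound, then the moment--cumulant formula \eqref{eq:def_cumulants} combined with a telescoping product-difference estimate, applying the moment bound block by block (with monotonicity of $r$-norms to pass from exponent $1/(|B|+1)$ to $1/(r+1)$) and counting $|\pi|\le r$ and $\sum_{\pi}|\mu(\pi,[r])|=B_r$ to obtain the factor $r\,B_r$. The only nitpick --- your displayed telescoping in Step 2 should mix factors with and without $\One[A]$ rather than carrying the indicator on every untouched block --- is immaterial, since both versions of those factors are H\"older-bounded by the same quantity $\prod_{i\in B'}\|X_i\|_{r+1}$.
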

\begin{proof}
  The first formula is a trivial consequence of H\"older inequality:
  \begin{multline*}
  \esper(\One[A] \,X_1\, \cdots \,X_r )-\esper (X_1\, \cdots \,X_r )=
  \esper(\One[\compl{A}] \, X_1\, \cdots \,X_r ) \\
  \le \|\One[\compl{A}]\|_{r+1} \, \prod_{i=1}^r \|X_i\|_{r+1}.
  \end{multline*}
  For the second, we use the moment-cumulant formula \eqref{eq:def_cumulants} and write
  \begin{multline}
    \label{eq:Tech4}
  \kappa(X_1 \One[A],\dots,X_r \One[A])-\kappa(X_1,\dots,X_r) \\
  = \sum_{\pi\in\ptn([r])}\mu(\pi,{[r]}) \prod_{B \in \pi} \left[ \esper\left(\One[A] \prod_{i \in B} X_i \right)
  - \esper\left( \prod_{i \in B} X_i \right)\right].
\end{multline}
For a block $B$ of size $s$ we have, since $r$-norms are increasing in $r$,
\[
  \bigg|\esper\left(\One[A] \prod_{i \in B} X_i \right) \bigg| 
  \le \bigg| \esper\left( \prod_{i \in B} |X_i| \right) \bigg| \le \prod_{i \in B} \|X_i\|_s \le \prod_{i \in B} \|X_i\|_{r+1};
  \]
  and
  \begin{multline*}
  \left| \left[ \esper\left(\One[A] \prod_{i \in B} X_i \right)     
    - \esper\left( \prod_{i \in B} X_i \right)\right] \right| 
    \le \P(\compl{A})^{1/(s+1)} \prod_{i=1}^r \|X_i\|_{s+1} \\
    \le  \P(\compl{A})^{1/(r+1)} \prod_{i \in B} \|X_i\|_{r+1}.
\end{multline*}
 Combining this with the classical inequality
  \[|a_1 \dots a_t - b_1 \dots b_t| 
  \le \sum_{i=1}^t |b_1| \cdots |b_{i-1}| \, |a_i - b_i| \, |a_{i+1}| \dots |a_t|\]
  implies that, for any set partition $\pi$, we have
  \[\left| \prod_{B \in \pi} \left[ \esper\left(\One[A] \prod_{i \in B} X_i \right)
    - \esper\left( \prod_{i \in B} X_i \right)\right] \right|
    \le |\pi| \P(\compl{A})^{1/(r+1)}\prod_{i=1}^r \|X_i\|_{r+1}.\]
    Since we always have $|\pi| \le r$ , plugging this inequality back into \cref{eq:Tech4}
    proves \cref{eq:Bound_Cumu_Indicator}.
\end{proof}

\subsection{Cumulants of rational functions in $M$}
\label{ssec:CumM}


In this section, we bound (joint) cumulants of various functions of $M$.
The important recurrent feature is that all cumulants 
have a smaller order of magnitude than what we could naively expect,
the difference being a factor $n^{1-r}$ 
(for joint cumulants of order $r$, up to logarithmic factors).
As usual, constants in $\O$ and $\tO$ symbols do depend
on the order $r$ of the cumulant under consideration.

Recall that the distribution of $M$ is given in \eqref{eq:distribution_M}.
In particular $M$ depends on $n$, even if this is implicit in the notation.
For a fixed integer $r$ (either positive or negative), one has
\begin{equation}
  \esper[M^r]=\frac{B_{n+r}}{B_n}=\tTheta(n^r);
  \label{eq:momentM}
\end{equation}
the first equality is indeed a direct consequence of the formula
for the distribution of $M$ (\cref{eq:distribution_M}),
while the second equality follows from asymptotic results for Bell numbers,
see {\em e.g.} \cite[Eq. (4)]{canfield1995engel}.
This implies $\|M\|_r=\O(n)$.
Regarding cumulants, the following estimates for $r \le 3$ were given in \cite[Section 8.3]{ModGaussian1}:
\begin{align}
  \label{eq:asymp_mn}  m_n:=\esper[M]&=\frac{n}{\ln\, n}(1+o(1))\\
  \label{eq:asymp_sin} \si_n^2:=\Var(M)&=\frac{n}{(\ln\, n)^2}(1+o(1)),\\
 \ka_3(M)&=\frac{2n}{(\ln n)^2}(1+o(1)).
\end{align}
Having such asymptotic equivalent for all cumulants seem hard,
but we can easily get a $\O(n)$ bound, which will be sufficient for us.

\begin{lemma}
 We have 
 $|\ka_r(M)| = \O(n) = m_n^r\, \tO(n^{1-r})$.
  \label{lem:Bound_CumM}
\end{lemma}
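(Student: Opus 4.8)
The plan is to bound $\kappa_r(M)$ directly from the moment-cumulant formula \eqref{eq:def_cumulants} combined with the explicit moment formula $\esper[M^j]=B_{n+j}/B_n$ from \eqref{eq:momentM}. The key combinatorial observation is that the cumulant $\kappa_r(M)=\kappa_r$ of a \emph{single} random variable should vanish to high order once we exploit cancellations: naively, each moment $\esper[M^j]=\tTheta(n^j)$, and the terms in the M\"obius sum each have size $\tTheta(n^r)$, but they cancel down to $\O(n)$. To make this cancellation visible, I would pass to the \emph{exponential generating function} (or equivalently the cumulant generating function) of $M$. Indeed, from \eqref{eq:distribution_M}, one computes
\[
\esper\big[e^{zM}\big] = \frac{1}{eB_n}\sum_{m\ge 0} \frac{(e^z m)^n}{m!}\cdot\frac{1}{e^{?}}
\]
— more precisely, using Dobi\'nski's formula $B_n = \frac1e\sum_{m\ge0} m^n/m!$, the probability generating machinery gives that the moments $\esper[M^j]$ are the values at $1$ of the ``shifted Bell'' sequence, i.e. $\esper[M^{\underline j}] = B_{n+j}/B_n \cdot(\text{correction})$ is cleanest for falling factorials: in fact $\esper[M(M-1)\cdots(M-j+1)] = B_{n+j}/B_n$ is \emph{not} quite right either, but the honest identity is that the factorial moments of $M$ are ratios of Bell numbers. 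The cleanest route is: the cumulant generating function $K_M(z)=\ln\esper[e^{zM}]$ satisfies, via Dobi\'nski, a closed form whose derivatives at $0$ are the $\kappa_r(M)$, and one reads off the order of magnitude from the known asymptotics of $\ln B_n$.

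Concretely, I would proceed as follows. First, recall the standard fact that if $W$ is the random variable with $\P(W=m)=e^{-1}/m!$ (a Poisson$(1)$ variable), then $B_n=\esper[W^n]$ and, for any $j$, $B_{n+j}/B_n=\esper[W^{n+j}]/\esper[W^n]$, which is the $n$-size-biased expectation; this exhibits $M$ as a size-biased / tilted Poisson variable. More usefully, $M \stackrel{d}{=}$ the number of occupied boxes is itself close to Poisson with a large mean, and its cumulants are governed by $\kappa_r(M) = \left.\frac{d^r}{dz^r}\right|_{z=0}\big[\ln B_n(e^z) - \ln B_n\big]$ where $B_n(x)$ is an appropriate Bell polynomial. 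Then I would invoke the known asymptotic expansion $\ln B_n = n\ln n - n\ln\ln n - n + \O(n/\ln n)$ together with its stability under the substitution $n\mapsto n e^{z}$ for bounded $z$ (equivalently, use the saddle-point / Hayman-admissibility analysis of the Bell generating function that already underlies \eqref{eq:asymp_mn}--\eqref{eq:momentM} and the $r\le 3$ estimates cited from \cite{ModGaussian1}). Differentiating the leading term $n e^z \ln(n e^z) = ne^z(\ln n + z)$ in $z$ and evaluating at $z=0$ produces, for each $r\ge 1$, a contribution of the form $n(\ln n + \text{const})$ from the $ne^z\ln n$ piece and $\O(n)$ from the $ne^z z$ piece; the point is that \emph{every} term that survives differentiation is $\O(n\ln n)$ at worst and in fact the precise leading behaviour (as in the $r\le3$ cases, where $\kappa_r(M)=\tTheta(n/(\ln n)^{?})$) is $\O(n)$ up to logarithmic factors. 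Matching against $m_n^r = \tTheta\big((n/\ln n)^r\big)$ from \eqref{eq:asymp_mn}, we get $\O(n) = m_n^r\,\tO(n^{1-r})$, which is the claimed bound.

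Alternatively — and this is probably the cleaner writeup to actually include — I would avoid generating functions and argue by a direct recursive/inductive estimate: express $\kappa_r(M)$ via the recursion $\esper[M^r] = \sum_{j=1}^{r}\binom{r-1}{j-1}\kappa_j(M)\,\esper[M^{r-j}]$ (the standard moment-cumulant recursion), solve for $\kappa_r(M)$, and plug in $\esper[M^s]=B_{n+s}/B_n$. Using the refined asymptotics of Bell number ratios, $B_{n+s}/B_n = m_n^s\big(1 + \O(1/\ln n)\big)$ does \emph{not} suffice (the error is too big); instead one needs the sharper expansion $\ln(B_{n+1}/B_n) = \ln m_n + \O(1/n)$-type statement, i.e. that $B_{n+s}/B_n$ agrees with $\exp\big(\sum_{i=0}^{s-1}\ln m_{n+i}\big)$ to relative precision $o(1/n^{r-1})$ — this is exactly the content of the admissibility estimates in \cite{canfield1995engel, ModGaussian1}. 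Granting that, the moment-cumulant recursion yields the cancellation automatically by induction on $r$: assuming $\kappa_j(M)=\O(n)$ for $j<r$, the recursion forces $\kappa_r(M) = \esper[M^r] - \sum_{j<r}\binom{r-1}{j-1}\kappa_j(M)\esper[M^{r-j}]$, and the sharp Bell asymptotics make the right-hand side collapse from its naive size $\tTheta(n^r)$ down to $\O(n)$.

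\textbf{Main obstacle.} The crux is obtaining Bell-number-ratio asymptotics precise enough to see the cancellation down to $\O(n)$: the relative error in $B_{n+s}/B_n \approx m_n^s$ must be controlled to order $o(n^{1-r})$, not merely $o(1)$. This is a Hayman-type saddle-point analysis (the Bell generating function $\sum B_n x^n/n! = e^{e^x-1}$ is Hayman-admissible), and while it is classical and is implicitly used for the $r\le 3$ cumulant formulas quoted above, pushing it uniformly to all $r$ requires care with the error terms in the saddle-point expansion. I would cite \cite{canfield1995engel} and \cite{ModGaussian1} for the precise form needed, or set up the generating-function computation and extract the bound from the expansion of $\ln B_n(e^z)$, which is the least error-prone path.
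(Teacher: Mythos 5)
Your write-up is a plan rather than a proof, and the plan leaves the essential point unproven. Both of your routes (the cumulant generating function via $\ln B_n(e^z)$, and the moment--cumulant recursion with $\esper[M^s]=B_{n+s}/B_n$) reduce the lemma to showing that Bell-number ratios agree with their leading approximation to relative precision $o(n^{1-r})$, uniformly in the order $r$ of the cumulant. You correctly identify this as the crux, but you do not establish it: the references you lean on give Engel-type inequalities and explicit cumulant asymptotics only up to order $3$, not a saddle-point expansion with error terms sharp enough and uniform enough to produce the cancellation from the naive $\tTheta(n^r)$ size of each term down to $\O(n)$ for every $r$. Asserting that ``pushing it uniformly to all $r$ requires care'' is precisely the missing content; as it stands the cancellation is postulated, not derived. (There are also smaller slips along the way, e.g.\ the hesitation about whether moments or factorial moments of $M$ equal $B_{n+s}/B_n$ --- the paper's \eqref{eq:momentM} already states that the ordinary moments do, directly from Dobi\'nski.)

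The paper's proof sidesteps all of this analytic machinery. By \cref{prop:Stam}, $M$ has the same law as $P+X_n$, where $P$ is Poisson$(1)$, independent of the number of blocks $X_n$ of a uniform set partition; by Harper's theorem, $X_n$ is distributed as a sum of $n$ independent Bernoulli variables. Additivity of cumulants over independent summands then gives $\ka_r(M)=\ka_r(P)+\sum_{i=1}^n \ka_r(B_{i,n})$, and since the $r$-th cumulant of any Bernoulli variable is bounded by a constant depending only on $r$, the bound $|\ka_r(M)|=\O(n)$ is immediate; the rewriting as $m_n^r\,\tO(n^{1-r})$ is just \eqref{eq:asymp_mn}. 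In generating-function language, Harper's input is the real-rootedness of the Bell (Touchard) polynomial, which is exactly what replaces the delicate uniform saddle-point estimates your plan would require. If you want to keep your framework, you would need either to import that real-rootedness fact (at which point you have reproduced the paper's argument) or to actually carry out the Hayman-admissible expansion of $\ln B_n(e^z)$ with uniform control of all derivatives, which is considerably more work than the lemma deserves.
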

\begin{proof}
  Note that the second equality follows from \eqref{eq:asymp_mn}. We focus therefore on the first one.

  From \cref{prop:Stam}, it follows that $M$ the same law as $X_n+P$,
where $X_n$ is the number of parts in a uniform random set partition of $[n]$,
and is independent from the $\mathrm{Poisson}(1)$ random variable $P$.
Besides, we know from Harper \cite[p. 413]{HarperCLTBlocks} 
that $X_n$ can be written as a sum of $n$ independent Bernoulli variables
  $X_n=\sum_{i=1}^n B_{i,n}$; the parameters of these Bernoulli variables
  are here irrelevant.
  Summing up, we get for $M$ the following useful representation:
  \begin{equation}
    M \stackrel{\text{\tiny law}}= P+ \sum_{i=1}^n B_{i,n}.
    \label{eq:M_Sum_Ind}
  \end{equation}
 
Using the additivity of cumulants on independent random variables, we have
\[\ka_r(M) = \ka_r(P) + \sum_{i=1}^n \ka_r(B_{i,n}).\]
Since $\ka_r(B_{i,n})$ is bounded by a constant $D_r$, independently on the parameter of $B_{i,n}$,
the lemma is proved.
\end{proof}
This lemma has the following easy consequence. 
Consider the following normalized version of $M$
\[Z_n:=\frac{M-m_n}{\sigma_n},\]
then its cumulants behave as follows: $\ka_1(Z_n)=0$ and, for $r\ge 2$,
\begin{equation}
    |\ka_r(Z_n)| =\si_n^{-r} \ka_r(M)= \tO( n^{1-r/2} ).
   \label{eq:bound_Cumu_Zn}
 \end{equation}
For $r \ge 3$, the upper bound tends to $0$,
so that $Z_n$ converges in distribution and in moments
to a standard normal variable.
(The convergence in distribution is stated in \cite[Theorem 2.1]{CLT_SetPartitionsStatistics},
see also Harper \cite{HarperCLTBlocks}.)
\medskip

We now give bounds for joint cumulants of powers of $M$.
\begin{corollary}
  For any integers $i_1,\dots,i_r \ge 1$,
  we have
  \[\ka(M^{i_1},\cdots,M^{i_r}) = \O(n^{i_1+\dots+i_r-r+1}) =m_n^{i_1+\dots+i_r} \tO(n^{1-r}),\]
  where the constants in $\O$ and $\tO$ symbols depend on $i_1,\dots,i_r$.
  \label{corol:bound_Cumu_Powers}
Consequently, we have
\begin{equation}
  \ka(Z_n^{i_1},\dots,Z_n^{i_r}) = \left( \frac{m_n}{\si_n} \right)^{i_1+\dots+i_r} \tO(n^{-r+1}).
   \label{eq:Tech1}
 \end{equation}
\end{corollary}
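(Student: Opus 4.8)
The plan is to exploit the representation \eqref{eq:M_Sum_Ind} of $M$ as a sum $M \stackrel{\text{\tiny law}}{=} \sum_{k=0}^n Y_k$ of independent random variables, with $Y_0 = P \sim \mathrm{Poisson}(1)$ and $Y_k = B_{k,n}$ for $k \ge 1$; the feature that makes everything work is that each $Y_k$ has all its $L^p$-norms bounded by a constant depending only on $p$ (indeed $\|Y_k\|_p \le 1$ for $k \ge 1$, while $\|Y_0\|_p = \|P\|_p$ is a fixed finite number). First I would expand each power as $M^{i_j} = \sum_{(k_1,\dots,k_{i_j})\in\{0,\dots,n\}^{i_j}} Y_{k_1}\cdots Y_{k_{i_j}}$ and use the multilinearity of joint cumulants to write $\ka(M^{i_1},\dots,M^{i_r})$ as a sum, over tuples of index-vectors $\big(\vec k^{(1)},\dots,\vec k^{(r)}\big)$, of joint cumulants of the monomials $\prod_a Y_{k^{(j)}_a}$. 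Since a joint cumulant vanishes as soon as its arguments split into two independent subfamilies (property~\ref{item:ind_CumZero}), only those tuples contribute for which the graph on $\{1,\dots,r\}$, with an edge between $j$ and $j'$ whenever $\vec k^{(j)}$ and $\vec k^{(j')}$ have an entry in common, is connected.

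The core of the argument is then a bipartite connectivity count. For a surviving tuple, form the bipartite graph between the $r$ arguments and the set $U$ of distinct indices appearing, with an edge $(j,k)$ when $k$ occurs in $\vec k^{(j)}$; this graph is connected (the argument graph above being its ``common-neighbour'' projection), so it has at least $r + |U| - 1$ edges, while it has at most $\sum_j i_j$ edges. Hence $|U| \le i_1 + \dots + i_r - r + 1$. The rest is routine: there are only finitely many ``coincidence patterns'' among the $\sum_j i_j$ index-slots (a number depending on $i_1,\dots,i_r$), and for each pattern with $|U|$ classes there are $\O(n^{|U|}) = \O(n^{i_1+\dots+i_r-r+1})$ choices of index values; for each resulting tuple the corresponding joint cumulant is $\O(1)$ by the H\"older-type bound \eqref{eq:Bound_Cumu_Holder} of \cref{lem:Bound_cum_Holder}, since $\big\|\prod_a Y_{k^{(j)}_a}\big\|_r \le \prod_a \|Y_{k^{(j)}_a}\|_{r i_j}$ and each factor is bounded by a constant. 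Summing yields $\ka(M^{i_1},\dots,M^{i_r}) = \O(n^{i_1+\dots+i_r-r+1})$, and the identification with $m_n^{i_1+\dots+i_r}\,\tO(n^{1-r})$ follows at once from $m_n = \tTheta(n)$ (by \eqref{eq:asymp_mn}, or \eqref{eq:momentM}).

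For the statement about $Z_n$, I would run exactly the same argument with $M$ replaced by $M - m_n = \sum_{k=0}^n (Y_k - \esper Y_k)$: the centred variables still have uniformly bounded $L^p$-norms, so the identical reasoning gives $\ka\big((M-m_n)^{i_1},\dots,(M-m_n)^{i_r}\big) = \O(n^{i_1+\dots+i_r-r+1})$. By multilinearity $\ka(Z_n^{i_1},\dots,Z_n^{i_r}) = \si_n^{-(i_1+\dots+i_r)}\,\ka\big((M-m_n)^{i_1},\dots,(M-m_n)^{i_r}\big)$, and rewriting $\si_n^{-(i_1+\dots+i_r)}n^{i_1+\dots+i_r} = (m_n/\si_n)^{i_1+\dots+i_r}(n/m_n)^{i_1+\dots+i_r}$ together with $n/m_n = \tO(1)$ produces the claimed $\big(m_n/\si_n\big)^{i_1+\dots+i_r}\tO(n^{-r+1})$.

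The main obstacle is the combinatorial heart of step two: proving that connectedness of the argument graph forces at most $i_1+\dots+i_r-r+1$ distinct indices, and packaging the sum over index-tuples into finitely many coincidence patterns so that the $\O(1)$ cumulant bound can be applied uniformly. Everything else — the H\"older estimate and the bookkeeping of logarithmic factors inside the $\tO$ symbols — is mechanical.
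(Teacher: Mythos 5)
Your proof is correct, but it takes a different route from the paper's. The paper applies the Leonov--Shiryaev formula for cumulants of products directly to $\ka(M^{i_1},\dots,M^{i_r})$, writing it as a sum over set partitions $\pi$ of the slot set $[i_1]\uplus\dots\uplus[i_r]$ subject to the connectedness condition \eqref{Eq_Cond_LS}; each block then contributes a factor $\ka_{|B|}(M)=\O(n)$ by \cref{lem:Bound_CumM} (which is where the paper uses the representation \eqref{eq:M_Sum_Ind}), and the bound follows from the combinatorial fact that connectedness forces $\#(\pi)\le i_1+\dots+i_r-r+1$ --- a fact the paper leaves to the reader and which is exactly your bipartite edge count. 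You instead plug \eqref{eq:M_Sum_Ind} into the joint cumulant from the start, expand by multilinearity, discard disconnected index tuples by the independence property of cumulants, bound each surviving monomial cumulant by $\O(1)$ via the H\"older estimate of \cref{lem:Bound_cum_Holder} (with the generalized H\"older step for the product norms), and count surviving tuples through the connectivity bound on the number of distinct indices. In effect you re-derive the relevant instance of Leonov--Shiryaev by hand and replace the intermediate lemma $\ka_r(M)=\O(n)$ by uniform $\O(1)$ bounds on the building blocks; this is more self-contained, while the paper's route is shorter given that \cref{lem:Bound_CumM} is needed anyway (e.g.\ for \eqref{eq:bound_Cumu_Zn}). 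Your treatment of \eqref{eq:Tech1} also differs: you rerun the argument for the centered variables $Y_k-\esper Y_k$ and pull out $\si_n^{-(i_1+\dots+i_r)}$ by homogeneity, whereas the paper's ``consequently'' is most naturally read as a binomial expansion of $Z_n^{i_j}$ in powers of $M$ combined with the first bound; both give the stated estimate, and your centering argument is arguably cleaner since constant arguments never appear.
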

\begin{proof}
  Leonov-Shiryaev formula for cumulants of products of random variables \cite{LeonovShiryaevCumulants} gives
\[\ka(M^{i_1},\cdots,M^{i_r}) = \sum_\pi \prod_{B \in \pi} \ka_{|B|}(M),\]
where the sum runs over all set partitions $\pi$ of $[i_1]\uplus \cdots \uplus [i_r]$ such that
\begin{equation}
  \pi \vee \{[i_1],\cdots,[i_r] \} = \{[i_1] \uplus \dots \uplus [i_r] \}.
  \label{Eq_Cond_LS}
\end{equation}
(Here, $\vee$ is the joint operation on the set partition lattice, ordered by refinement.)
Using \cref{lem:Bound_CumM}, we have
\[\ka(M^{i_1},\cdots,M^{i_r})  = \O\left( \max_\pi n^{\#(\pi)} \right),\]
where $\#(\pi)$ is the number of parts of $\pi$
and the maximum is taken on set partitions $\pi$ satisfying \eqref{Eq_Cond_LS}.
A simple combinatorial argument (left to the reader)
shows that \eqref{Eq_Cond_LS} implies 
\hbox{$\#(\pi) \le i_1+\dots+i_r+1-r$},
proving the corollary.
\end{proof}
\medskip

Finally, we consider joint cumulants of $1/M$ and $Z_n/M$,
which will be useful in the next section.
To this end, we need some concentration inequality for $M$.
Let us introduce the following event
\[A_n=\{ m_n - n^{3/4} \le M \le m_n +n^{3/4} \}.\]
Note that $n^{3/4}$ is larger than the standard deviation $\sigma_n=\tO(n^{1/2})$
of $M$, so that we expect $A_n$ to hold with large probability.
Indeed, the following holds.
\begin{lemma}
 $\P[ \compl{A_n} ]$ tends to $0$ faster than any rational function of $n$.
  \label{lem:Concentration_M}
\end{lemma}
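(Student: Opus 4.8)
The plan is to bound $\P[\compl{A_n}]$ by Markov's inequality applied to a high even moment of the centered variable $M-m_n$, the required moment estimate being an immediate consequence of the cumulant bounds of \cref{lem:Bound_CumM}.

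First I would fix an even integer $r\ge 2$ and expand the centered moment through the moment--cumulant formula (the inversion of \eqref{eq:def_cumulants}). Since $M-m_n$ has vanishing first cumulant and the same cumulants as $M$ in every order $\ge 2$, one gets
\[ \esper\big[(M-m_n)^r\big] = \sum_{\pi \in \ptn([r])} \prod_{B \in \pi} \ka_{|B|}(M-m_n) = \sum_{\pi} \prod_{B \in \pi} \ka_{|B|}(M), \]
where the last sum runs only over the set partitions $\pi$ of $[r]$ having no singleton block (the remaining terms vanish because $\ka_1(M-m_n)=0$). Each such $\pi$ has at most $r/2$ blocks, each factor $\ka_{|B|}(M)$ is $\O(n)$ by \cref{lem:Bound_CumM}, and the number of such $\pi$ is a constant depending only on $r$; bounding the sum termwise in absolute value therefore yields $\esper\big[(M-m_n)^r\big] = \O(n^{r/2})$, with an implicit constant depending on $r$.

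Then Markov's inequality, together with $(M-m_n)^r\ge 0$, gives
\[ \P[\compl{A_n}] = \P\big(|M-m_n| > n^{3/4}\big) \le \frac{\esper\big[(M-m_n)^r\big]}{n^{3r/4}} = \O\big(n^{r/2-3r/4}\big) = \O\big(n^{-r/4}\big). \]
Since $r$ is an arbitrary even integer and $-r/4\to-\infty$, this shows $\P[\compl{A_n}] = o(n^{-k})$ for every $k$, i.e.\ it decays faster than any rational function of $n$. As an alternative, one could instead use the representation \eqref{eq:M_Sum_Ind} of $M$ as an independent $\mathrm{Poisson}(1)$ variable plus a sum of $n$ independent $[0,1]$-valued variables and apply a Bernstein-type concentration inequality; because the gap $n^{3/4}$ dominates the standard deviation $\sigma_n=\tO(n^{1/2})$ by a polynomial factor, that route even yields a bound exponentially small in a positive power of $n$. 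The moment estimate above is all we need here.

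There is no substantial obstacle: the only point to check is that $n^{3/4}$ exceeds $\sigma_n=\tTheta(n^{1/2})$ by a polynomial factor, which is exactly what makes the $\O(n^{r/2})$ numerator beat $n^{3r/4}$ for every $r$; the genuine work — that \emph{all} cumulants of $M$, not just the first two, are of order $n$ — has already been carried out in \cref{lem:Bound_CumM}.
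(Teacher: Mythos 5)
Your proof is correct, but it takes a genuinely different route from the paper. The paper proves the lemma directly from the representation \eqref{eq:M_Sum_Ind} of $M$ as an independent $\mathrm{Poisson}(1)$ variable plus a sum of $n$ independent Bernoulli variables, combining a Poisson tail estimate with Hoeffding's inequality; this yields the explicit exponential bound $\P[\compl{A_n}]\le 3\exp(-\tfrac12 n^{1/2})$ for large $n$. You instead convert the cumulant bound of \cref{lem:Bound_CumM} into a centered moment bound $\esper[(M-m_n)^r]=\O(n^{r/2})$ via the moment--cumulant formula (correctly using that $\ka_1(M-m_n)=0$ kills all partitions with singleton blocks, so at most $r/2$ blocks survive, each contributing $\O(n)$), and then apply Markov with $r$ even to get $\P[\compl{A_n}]=\O(n^{-r/4})$ for every $r$. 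This gives only superpolynomial rather than exponential decay, but that is exactly what the lemma asserts and all that the paper uses later (the $oe(n)$ error terms only need decay beating every rational function, since the relevant $r$-norms are polynomially bounded). There is no circularity, as \cref{lem:Bound_CumM} is established independently of the concentration lemma. The trade-off: your argument is arguably more economical given the cumulant machinery already in place, and it generalizes to any sequence of variables whose cumulants of every fixed order are $\O(n)$; the paper's argument is more elementary and quantitatively stronger, which could matter if one wanted explicit exponential tail control, though it is not needed here.
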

\begin{proof}
  We use the representation \eqref{eq:M_Sum_Ind} of $M$ as a sum of independent variables:
\[    M \stackrel{\text{\tiny law}}= P+ \sum_{i=1}^n B_{i,n}, \]
where $P$ follows a Poisson law of parameter $1$
and the $B_{i,n}$ are independent Bernoulli variables,
whose parameters are not relevant.
A standard tail estimate for Poisson distribution (see, {\em e.g.}, \cite[p 97]{ProbAndComputing}) gives
\[ \P\Big[ \big|P - \E(P) \big| \ge \tfrac12 n^{3/4} \Big] 
\le  \P\Big[ P \ge \tfrac12 n^{3/4} \Big] \le e^{-1} \left( \frac{2e}{n^{3/4}} \right)^{n^{3/4}}, \]
while Hoeffding's inequality \cite[Theorem 1, eq. (2.3)]{HoeffdingInequality} tells us that
\[ \P \bigg[ \,\bigg|\sum_{i=1}^n (B_{i,n} - \E(B_{i,n}))\bigg| \ge \tfrac12 n^{3/4} \bigg] \le 2 \exp \big(-\tfrac12 n^{1/2}\big).\]
Combining both inequalities tells us yields, for $n$ big enough,
\[ \P\big[ |M-m_n| \ge n^{3/4}\big] \le 3 \exp \big(-\tfrac12 n^{1/2}\big).\qedhere\]
\end{proof}
\smallskip

 From \cref{lem:cum_Moment_Rare},
 adding/removing $\One[A_n]$ from joint cumulants/joint moments of
 powers of $Z_n$, $M$ and $1/M$ change the resulting value
 by an error that is smaller than any rational function of $n$
 (indeed, for each fixed $r \ge 1$, the $r$-norms of
 $Z_n$, $M$ and $1/M$ are bounded by $\tO(\sqrt{n})$, $\O(n)$ and $1$ respectively).
 We will denote by $oe(n)$ such error terms and use this fact repeatedly below.

 We can now turn back to bounds on cumulants.
\begin{proposition}
For nonnegative integers $r_1,r_2$ with $r_1+r_2 \ge 1$, 
we have
\[\ka\Big(\underbrace{M^{-1},\dots,M^{-1}}_{r_1 \, \text{\scriptsize \em times}},
\underbrace{Z_n/M,\dots,Z_n/M}_{r_2 \,\text{\scriptsize \em times}}\Big) =m_n^{-r_1} \sigma_n^{-r_2} \tO(n^{1-r_1-r_2}).\]
\label{prop:Cumu_MInverse}
\end{proposition}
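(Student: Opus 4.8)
The plan is to reduce the whole statement to a single estimate for cumulants of $M^{-1}$ alone, and then to prove that estimate by a truncated geometric expansion of $M^{-1}$ around $m_n$ on the concentration event $A_n$. The key preliminary remark is the identity $Z_n/M = \sigma_n^{-1} - (m_n/\sigma_n)\,M^{-1}$, which exhibits every argument of the cumulant in question as an affine function of $M^{-1}$. Since a cumulant of order $r\ge 2$ is unchanged when a constant is added to one of its arguments and is homogeneous of degree one in each argument, this gives, for $r:=r_1+r_2\ge 2$,
\[\ka\Big(\underbrace{M^{-1},\dots,M^{-1}}_{r_1},\underbrace{Z_n/M,\dots,Z_n/M}_{r_2}\Big)=\Big(-\tfrac{m_n}{\sigma_n}\Big)^{r_2}\ka_r(M^{-1}),\]
so it suffices to prove $\ka_r(M^{-1})=m_n^{-r}\,\tO(n^{1-r})$ for $r\ge 2$. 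The two remaining cases $(r_1,r_2)\in\{(1,0),(0,1)\}$ are disposed of directly: $\ka_1(M^{-1})=\esper[M^{-1}]=\tTheta(n^{-1})$ by \eqref{eq:momentM}, and $\esper[Z_n/M]=\sigma_n^{-1}\big(1-m_n\,\esper[M^{-1}]\big)=\sigma_n^{-1}\,\tO(1)$, the last factor being $\tO(1)$ by an elementary estimate on Bell numbers.

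To estimate $\ka_r(M^{-1})$ I would first use that on the event $A_n$ of \cref{lem:Concentration_M} one has $|M-m_n|/m_n=\tO(n^{-1/4})<1$, so that $M^{-1}$ equals the convergent geometric series $m_n^{-1}\sum_{k\ge 0}(-1)^k\big((M-m_n)/m_n\big)^k$. Truncating at an integer degree $K$ (chosen later, depending only on $r$) and using $(M-m_n)^k=\sigma_n^k Z_n^k$, I get
\[\One[A_n]\,M^{-1}=\sum_{k=0}^{K}\lambda_k\,\One[A_n]\,Z_n^{k}+\One[A_n]\,\rho,\qquad \lambda_k:=\tfrac{(-1)^k}{m_n}\Big(\tfrac{\sigma_n}{m_n}\Big)^{k},\]
where $\rho$ is the tail of the geometric series, hence $|\One[A_n]\,\rho|\le m_n^{-1}\,\tO(n^{-(K+1)/4})$ deterministically. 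Because $\|M^{-1}\|_s\le 1$ and the $r$-norms of $Z_n$ and its powers are polynomially bounded, \cref{lem:cum_Moment_Rare} lets me replace $\ka_r(M^{-1})$ by $\ka_r(\One[A_n]M^{-1})$ up to an $oe(n)$ error, and the latter expands multilinearly into a finite sum of terms indexed by assigning to each of the $r$ slots either an exponent $k_i\in\{0,\dots,K\}$ or the remainder $\rho$.

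The main terms (no remainder factor) would be handled as follows. For $r\ge 2$, a term containing some $Z_n^0=1$ vanishes because a cumulant with a constant argument is zero, so only the exponents $k_i\ge 1$ contribute; for those, \cref{lem:cum_Moment_Rare} removes the indicators up to $oe(n)$ and \eqref{eq:Tech1} yields $\ka(Z_n^{k_1},\dots,Z_n^{k_r})=(m_n/\sigma_n)^{k_1+\dots+k_r}\,\tO(n^{1-r})$. Multiplying by $\prod_i\lambda_{k_i}$, the factors $(\sigma_n/m_n)^{k_i}$ and $(m_n/\sigma_n)^{\sum k_i}$ cancel exactly, so each such term is $m_n^{-r}\,\tO(n^{1-r})$, and there are only $O(1)$ of them. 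For the error terms (with $p\ge 1$ remainder factors) I would bound the cumulant by H\"older, \eqref{eq:Bound_Cumu_Holder}, using $\|\One[A_n]\rho\|_r\le m_n^{-1}\tO(n^{-(K+1)/4})$ and $\|\One[A_n]Z_n^{k_i}\|_r\le\|Z_n\|_{rk_i}^{k_i}=\tO(n^{k_i/2})$ (the latter from $\|M\|_s=\O(n)$ and $\sigma_n=\tTheta(n^{1/2})$). Multiplying by $\prod_{i}\lambda_{k_i}$ and using once more that $(\sigma_n/m_n)^{k_i}\cdot n^{k_i/2}=\tO(1)$, each such term is $\le m_n^{-r}\,\tO(n^{-(K+1)/4})$; choosing $K$ with $(K+1)/4\ge r-1$ makes this $\le m_n^{-r}\tO(n^{1-r})$. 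Summing the $O(1)$ main terms, the finitely many error terms and all the $oe(n)$ contributions gives $\ka_r(M^{-1})=m_n^{-r}\,\tO(n^{1-r})$, which combined with the reduction proves the proposition.

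The step I expect to be the main obstacle is the control of the error terms, and more precisely of those with a single remainder factor and $r-1$ monomials $Z_n^{k_i}$ of high degree (up to $K$): a naive H\"older bound there produces a factor as large as $n^{(r-1)K/2}$, which would destroy the estimate. The resolution, and the only slightly delicate point, is that the scalar coefficients $\lambda_{k_i}$ decay like $n^{-k_i/2}$ since $\sigma_n/m_n=\tTheta(n^{-1/2})$, exactly compensating the growth of $\|Z_n^{k_i}\|_r$; one must therefore keep these coefficients attached to their monomials rather than bound the whole expansion crudely, after which the remainder contributions become negligible once $K$ is taken large.
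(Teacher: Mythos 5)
Your argument is correct, and its engine coincides with the paper's: a truncated expansion of $1/M$ in powers of $Z_n$ on the concentration event $A_n$ of \cref{lem:Concentration_M}, with the main terms controlled by \eqref{eq:Tech1} (i.e.\ \cref{corol:bound_Cumu_Powers}) and the remainder terms by the H\"older bound of \cref{lem:Bound_cum_Holder}, the truncation order being chosen large enough as a function of $r$. What you do genuinely differently is the preliminary reduction: from $Z_n/M=\sigma_n^{-1}-(m_n/\sigma_n)M^{-1}$, multilinearity and the vanishing of cumulants having a constant entry, you collapse the mixed cumulant to $(-m_n/\sigma_n)^{r_2}\,\ka_r(M^{-1})$ when $r=r_1+r_2\ge 2$. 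The paper does not use this identity; it expands both $\One[A_n]\,m_n/M$ and $\One[A_n]\,\sigma_n Z_n/M$ and tracks two remainder functions $\eps_k,\eps'_k$ inside the multilinear expansion of the cumulant $k_{r_1,r_2}$ of \eqref{eq:Cumulants_Strange_Normalization}. Your reduction buys lighter bookkeeping (one family of arguments, one remainder), at the small cost of a separate treatment of the order-one cases $(1,0)$ and $(0,1)$, which you handle correctly via \eqref{eq:momentM}. Two cosmetic remarks: the expansion terms with exponent $k_i=0$ have argument $\One[A_n]$, not the constant $1$, so they are $oe(n)$ rather than exactly zero -- one removes the indicator first via \cref{lem:cum_Moment_Rare}, exactly as you do for the other terms; and your bound $\|Z_n\|_s=\tO(n^{1/2})$ is cruder than the $\O(1)$ the paper uses (moments of $Z_n$ converge), but, as you note yourself, the coefficients $\lambda_{k_i}=\pm m_n^{-1}(\sigma_n/m_n)^{k_i}$ exactly compensate, so the remainder contributions are indeed negligible once $K\ge 4r-5$ (the paper's analogous choice is $k=2(r-1)$, the difference coming from its use of $\E[|Z_n|^{kr}]=\O(1)$ rather than your deterministic on-$A_n$ bound).
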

\begin{proof}
  Fix some integer $k \ge 0$ (that will be specified later).
Elementary analysis asserts that, 
 for any $x \ge -1/2$,
 \[(1+x)^{-1}= \sum_{i=0}^{k-1} (-x)^i + \eps_k(x),\]
 where $|\eps_k(x)| \le 2 |x|^{k}$.
 When $A_n$ holds, for $n$ large enough, we can use this expansion for $x=\frac{M-m_n}{m_n}=\frac{\si_n}{m_n} Z_n$ and write
 \begin{multline*}
 \One[A_n] \, \frac{m_n}{M} =\One[A_n]\,  
\left( 1+ \frac{\si_n}{m_n} Z_n \right)^{-1} \\
= \One[A_n]\,\left[ \sum_{i=0}^{k-1} \left( - \frac{\si_n}{m_n} Z_n \right)^i
+ \eps_k \left( \frac{\si_n}{m_n} Z_n \right) \right].
\end{multline*}
Multiplying this by $\frac{\si_n}{m_n}Z_n$ and 
setting $\eps'_k(x)=x \eps_{k-1}(x)$ (so that we have the bound $|\eps'_k(x)| \le 2 |x|^{k}$, as for $\eps_k(x)$),
we get (after a shift of index)
\[\One[A_n] \, \si_n \frac{Z_n}{M} = \One[A_n]\,\left[ - \sum_{i=1}^{k-1} \left( - \frac{\si_n}{m_n} Z_n \right)^{i}
 + \eps'_k \left( \frac{\si_n}{m_n} Z_n \right) \right].
 \]
We now consider the cumulants
\begin{equation}
  k_{r_1,r_2}:=\kappa\Bigg(\underbrace{\One[A_n]\, \tfrac{m_n}{M},\dots,\One[A_n]\, \tfrac{m_n}{M}}_{r_1 \,\text{\scriptsize \em times}},
\underbrace{\One[A_n] \, \si_n \tfrac{Z_n}{M},\dots, \One[A_n] \, \si_n \tfrac{Z_n}{M}}_{r_2\,\text{\scriptsize \em times}} \Bigg) .
\label{eq:Cumulants_Strange_Normalization}
\end{equation}
One the one hand, since $\compl{A_n}$ has exponentially small probability,
we can forget the indicators $\One[A_n]$ in the definition of $k_{r_1,r_2}$,
up to an error term $oe(n)$.
Therefore, we have
\begin{equation}
  k_{r_1,r_2}=m_n^{r_1} \sigma_n^{r_2} \, \ka\Big(\underbrace{M^{-1},\dots,M^{-1}}_{r_1 \, \text{\scriptsize \em times}},
  \label{eq:Tech10}
  \underbrace{Z_n/M,\dots,Z_n/M}_{r_2 \,\text{\scriptsize \em times}}\Big) \, +\, oe(n)
\end{equation}
On the other hand, we can expand \eqref{eq:Cumulants_Strange_Normalization} by multilinearity
and bound separately each summand (the number of summands is independent of $n$).
We set $r=r_1+r_2$ and distinguish two types of summands.
\begin{itemize}
  \item First consider, for $0 \le i_1,\dots,i_r \le k-1$, the summand 
    \[ \pm\, \kappa\left[\One[A_n] \left( - \frac{\si_n}{m_n} Z_n \right)^{i_1},
    \dots, \One[A_n] \left(-\frac{\si_n}{m_n} Z_n \right)^{i_r}\right],\]
    which does not involve any of the functions $\eps_k$.
    Removing the indicators $\One[A_n]$ yields an error of order $oe(n)$.
    Then, using \eqref{eq:Tech1}, we see that such terms are of order $\tO(n^{1-r})$.
  \item We now consider summands that involves some function $\eps_k$ or $\eps'_k$.
    Again the indicator functions can be forgotten up to an error of order $oe(n)$.
    For such terms, we use \cref{lem:Bound_cum_Holder}
    and therefore we only have to bound expressions of the form
    \begin{multline*}
      \E \left[ \left|\frac{\si_n}{m_n} Z_n \right|^{r\, i_1}\right]^{1/r} \cdots \E \left[\left|-\frac{\si_n}{m_n} Z_n \right|^{r\, i_t} \right]^{1/r} \cdots \\
      \cdots
    \E \left[\left|\eps_k\left( \frac{\si_n}{m_n} Z_n \right)\right|^{r}\right]^{\tfrac{s_1}{r}} 
    \E \left[\left|\eps'_k\left( \frac{\si_n}{m_n} Z_n \right)\right|^{r}\right]^{\tfrac{s_2}{r}}
  \end{multline*}
    for triples $(t,s_1,s_2) \ne (r,0,0)$ of sum $r$
    and integers $i_1,\dots, i_t \ge 0$.
    Denoting $A=k(s_1+s_2)+i_1+\dots+i_t$ and using the bound for $\eps_k$ and $\eps'_k$, 
    the above expression is smaller than
    \[2^{s_1+s_2} \left(\frac{\si_n}{m_n}\right)^A \E[|Z_n|^{r i_1}]^{1/r} \dots \E[|Z_n|^{r i_t}]^{1/r} \E[|Z_n|^{kr}]^{\tfrac{s_1+s_2}{r}}.\]
        Since cumulants of $Z_n$ converges, so does its moments and its absolute moments,
    and we get a bound in $\tO(n^{-A/2})$. Take $k=2(r-1)$. Since $s_1+s_2 \ge 1$,
    we have $A \ge k$ and
    we get that all summands are $\tO(n^{1-r})$.
\end{itemize}
Finally we conclude that $k_{r_1,r_2}=\tO(n^{1-r})$.
Together with \cref{eq:Tech10}, this concludes the proof.
\end{proof}
\begin{corollary}
  \label{corol:bound_Cumu_Powers_MInv_Zn}
  For any nonnegative integers $i_1,\dots,i_r$ and $j_1,\dots,j_r$, we have
  \[  \ka\Bigg[ \left( \tfrac{1}{M} \right)^{i_1} \left( \tfrac{Z_n}{M} \right)^{j_1},
  \dots,\left( \tfrac{1}{M} \right)^{i_r} \left( \tfrac{Z_n}{M} \right)^{j_r} \Bigg]
  =m_n^{-i_1-\dots-i_{r}} \sigma_n^{-j_1-\dots-j_r} \tO (n^{1-r} ),\]
  where the constant in the $\tO$ symbols depend on $i_1,\dots,i_{r_1},j_1,\dots,j_{r_2}$.
\end{corollary}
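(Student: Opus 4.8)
The plan is to deduce this estimate from \cref{prop:Cumu_MInverse} in precisely the way that \cref{corol:bound_Cumu_Powers} was deduced from \cref{lem:Bound_CumM}, i.e. by means of the Leonov-Shiryaev formula for cumulants of products \cite{LeonovShiryaevCumulants}. I would first dispose of the degenerate case where some pair $(i_s,j_s)$ equals $(0,0)$: then the $s$-th argument is the deterministic variable $1$, which is independent of all the others, so for $r\ge 2$ the left-hand side vanishes, while for $r=1$ it equals $\esper[1]=1$; in both cases the asserted bound holds trivially. Hence I may assume $i_s+j_s\ge 1$ for every $s$.

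Next I would write each argument $\big(\tfrac1M\big)^{i_s}\big(\tfrac{Z_n}{M}\big)^{j_s}$ as a product of $i_s+j_s\ge 1$ elementary factors ($i_s$ copies of $1/M$ and $j_s$ copies of $Z_n/M$) and apply the Leonov-Shiryaev formula. This expresses the joint cumulant on the left as a finite sum, indexed by the set partitions $\pi$ of the set of $N:=\sum_s(i_s+j_s)$ elementary factors that are ``connected'' with respect to the coarse partition $\rho_0$ grouping the factors by their source argument (i.e. $\pi\vee\rho_0$ is the one-block partition), of the products $\prod_{B\in\pi}\ka(\text{elementary factors in }B)$. For a block $B$ containing $a_B$ factors of type $1/M$ and $b_B$ of type $Z_n/M$, with $a_B+b_B=|B|\ge 1$, \cref{prop:Cumu_MInverse} applied with $r_1=a_B$ and $r_2=b_B$ gives $\ka(\text{elementary factors in }B)=m_n^{-a_B}\si_n^{-b_B}\,\tO(n^{1-|B|})$. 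Multiplying over the blocks of $\pi$ and using that $\sum_{B\in\pi} a_B=i_1+\dots+i_r$ and $\sum_{B\in\pi} b_B=j_1+\dots+j_r$, I get that each summand equals $m_n^{-i_1-\dots-i_r}\si_n^{-j_1-\dots-j_r}\,\tO\big(n^{\#(\pi)-N}\big)$.

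Finally I would invoke, exactly as in the proof of \cref{corol:bound_Cumu_Powers}, the combinatorial fact that the connectedness condition forces $\#(\pi)\le N-r+1$: the bipartite multigraph on the $\#(\pi)+r$ vertices given by the blocks of $\pi$ and the blocks of $\rho_0$, with one edge per elementary factor joining its $\pi$-block to its $\rho_0$-block, is connected and has $N$ edges, hence $N\ge\#(\pi)+r-1$. Consequently $\#(\pi)-N\le 1-r$ for every $\pi$ contributing to the sum, and since the number of such $\pi$ depends only on $i_1,\dots,i_r,j_1,\dots,j_r$ and not on $n$, summing the bounds $\tO(n^{\#(\pi)-N})=\tO(n^{1-r})$ over $\pi$ yields the claim. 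I do not anticipate a genuine obstacle: the only point requiring mild care is the bookkeeping showing that the powers of $m_n$ and $\si_n$ produced block by block by \cref{prop:Cumu_MInverse} reassemble exactly into $m_n^{-(i_1+\dots+i_r)}\si_n^{-(j_1+\dots+j_r)}$, which is handled by the identities $\sum_{B\in\pi} a_B=\sum_s i_s$ and $\sum_{B\in\pi} b_B=\sum_s j_s$.
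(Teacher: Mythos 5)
Your proposal is correct and follows exactly the route the paper intends: the paper's proof is simply ``similar to the proof of \cref{corol:bound_Cumu_Powers}'', i.e.\ decompose each argument into elementary factors, apply the Leonov--Shiryaev formula, bound each block cumulant by \cref{prop:Cumu_MInverse}, and use the connectedness estimate $\#(\pi)\le N-r+1$, which is precisely what you do (with the degenerate constant-argument case handled explicitly, a harmless addition).
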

\begin{proof}
  Similar to the proof of \cref{corol:bound_Cumu_Powers}.
\end{proof}
We consider polynomials $P$ in the variables $(\bm x=\{x_1,\dots,x_p\},y,z)$
and define the following gradations.
If $P$ is a {\em monomial} in these variables, we denote by $\deg_{\bm x}(P)$ its total degree in $x_1,\dots,x_p$;
by $\deg_{y}(P)$ its degree in $y$;
and by $\deg_{\bm x,y^{-1}}(P)$ the difference $\deg_{\bm x}(P)-\deg_{y}(P)$.
As usual, for any of the three above notions of degree,
the degree of a polynomial $P$ is the maximal degree of a monomial with a nonzero coefficient in $P$.

We claim that \cref{corol:bound_Cumu_Powers_MInv_Zn} implies by linearity the following bound.
Let $(P_i)_{1 \le i \le s}$ be polynomials in $(\bm x=\{x_1,\dots,x_p\},y,z)$ 
and $d_i$ be integers with $\deg_{\bm x,y^{-1}}(P_i) \le d_i$.
Then, uniformly for all  
values $\bm a=(a_1,\dots,a_p)$ and $\bm b=(b_1,\dots,b_s)$ in $[0,n]$, we have
\begin{equation}
  \kappa\Big( P_i\big(\bm a,M^{-1},\tfrac{b_i \sigma_n Z_n}{M \cdot m_n}\big)\, :1 \le i \le s \Big)
=n^{\sum_{i \le s} d_i} \tO(n^{1-s}). 
\label{eq:BoundCumuPoly}
\end{equation}
Indeed it is enough to check this for monomials $P_i=\bm x^{\bm e_i}\, y^{f_i}\, z^{g_i}$
(for $i$ in $[s]$),
the general case following by linearity.
For monomials we have, using \cref{corol:bound_Cumu_Powers_MInv_Zn}
\begin{align*}
  \Bigg| \kappa\Big( P_i\big(\bm a,M^{-1},\tfrac{b_i \sigma_n Z_n}{M \cdot m_n}\big)
  \, :1 \le i \le s \Big) \Bigg|
  \hspace{-4cm} & \hspace{4cm} \\
  & \le \prod_{i \le s} \big( \bm a^{\bm e_i} (\tfrac{b_i \sigma_n}{m_n})^{g_i} \big)
   \cdot \Big| \kappa\big( M^{-f_i} (Z_n/M)^{g_i} \, : i \le s \big) \Big|\\
   & \le n^{\sum_{i \le s} \deg_{\bm x}(P_i)} \, \left( \tfrac1{\sqrt{n}} \right)^{g_1 +\dots+g_s}
   m_n^{-f_1-\dots-f_s} \sigma_n^{-g_1-\dots-g_s} \tO (n^{1-s}) \\
   & \le n^{\sum_{i \le s} (\deg_{\bm x}(P_i)-f_i)} \tO(n^{1-s}),
 \end{align*}
 concluding the proof of \eqref{eq:BoundCumuPoly}.

\subsection{Proof of \cref{thm:WDG_SetPart}}
  By definition of weighted dependency graphs,
we have to prove that 
for any multiset \hbox{$B=\{X_{i_1 j_1},\ldots,X_{i_r j_r}\}$} of elements of $A^n$,
one has
\begin{equation}
  \bigg| \ka\big( X_{i_1 j_1},\ldots,X_{i_r j_r} \big) \bigg| \le
  C_{r,n} \, \Psi(B) \, \MWST{\WDep^n[B]}. 
    \label{EqFundamentalRepeated_SetPart}
\end{equation}
Recall that $\MWST{\WDep^n[B]}$ is by definition
the maximal weight of a spanning tree of the graph
induced by $\WDep^n$ on $B$.

As in the proof of \cref{thm:WDG_MSet},
\cite[Proposition 5.2]{feray18dependency} allows us 
to reduce the proof of \eqref{EqFundamentalRepeated_SetPart}
to the case where $\WDep^n[B]$ has no edges of weight $1$,
i.e. where
$i_1,\dots,i_r$ and $j_1,\dots,j_r$ are two lists
of distinct integers (but we may have $i_t=j_s$ for some $s,t$ in $[r]$).
Indeed, this reduction was based on the following facts,
which hold true here as well:
\begin{itemize}
  \item 
variables linked by an edge of weight $1$ are incompatible 
(i.e. their product is a.s. $0$);
  \item 
our random variables take values in $\{0,1\}$;
\item $\Psi(B)$ is insensitive to repetitions.
\end{itemize}
In the case where $i_1,\dots,i_r$ and $j_1,\dots,j_r$ are two lists
of distinct integers, the bound to be proven is
\begin{equation}
  \bigg| \ka\big( X_{i_1 j_1},\ldots,X_{i_r j_r} \big) \bigg| 
  \le C_{r,n} n^{1-2r} =\tO(n^{1-2r}),
    \label{EqToProve}
\end{equation}
where the constant in $\tO$ symbol depend on $r$ but neither on $n$
nor on the indices $i_1,\dots,i_r,j_1,\dots,j_r$.
\medskip

Fix $r\geq 2$.
Take $i_1,i_2,\ldots,i_r$ distinct 
and $j_1,j_2,\ldots,j_r$ distinct with $i_s<j_s$ for all $s$.
We identify $B$ to the set of arcs $\{(i_1,j_1),(i_2,j_2),\ldots,(i_r,j_r)\}$
and will use letters $C$ and $D$ for subsets of $B$.

We first condition on the number of urns $M$ and get bound for the conditional cumulants.
Consider a set partition $\pi$ generated through the urn model with $M$ urns.
As explained in \eqref{eq:JointMoment_SP} above, we have
\begin{equation}
 \E\left(\prod_{(i,j)\in{D}}X_{ij} \Big| M\right)
=\prod_{g\in[n]}p_g({D}), \label{eq:CondExp_ProdArcs}
\end{equation}
where $p_g({D})$ is the probability of $g$ being dropped in a correct urn, that is
\[p_g({D})=\begin{cases}
  \frac{1}{M},\text{ if $g=j_t$ for some $(i_t,j_t)$ in D;}\\
               \frac{M-a_g({D})}{M},\text{ otherwise}.
              \end{cases}
\]
Here $a_g({D}):=\{(i,j)\in{D}:i<g<j\}$ is the number of arcs in $D$ above $g$.

As in the proof of \cref{thm:WDG_MSet},
we will use the small-cumulant/quasi-factorization equivalence.
For a subset $C$ of $B$, we define
\[Q_{C}(M):=\prod_{{D}\subseteq{C}}\E\bigg(\prod_{(i,j)\in {D}}X_{ij} \Big| M\bigg)^{(-1)^{|{D}|}}.\]
For each ${D}\subseteq{C}$, we can use \eqref{eq:CondExp_ProdArcs} for
the corresponding joint moment and we get
\[Q_{C}(M)
=\prod_{g\in[n]}\prod_{{D}\subseteq{C}}p_g({D})^{(-1)^{|{D}|}}.\]
The integers $g$ indexing the product can be divided in three categories:
\begin{enumerate}
  \item There exists an arc $(i_0,j_0)$ in ${C}$ that is not above $g$; 
    more precisely such that $g \le i_0$ or $g>j_0$, the large and strict inequalities being important.
    Then 
    \[ \prod_{{D}\subseteq{C}}p_g({D})^{(-1)^{|{D}|}}=1,\]
    since the map ${D}\leftrightarrow {D}\bigtriangleup\{(i,j)\}$ 
    ($\bigtriangleup$ is the symmetric difference)
    gives a fixed-point free sign-reversing involution of the factors.
  \item If all arcs $(i,j)$ in ${C}$ are above $g$ 
    (i.e. $i < g < j$ for all $(i,j)$ in ${C}$),
    then for all ${D} \subseteq {C}$, we have $a_g({D})=|{D}|$.
    Therefore,
    \[ \prod_{{D}\subseteq{C}}p_g({D})^{(-1)^{|{D}|}}
    = \prod_{{D}\subseteq{C}} \left(  \frac{M-|{D}|}{M} \right)^{(-1)^{|D|}}
    = \prod_{{D}\subseteq{C}} (M-|{D}|)^{(-1)^{|D|}}. \]
    The right hand-side clearly depends only on $M$ and of the size $|C|$ of ${C}$.
    By \cite[Lemma A.1]{feray18dependency},
    it writes as $1+R_{|C|}(M)$ for some rational function
    $R_{|C|}$ of degree at most $-|C|$.
  \item The remaining case is when both $\min(j_1,\dots,j_s)>\max(i_1,\dots,i_s)$ 
    and $g=\min(j_1,\dots,j_s)$ hold. To simplify notation, assume $g=j_1$.
    Then $p_g({D})=\frac1M$ whenever $(i_1,j_1) \in {D}$. The corresponding factors cancel out
    and we get, with similar arguments as above for the other factors,
    \[ \prod_{{D}\subseteq{C}}p_g({D})^{(-1)^{|{D}|}} =
    \prod_{{D}\subseteq{C} \setminus \{(i_1,j_1)\}} (M-|{D}|)^{(-1)^{|D|}}
    = 1+R_{|C|-1}(M). \]
\end{enumerate}
In conclusion, we have
\[Q_{C}(M)=(1+R_{|C|}(M))^{\ell_{C}}\, (1+R_{|C|-1}(M))^{\One[\ell_{C}>0]},\]
where $\ell_{C}$ is the number of $g\in[n]$ that are below all arcs, {\em i.e.}
\[\ell_{C}:=\max\Big[ \big(\min(j_1,\dots,j_s)-\max(i_1,\dots,i_s)\big),\, 0\Big].\]
Note that we always have $\ell_{C} \leq n$.
\medskip

We now perform an asymptotic expansion of $Q_C$,
using $\ast$ for coefficients which depend only on $|C|$ and do not need to be made explicit:
if $\ell_C>0$, we have
\begin{multline*}(1+R_{|C|}(M))^{\ell_{C}} =
\exp\big[ \ell_C \log(1+R_{|C|}(M)) \big] \\
= \exp\big[ \ell_C \log(1 + \ast M^{-|C|} + \ast M^{-|C|+1} + \dots + \O(M^{-r})) \big]\\
= \exp\big[ \ast \ell_C M^{-|C|} + \ast \ell_C M^{-|C|+1} + \dots + \O(\ell_C M^{-r})) \big] \\
=1 + \ast \ell_C M^{-|C|}+ \dots + \O(\ell_C M^{-r})+ \ast \ell_C^2 M^{-2|C|} + \dots + \O( \ell_C^{r} M^{-r|C|}).
\end{multline*}
For $|C| \ge 2$, under the assumption that $M \ge \tfrac12 m_n$,
both error terms are smaller than $\tO(n^{1-r})$
(we use the universal bound $\ell_C \le n$).
The main term is a polynomial in $\ell_C$ and $M^{-1}$.
After substracting $1$, its total degree in $\ell_C$ and $M$ is at most $-|C|+1$.
Note that both the constants in the error terms
and this polynomial only depend on the size of $|C|$.
The same applies to the other factor $1+R_{|C|-1}(M)$ of $Q_C$.
We therefore conclude that, for all $C$ such that $\ell_C>0$,
\begin{equation}
  Q_{C}(M) = 1 + P_{|C|}(\ell_C,M^{-1}) + \tO_{|C|}(n^{1-r}),
  \label{eq:Expansion_Quotients}
\end{equation}
where $P_{|C|}$ is a polynomial in $x$ and $y$
such that $\deg_{\bm x,y^{-1}}(P_{|C|}) \le -|C|+1$
and the error term $\tO_{|C|}(n^{1-r})$ is uniform for all $C$ of a given size
and all $M \ge \tfrac12 m_n$.
For $\ell_C=0$, we have $Q_{C}(M) = 1$, so that \eqref{eq:Expansion_Quotients}
holds trivially in this case as well (though, with different polynomials $P_{|C|}$
and error terms $\tO_{|C|}(n^{1-r})$).
\smallskip

We shall also need an expansion of the conditional expectation of single variable $X_{ij}$:
\begin{multline} \esper\big[ X_{ij} \, |\, M \big]
=  M^{-1} \big(1 - M^{-1} \big)^{j-i-1} \\
= M^{-1} \exp \big[ -(j-i)M^{-1} \big] \exp\big[M^{-1}+ \ast(j-i-1)M^{-2} + \dots + \O( (j-i) M^{-r}) \big]
\label{eq:Tech2}
 \end{multline}
 The second exponential writes as $\tilde{P}_1(j-i,M^{-1})+\tO(n^{1-r})$ 
 for some polynomial $\tilde{P}_1$ with $\deg_{\bm x,y^{-1}}(\tilde P_1) \le 0$.
 The exponent of the first exponential however is typically large,
 forcing us to use the concentration property of $M$ and
 exhibit the deterministic dominant term. We have
 \[\exp \big[ -(j-i)M^{-1} \big]
 = \exp \big[ -(j-i) m_n^{-1} \big] \exp \Bigg[ \frac{(j-i) \sigma_n Z_n}{M \cdot m_n} \Bigg], \]
 where we recall that $\sigma_n Z_n= M-m_n$ by definition.
 Assume now that we have $|M-m_n| \le n^{3/4}$,
 which implies $Z_n=\tO(n^{1/4})$.
 Then the argument of the second exponential is $\tO(n^{-1/4})$
 and we can perform a series expansion, uniformly on $(i,j)$ and $M$:
 \begin{equation}
   \exp \big[ -(j-i)M^{-1} \big]        
   = \exp \big[ -(j-i) m_n^{-1} \big] \, \Big( \hat{P}_1\big( \tfrac{(j-i) \sigma_n Z_n}{M \cdot m_n} \big)
 +\tO(n^{1-r}) \Big),
 \label{eq:Tech3}
 \end{equation}
 for some polynomial $\hat{P}_1(z)$. 
 Combining \cref{eq:Tech2,eq:Tech3}, we get
 \begin{equation}
   \esper\big[ X_{ij} \, |\, M \big]
   =  M^{-1}  \exp \big[ -(j-i) m_n^{-1} \big] 
   \Big[ P_1\big(j-i,M^{-1}, \tfrac{(j-i) \sigma_n Z_n}{M \cdot m_n} \big)
    + \tO(n^{1-r}) \Big],
   \label{eq:Expansion_expectations}
 \end{equation}
 where $P_1$ is a polynomial with $\deg_{\bm x,y^{-1}}(P_1) \le 0$
 and the error term is uniform on $(i,j)$ and $M$ in the interval 
 $[m_n-n^{3/4};m_n+n^{3/4}]$.
\medskip

We now come back to the conditional cumulant
\[\kappa_C(M):=\kappa\big(X_{ij}:(i,j)\in C \, |\, M\big).\]
As observed in \cite[Proof of Proposition 5.8]{feray18dependency},
it can be written in terms of conditional expectations
and quotients $Q$ as follows:
\[ \kappa_C(M) = \Bigg( \prod_{(i,j) \in C} \esper\big[ X_{ij} \, |\, M \big] \Bigg)\cdot
\sum_{\{{C}_1,\ldots,{C}_m\}} (Q_{{C}_1}-1) \cdots (Q_{{C}_m}-1),\]
where the sum is over all sets of (distinct) subsets 
${C}_1,\ldots,{C}_m\subseteq{C}$ such that $|{C}_i|\geq 2$ 
and the hypergraph with edges ${C}_1,\dots,C_m$ is connected.
Using the asymptotic expansions \eqref{eq:Expansion_Quotients}
and \eqref{eq:Expansion_expectations},
we get 
\begin{equation}
  \!
  \kappa_C(M) = M^{-|C|} \prod_{(i,j) \in C}\exp \big[ -(j-i) m_n^{-1} \big]
\Big[ P_C\big( \ll_C,M^{-1}, \tfrac{(j-i) \sigma_n Z_n}{M \cdot m_n} \big)
    + \tO(n^{1-r}) \Big],
    \label{eq:Expansion_Cumulants}
  \end{equation}
where $\ll_C$ represent the vector $(\ell_D)_{D \subseteq C}$
and $P_C$ is a polynomial in the variables $( \bm{x}=(x_D)_{D \subseteq C}, y,z)$.
Furthermore, the error term $\tO(n^{1-r})$ is uniform for $M$ 
in the interval $[m_n-n^{3/4};m_n+n^{3/4}]$.
Both the polynomials $P_C$ and the constants in the error terms
depend only on $C$ through its size and which of $\ell_D$ ($D \subseteq C$)
are non-zero.
Therefore the error term can thus be chosen uniformly on $C$ with $|C| \le r$.
Moreover, the total degree of $P_C$ in $\bm x$ and $y^{-1}$ is at most
\[ \max\limits_{\{ C_1,\ldots,C_m\}} (-|C_1|+1) + \dots + (-|C_m|+1), \]
where the maximum is taken over sets of subsets of $C$
such that the hypergraph with edges ${C}_1,\dots,C_m$ is connected.
It is a simple combinatorial exercise to see that this condition implies
\[(-|C_1|+1) + \dots + (-|C_m|+1) \le -|C|+1, \]
so that $\deg_{\bm x,y^{-1}}(P_C) \le -|C|+1$.
\medskip

We now use the law of total cumulance \eqref{eq:TotalCumulance}.
Recall that, from \cref{lem:Concentration_M},
the event $A_n=\{|M-m_n| \le n^{3/4}\}$
has probability $1-oe(n)$.
We have
\begin{align*} \kappa\left(X_{ij}:(i,j)\in B\right)
 &=\sum_{\pi\in\ptn(B)}\kappa\big(\kappa_C(M) : C\in\pi\big)\\
 &= \sum_{\pi\in\ptn(B)}\kappa \big(\One[A_n] \, \kappa_C(M) : C\in\pi\big) +oe(n)
\end{align*}
When $A_n$ is satisfied, the conditional cumulants $\kappa_C(M)$
can be evaluated by \eqref{eq:Expansion_Cumulants}
and we get
\begin{multline*}
   \kappa\left(X_{ij}:(i,j)\in B\right) +oe(n)
 =  \left( \prod_{(i,j) \in B} \exp \big[ -(j-i) m_n^{-1} \big] \right) \cdot \\
  \cdot \sum_{\pi\in\ptn(B)}\kappa \Bigg(\One[A_n] M^{-|C|} 
  \Big[ P_C\big( \ll_C,M^{-1}, \tfrac{(j-i) \sigma_n Z_n}{M \cdot m_n} \big)
     + \tO(n^{1-r}) \Big] \, :C \in \pi \Bigg)
\end{multline*}
We now bound the right-hand side.
The product of exponential factors is simply bounded by $1$.
Each summand of the sum over set partitions is bounded as follows.
We expand by multilinearity the joint cumulant.
\begin{itemize}
  \item One of the terms is 
    \[\kappa \Bigg(\One[A_n] \, M^{-|C|} \,                        
     \Big[ P_C\big( \ll_C,M^{-1}, \tfrac{(j-i) \sigma_n Z_n}{M \cdot m_n} \big)\Big]: C \in \pi\Bigg) 
     \]
     Using \cref{eq:BoundCumuPoly} and the fact $\deg_{\bm x,y^{-1}}(P_C) \le -|C|+1$
     we get that this term is $\tO(n^e)$, where
     \[e \le 1-|\pi| + \sum_{C \in \pi} (-2|C|+1) =1-2r.\]
   \item The other terms are joint cumulants of random variables,
     at least one of which is $\tO(n^{1-r})$ uniformly on the event $A_n$.
     Since all $P_C$ have nonpositive degree in $\bm x$ and $y^{-1}$,
     they are bounded on the event $A_n$ by $\tO(1)$.
     Moreover, on $A_n$, we have $M^{-|C|}=\tO(n^{-|C|})$.
     We therefore use the easy bounds \eqref{eq:Bound_Cumu_Holder} 
     and \eqref{eq:Bound_Cumu_Indicator} for cumulants
     and get that all these terms are 
     \[\tO(n^{1-r-\sum_{C \in \pi} |C|})=\tO(n^{1-2r}).\]
\end{itemize}
To conclude, we get that 
\[ \kappa\left(X_{ij}:(i,j)\in B\right) =\tO(n^{1-2r}).\]
Note that the constants in the $\tO$ symbols only depend on which $\ell_D$ are nonzero,
for subsets $D$ of $B$, and hence can be chosen uniformly for all sets $B$ of size $r$
(assuming, as we have always done so far,
that $i_1,\dots,i_r$, resp $j_1,\dots,j_r$, are distinct).
We have thus proved \eqref{EqFundamentalRepeated_SetPart} for 
lists $i_1,\dots,i_r$ and $j_1,\dots,j_r$ with distinct entries.
As argued at the beginning of the proof,
this ends the proof of \cref{thm:WDG_SetPart}.
\qed

\section{Technical statements for the variance estimate}
\label{sec:variance_estimate}

The goal of this section is to prove the estimates on
$\esper[\OccSP|M]$ and its variance given in \cref{eq:Equiv_Cond_Exp,eq:VarToJustify}.
According to \eqref{eq:CondEsper_BigSum}, we need to understand sums of the following kind
(where we take $a_1$, \dots, $a_t$ to be positive integers and $\ell \ge t$):
\[F_{a_1,\dots,a_t;\ell}(n,M)=
  \sum_{y_1,\ldots,y_t \ge 1 \atop y_1+\ldots+y_t \le n}
  \binom{n-y_1-\cdots-y_t}{\ell-t} \prod_{i \le t} \left( 1-\frac{a_i}{M} \right)^{y_i-1}.
\]
Note that they satisfy the following recursive formula (setting $y=y_t$)
\begin{equation}
  F_{a_1,\dots,a_t;\ell}(n,M)
= \sum_{1\le y \le n} F_{a_1,\dots,a_{t-1};\ell-1}(n-y,M) 
\, \left( 1-\frac{a_t}M \right)^{y-1}
\label{eq:Recursion_F}
\end{equation}
The general strategy is the following:
using this recursive formula, we will prove that such functions
belong to some specific graded space $V$, analyze their highest degree terms,
and show that the expectation and  the variance of an element of $V$ can be bounded above given its degree.
\medskip

We let $V$ to be the $\mathbb Q(M)[n]$-span of $(1-b/M)^n$, where $b$ runs over the set of nonnegative integers
(it is easy to see that the functions $(1-b/M)^n$ are linearly independent over $\mathbb Q[n](M)$).

We now define a gradation $\deg$ on $V$. 
Both variables $n$ and $M$ are of degree $1$.
Moreover, the degree of a term $(1-b/M)^n$ is chosen to be $-b$.
With this convention we can check
that an element of $V$ of degree at most $d$ behaves as $\tO(n^d)$,
as $n$ and $M$ to infinity with $|M-m_n| \le n^{3/4}$. 

\begin{lemma}
  \label{lem:FormCondExp}
  For all positive integers $a_1$, \dots, $a_t$ and $\ell \ge t$,
  the quantity $F_{a_1,\dots,a_t;\ell}(n,M)$ is in $V$.
  Moreover, 
  \[ F_{a_1,\dots,a_t;\ell}(n,M) = \sum_{j_0,\dots,j_t \ge 0 \atop j_0+\dots+j_t=\ell-t} 
  \frac{(-1)^{j_1+\dots+j_t}}{j_0! \, a_1^{j_1+1} \cdots a_t^{j_t+1}}
  n^{j_0}(M)^{j_1+\dots+j_t+t} + \Err(n,M), \] 
  where the error term $\Err(n,M)$ has degree at most $\ell-1$.
\end{lemma}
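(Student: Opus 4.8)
The plan is to prove the lemma by induction on $t$, using the recursion \cref{eq:Recursion_F}. For a positive integer $a$, introduce the operator $\mathcal S_{a}$ acting on functions $f(n,M)$ by
\[(\mathcal S_{a}f)(n,M):=\sum_{y=1}^{n}f(n-y,M)\,\big(1-\tfrac aM\big)^{y-1},\]
so that \cref{eq:Recursion_F} reads $F_{a_1,\dots,a_t;\ell}=\mathcal S_{a_t}\big[F_{a_1,\dots,a_{t-1};\ell-1}\big]$, with base case $F_{\,;\ell}(n,M)=\binom n\ell$. The first task is to understand $\mathcal S_{a}$ on the ``atoms'' $n^{j}(1-b/M)^{n}$, $b\ge 0$. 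Writing $q=1-a/M$, $\rho=1-b/M$ and substituting $z=n-y$ gives $\mathcal S_{a}[n^{j}\rho^{n}](n,M)=q^{\,n-1}\sum_{z=0}^{n-1}z^{j}(\rho/q)^{z}$. I will use the classical identity: for a parameter $w\neq 1$,
\[\sum_{z=0}^{n-1}z^{j}w^{z}=A_{j}(w)-w^{n}\,B_{j}(n,w),\]
where $A_{j}(w):=\sum_{z\ge 0}z^{j}w^{z}\in\mathbb Q(w)$ has a pole of order $j+1$ at $w=1$ with $A_{j}(w)\sim j!\,(1-w)^{-(j+1)}$ as $w\to 1$, and $B_{j}(n,w):=\sum_{i=0}^{j}\binom ji n^{j-i}A_{i}(w)\in\mathbb Q(w)[n]$ has degree $j$ in $n$; while for $w=1$, $\beta_{j}(n):=\sum_{z=0}^{n-1}z^{j}$ is a polynomial in $n$ of degree $j+1$ with leading coefficient $1/(j+1)$. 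Since $\rho/q=(M-b)/(M-a)\to 1$ as $M\to\infty$ and equals $1$ exactly when $a=b$, we obtain, using $q^{\,n-1}(\rho/q)^{n}=q^{-1}\rho^{n}$,
\[\mathcal S_{a}\big[n^{j}(1-\tfrac bM)^{n}\big]=\begin{cases}q^{-1}A_{j}(\rho/q)\,(1-\tfrac aM)^{n}-q^{-1}B_{j}(n,\rho/q)\,(1-\tfrac bM)^{n},&a\neq b,\\[1.2mm]q^{-1}\beta_{j}(n)\,(1-\tfrac aM)^{n},&a=b.\end{cases}\]
In either case the result lies in $V$, so $\mathcal S_{a}$ maps $V$ to $V$, and the membership $F_{a_1,\dots,a_t;\ell}\in V$ follows by induction on $t$.

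Next I would track degrees, using the gradation of $V$ ($\deg n=\deg M=1$, $\deg(1-b/M)^{n}=-b$). As $M\to\infty$ one has $q^{-1}\to 1$, and since $1-\rho/q=(b-a)/(M-a)$, the function $A_{i}(\rho/q)$ has degree exactly $i+1$ (whatever the sign of $b-a$); hence in the displayed formula the ``$(1-b/M)^{n}$-part'' has degree exactly $\deg\big(n^{j}(1-b/M)^{n}\big)+1$, while the ``new'' $(1-a/M)^{n}$-part has degree $\deg\big(n^{j}(1-b/M)^{n}\big)+1+b-a$ (and in the case $a=b$ the single part has degree $\deg\big(n^{j}(1-b/M)^{n}\big)+1$). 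I then prove by induction the following refined statement: for all $\ell\ge t$,
\[F_{a_1,\dots,a_t;\ell}=P_{0}(n,M)+\sum_{b\in\{a_1,\dots,a_t\}}Q_{b}(n,M)\,(1-\tfrac bM)^{n},\]
with $P_{0},Q_{b}\in\mathbb Q(M)[n]$, $\deg P_{0}\le \ell$, $\deg\big(Q_{b}(n,M)(1-b/M)^{n}\big)\le \ell-b$ for each $b$, and the degree-$\ell$ homogeneous part of $P_{0}$ equal to the explicit sum in the statement. The base case $t=0$ is $\binom n\ell=\tfrac1{\ell!}n^{\ell}+(\text{degree}\le\ell-1)$. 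For the step, apply $\mathcal S_{a_t}$ to the decomposition of $F_{a_1,\dots,a_{t-1};\ell-1}$ term by term (coefficients in $\mathbb Q(M)$ pass through $\mathcal S_{a_t}$), and regroup by the exponent $b$: the degree rules above show that the $b=0$ part stays in $\mathbb Q(M)[n]$ with degree $\le\ell$, that every $b=a_i$ part — whether carried over or freshly created — has degree $\le\ell-a_i\le\ell-1$, and hence $\deg F_{a_1,\dots,a_t;\ell}\le\ell$. The step where the sharp induction hypothesis is genuinely needed is precisely this regrouping: when $a_i>a_t$ the operator $\mathcal S_{a_t}$ raises the degree of the $(1-a_i/M)^{n}$-component by $1+a_i-a_t>1$, and only the bound $\deg\le (\ell-1)-a_i$ already available on that component keeps the outcome within degree $\ell$.

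Finally, the degree-$\ell$ part of $F_{a_1,\dots,a_t;\ell}$ lies entirely in $P_{0}$ and, by the analysis above, equals the degree-$\ell$ part of $\mathcal S_{a_t}$ applied to the degree-$(\ell-1)$ part of the $b=0$ component of $F_{a_1,\dots,a_{t-1};\ell-1}$, i.e. to the explicit sum for $t-1$ arguments. For a monomial $n^{j_0}M^{m}$ (here $b=0$, $\rho=1$) one has $\mathcal S_{a_t}[n^{j_0}M^{m}]=M^{m}q^{-1}A_{j_0}(q^{-1})(1-a_t/M)^{n}-M^{m}q^{-1}B_{j_0}(n,q^{-1})$, of which only the second, polynomial-in-$n$ summand feeds $P_{0}$; since $A_{i}(q^{-1})\sim\frac{(-1)^{i+1}i!}{a_t^{\,i+1}}M^{i+1}$, a short computation gives its degree-$(j_0+m+1)$ part as $M^{m}\sum_{k=0}^{j_0}\frac{j_0!}{(j_0-k)!}\,\frac{(-1)^{k}}{a_t^{\,k+1}}\,n^{j_0-k}M^{k+1}$. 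Summing this against $\frac{(-1)^{j_1+\dots+j_{t-1}}}{j_0!\,a_1^{j_1+1}\cdots a_{t-1}^{j_{t-1}+1}}$ over $j_0+j_1+\dots+j_{t-1}=\ell-t$ with $m=j_1+\dots+j_{t-1}+t-1$, and reindexing by $j_0':=j_0-k$ (so $j_0'+j_1+\dots+j_{t-1}+k=\ell-t$), produces precisely
\[\sum_{j_0'+j_1+\dots+j_t=\ell-t}\frac{(-1)^{j_1+\dots+j_t}}{j_0'!\,a_1^{j_1+1}\cdots a_t^{j_t+1}}\,n^{j_0'}M^{j_1+\dots+j_t+t}\qquad (j_t:=k),\]
which is the explicit sum for $t$ arguments. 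This closes the induction, and taking $\Err(n,M):=F_{a_1,\dots,a_t;\ell}-(\text{explicit sum})$ yields an element of $V$ of degree $\le\ell-1$, as claimed. I expect the main obstacle to be the degree bookkeeping of the second paragraph — choosing the induction hypothesis strong enough (the sharp $\ell-b$ bounds on the $(1-b/M)^{n}$-parts) to survive $\mathcal S_{a_t}$, including the repeated-exponent case — together with keeping the signs and multinomial constants straight in the final resummation, which is the rigorous counterpart of the heuristic preceding \cref{lem:LowerBound_Var_SetPart}.
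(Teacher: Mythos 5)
Your proposal is correct and takes essentially the same route as the paper: the same recursion and summation operator, stability of $V$ under it, degree bookkeeping in which only the polynomial-in-$n$ (i.e.\ $b=0$) component can carry the degree-$\ell$ term, and the same final resummation producing the explicit leading Laurent polynomial. The differences are only presentational: your closed forms $A_j$, $B_j$ and the per-exponent bounds $\deg\le\ell-b$ play the roles of the paper's ``modified degree'', of the projection $\NoRn$ and its commutation with $I_a$ (your observation that atoms with $b>0$ never feed the polynomial part), and of the operators $\Phi_a$.
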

\begin{proof}
  For the sake of this proof, we introduce a second gradation $\deg'$ on $V$,
  for which $n$ and $M$ still have degree $1$ but any $(1-b/M)^n$ has degree $0$.
  Obviously for any $f$ in $V$, we have $\deg(f) \le \deg'(f)$.
  To distinguish both gradations, we will refer below to $\deg$ as the {\em standard degree},
  while $\deg'$ is called the {\em modified degree}.

  We introduce some additional notation: let
  \[I_a[G(n,M)]= \sum_{1\le y \le n} G(n-y,M) 
  \, \left( 1-\frac{a}M \right)^{y-1}\]
  be the $\mathbb Q(M)$-linear operator appearing in \cref{eq:Recursion_F}.
   Finally $\NoRn$ is the $\mathbb Q(M)[n]$-linear projection from $V$ to $V$ which sends the basis element $1$ to $1$
   and $(1-b/M)^n$ to $0$ for $b>0$.
   \medskip

   The proof of the lemma involves three claims.
\smallskip

   {\em Claim 1.} 
   The functional $I_a$ maps $V$ to $V$ and increases the modified degree by at most $1$.
\smallskip

   {\em Proof of claim 1.}
   By $\mathbb Q(M)$-linearity, it suffices to consider $I_a[G(n,M)]$, where $G(n,M)=P(n) \, (1-b/M)^n$
   has modified degree $d$ (i.e. $\deg(P)d$).
  By definition and elementary manipulations,
  \begin{equation}
    I_a\big[P(n) \, (1-b/M)^n\big]=(1-b/M)^{n-1}\sum_{1 \le y \le n} P(n-y)  \left( \frac{1-a/M}{1-b/M} \right)^{y-1}.
    \label{eq:Tech6}
 \end{equation}
 If $a=b$, then the above sum is a polynomial of degree $\deg(P)+1=d+1$ in $n$
 and therefore $I_a\big[P(n) \, (1-b/M)^n\big]$ is an element of $P$ of modified degree $d+1$.

 Consequently, we focus on the case $a \ne b$.
  The quantity $P(n-y)$ is a polynomial in $n$ and $y$ 
  and can be decomposed as 
  \begin{equation}
    P(n-y)=\sum_{j \le \deg(P)} \alpha_j \tilde P_j(n) y (y+1)\dots (y+j-1),
    \label{eq:Tech11}
 \end{equation}
  for some constants $\alpha_j$ and polynomials $\tilde P_j$ satisfying $\deg(\tilde P_j)+j \le \deg(P)$.
  This implies 
  \[I_a\big[P(n) \, (1-b/M)^n\big]= (1-b/M)^{n-1} \sum_{j \le \deg(P)} \alpha_j \tilde P_j(n) S_j,\]
  where
  \begin{align}
    S_j & :=  \sum_{1 \le y \le n} y (y+1)\dots (y+j-1) \left( \frac{1-a/M}{1-b/M} \right)^{y-1} 
    = \frac{d^j}{dx^j} \frac{1-x^{n+j}}{1-x} \bigg|_{x=\frac{1-a/M}{1-b/M}}.
    \label{eq:Tech5}\\
    & = \bigg( \frac{j!}{(1-x)^{j+1}}  - \sum_{h=0}^j \binom{j}{h} \, h!\, \frac{(n+j)!}{(n+h)!} \,
    \frac{x^{n+h}}{(1-x)^{h+1}}\bigg)\bigg|_{x=\frac{1-a/M}{1-b/M}}
    \nonumber 
  \end{align}
  We set $R(M):=\frac{1-a/M}{1-b/M}$.
  The above formula shows that each $S_j$ is a $\mathbb Q(M)[n]$ linear combination of $1$ and $R(M)^n$,
  so that $(1-b/M)^{n-1} S_j$ is a $\mathbb Q(M)[n]$ linear combination of $(1-b/M)^n$
  and $(1-b/M)^n R(M)^n=(1-a/M)^n$. This proves that $I_a\big[P(n) \, (1-b/M)^n\big]$ is in $V$.

Let us analyse its modified degree: $R(M)$ has degree $0$, while $\frac{1}{1-R(M)}$ has degree $1$ in $M$.
Since both $(1-b/M)^n$ and $(1-b/M)^n R(M)^n=(1-a/M)^n$ have modified degree $0$,
we see that the modified degree of $(1-b/M)^n S_j$ is at most $j+1$.
From the inequality $\deg(\tilde P_j)+j \le \deg(P)$,
we conclude that $I_a\big[P(n) \, (1-b/M)^n\big]$ has modified degree at most $\deg(P)+1=d+1$. 
  This ends the proof of Claim 1.\qed
\medskip

{\em Claim 2.} For $a>0$, we have $\NoRn \circ I_{a} = \NoRn \circ I_{a}\circ \NoRn$.
\smallskip

{\em Proof of claim 2.} Again, by $\mathbb Q(M)$-linearity,
it is again to check the equality applied to $G(n,M)=P(n) \, (1-b/M)^n$.
If $b=0$, this is trivial since, in this case, $\NoRn(G(n,M))=G(n,M)$.
For $b >0$, $\NoRn(G(n,M))=0$ and we should check that $\NoRn \circ I_{a}(G(n,M))=0$.

This is a consequence of \cref{eq:Tech5} and the discussion after it;
indeed, for $b>0$, the quantity $I_{a}\big[P(n) \, (1-b/M)^n\big]$
is a $\mathbb Q(M)[n]$ linear combination of $(1-b/M)^n$ and $(1-a/M)^n$.
Since $a$ and $b$ are both positive, these two basis elements are sent to $0$
by $\NoRn$ proving Claim 2. \qed
\medskip

{\em Claim 3.} Let $G(n,M)$ be a homogeneous element of degree $d$ in \bm{$\mathbb Q(M)[n]$}
(i.e. without terms of the form $(1-b/M)^n$).
Then, the top homogeneous component of $\NoRn \circ I_a[G(n,M)]$ is $\Phi_a[G(n,M)]$,
where $\Phi_a$ is the $\mathbb Q(M)$ linear map defined by
\[\Phi_a\left( \frac{n^d}{d!}\right)
= \sum_{j=0}^d (-1)^j \frac{n^{d-j}}{(d-j)!}  \frac{M^{j+1}}{a^{j+1}}.\]
\smallskip

{\em Proof of claim 3.}
We focus on the case $G(n,M)=P(n)=n^d$, the genral case following by $\mathbb Q(M)$-linearity.
In this case the decomposition \eqref{eq:Tech11} writes
\begin{multline*}
  P(n-y)= (n-y)^d = \sum_{j=0}^d \binom{d}{j} (-1)^j n^{d-j} y^j\\ = \sum_{j=0}^d \binom{d}{j} (-1)^j n^{d-j} y(y+1)\dots(y+j-1) 
+ \text{smaller degree terms}.
\end{multline*}
Consequently,
\[I_a\big[n^d\big] = \sum_{j=0}^d \binom{d}{j} (-1)^j n^{d-j} S_j + \text{smaller modified degree terms},\]
where $S_j$ is given by \cref{eq:Tech5}, setting $b=0$.
From \cref{eq:Tech5}, we observe that
\[\NoRn(S_j)=\frac{j!}{(1-x)^{j+1}}\bigg|_{x=1-a/M}=j! \frac{M^{j+1}}{a^{j+1}}.\]
Therefore, we have
\[\NoRn \circ I_a\big[n^d\big] = \sum_{j=0}^d \binom{d}{j} (-1)^j n^{d-j} j! \frac{M^{j+1}}{a^{j+1}},\]
concluding the proof of Claim 3. \qed
\medskip

We come back to the proof of the lemma.
Unwrapping the recursion in \cref{eq:Recursion_F}, we have
\[F_{a_1,\dots,a_t;\ell}(n,M) = I_{a_1} \circ I_{a_2} \cdots \circ I_{a_t} \left[ \binom{n}{\ell-t} \right]\]
From Claim 1, it immediately follows that 
$F_{a_1,\dots,a_t;\ell}(n,M)$ is in $V$ and has modified degree at most $\ell$.

We are interested in its homogeneous component of standard degree $\ell$.
Since we know that its modified degree is at most $\ell$,
this top homogeneous part cannot contain any term $(1-b/M)^n$ with $b>0$;
indeed such term have a bigger modified degree as standard degree.
We can therefore consider 
\begin{align*}
  \NoRn(F_{a_1,\dots,a_t;\ell}(n,M))&=\NoRn \circ I_{a_1} \circ I_{a_2} \cdots \circ I_{a_t} \left[ \binom{n}{\ell-t} \right]\\
&= \NoRn \circ I_{a_1} \circ \NoRn \circ I_{a_2} \cdots \NoRn \circ I_{a_t} \left[ \binom{n}{\ell-t} \right],
\end{align*}
where the second equality follows from Claim 2.

Since $\binom{n}{\ell-t}$ has degree $\ell-t$ and that each one of the $t$ operators
$\NoRn \circ I_{a_s}$ increases the degree by at most $1$,
we can only keep the highest degree component all along the computation.
Therefore, using Claim 3, the highest degree component of $F_{a_1,\dots,a_t;\ell}(n,M)$ is
\[ \Phi_{a_1} \circ \Phi_{a_2} \cdots \circ \Phi_{a_t} \left[ \frac{n^{\ell-t}}{(\ell-t)!} \right]
= \sum_{j_0,\dots,j_t \ge 0 \atop j_0+\dots+j_t=\ell-t} 
\frac{(-1)^{j_1+\dots+j_t}}{j_0!\, a_1^{j_1+1} \cdots a_t^{j_t+1}}
n^{j_0}M^{j_1+\dots+j_t+t}.\]
This ends the proof of the lemma.
\end{proof}

In particular the above lemma implies that,
$\Err(n,M) = \tO(n^{\ell-1})$ as $N$ tends to infinity,
uniformly on all $M$ satisfying $|M-m_n| \le n^{3/4}$.
This proves \eqref{eq:Equiv_Cond_Exp}.
\bigskip

We are now interested in controlling the variance of $\esper[\OccSP|M]$.
We start by a general result for the variance of some element in $V$.
\begin{lemma}
  \label{lem:Varf}
  Let $f(n,M)$ be a function in $V$ of degree $d$.
  Assume that $f(n,M)$ is bounded by a polynomial function of $n$ and $M$.
  Then \[\Var\big( f(n,M) \big) = \tO(n^{2d-1}).\]
\end{lemma}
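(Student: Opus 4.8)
The plan is to compare $f(n,M)$ with its value at the deterministic point $M=m_n=\esper(M)$. Since $\esper(M)$ minimises $c\mapsto\esper[(f(n,M)-c)^2]$, we have
\[\Var\big(f(n,M)\big)\le\esper\Big[\big(f(n,M)-f(n,m_n)\big)^2\Big],\]
where $f(n,\cdot)$ is now read as a function of a real variable; this makes sense on a neighbourhood of $m_n$ for $n$ large, since a fixed element of $V$ involves only finitely many rational functions of $M$, whose poles lie at $n$-independent points while $m_n\to\infty$. The right-hand side will then be split along the concentration event $A_n=\{|M-m_n|\le n^{3/4}\}$ of \cref{lem:Concentration_M}.

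The structural input I would establish first is that differentiation in $M$ maps $V$ into $V$ and decreases the degree by at least $1$. By $\mathbb Q(M)$-linearity and the product rule, it suffices to treat a basis element $g=Q(M)\,n^k\,(1-b/M)^n$, whose degree is $\deg Q+k-b$. Using the identity
\[\partial_M\big[(1-b/M)^n\big]=\frac{nb}{M(M-b)}\,(1-b/M)^n,\]
one gets $\partial_M g=Q'(M)\,n^k\,(1-b/M)^n+Q(M)\,\frac{b\,n^{k+1}}{M(M-b)}\,(1-b/M)^n\in V$, and both summands have degree $\deg Q+k-b-1$ (recall that differentiating a rational function drops its degree by one, and that $\tfrac1{M(M-b)}$ has degree $-2$). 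Hence if $\deg f\le d$ then $\partial_M f\in V$ with $\deg(\partial_M f)\le d-1$, and by the grading property recorded just before \cref{lem:FormCondExp} this yields $\sup_{|t-m_n|\le n^{3/4}}\big|\partial_M f(n,t)\big|=\tO(n^{d-1})$.

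On $A_n$, the segment joining $m_n$ and $M$ avoids the poles of $f(n,\cdot)$ for $n$ large, so the fundamental theorem of calculus gives $|f(n,M)-f(n,m_n)|\le|M-m_n|\cdot\tO(n^{d-1})$; since $m_n=\esper(M)$ and $\Var(M)=\tTheta(n)$ by \eqref{eq:asymp_sin}, the contribution of $A_n$ to $\esper[(f(n,M)-f(n,m_n))^2]$ is $\tO(n^{2d-1})$. On $\compl{A_n}$, I would use the hypothesis that $f$ is polynomially bounded in $n$ and $M$ together with the fact that every moment of $M$ is $\tO$ of a power of $n$ (from \eqref{eq:momentM}): Cauchy--Schwarz and the bound $\P(\compl{A_n})=oe(n)$ from \cref{lem:Concentration_M} then bound this contribution by $oe(n)$, hence in particular by $\tO(n^{2d-1})$. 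Adding the two contributions gives the statement.

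I expect the only delicate point to be the first structural step — verifying that $\partial_M$ preserves $V$ and strictly decreases the degree — together with the minor bookkeeping of viewing $f(n,\cdot)$ as a smooth real-variable function near $m_n$ (finitely many poles at fixed locations, $m_n\to\infty$, so the whole window $[m_n-n^{3/4},m_n+n^{3/4}]$ is eventually pole-free). Everything else is routine given the grading property stated before \cref{lem:FormCondExp} and the concentration estimate \cref{lem:Concentration_M}.
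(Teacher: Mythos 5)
Your argument is correct, but it follows a genuinely different route from the paper. The paper truncates to the event $A_n$, reduces by bilinearity and Cauchy--Schwarz to a single term $R(M)(1-b/M)^n$, expands $(1-b/M)^n=\exp[n\log(1-b/M)]$ around $m_n$ to extract a deterministic factor times a Laurent polynomial in $n,M,m_n$ plus a small error, and then invokes the cumulant estimates (\cref{corol:bound_Cumu_Powers,corol:bound_Cumu_Powers_MInv_Zn}) giving $\Var(M^r)=n^{2r}\tO(n^{-1})$ for each Laurent monomial. You instead bound $\Var(f(n,M))$ by $\esper[(f(n,M)-f(n,m_n))^2]$, control the main contribution on $A_n$ by the mean value theorem together with a new but easily verified structural fact (that $\partial_M$ maps $V$ to $V$ and lowers the degree by at least one, so $\sup|\partial_M f|=\tO(n^{d-1})$ on the window), and dispose of $\compl{A_n}$ via the polynomial-boundedness hypothesis, \eqref{eq:momentM} and \cref{lem:Concentration_M}. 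Your route is shorter and needs only $\Var(M)=\tTheta(n)$ from \eqref{eq:asymp_sin} rather than the joint-cumulant corollaries; the paper's expansion technique, on the other hand, is the same machinery reused in \cref{lem:AsympVar1,lem:AsympVar2} to get matching asymptotics (not just upper bounds), so it shares infrastructure with the rest of \cref{sec:variance_estimate}, whereas your argument only yields the upper bound -- which is all \cref{lem:Varf} claims. Two small repairs: the minimizer of $c\mapsto\esper[(f(n,M)-c)^2]$ is $\esper[f(n,M)]$, not $\esper(M)$, but your inequality is valid anyway since $\Var(X)\le\esper[(X-c)^2]$ for every constant $c$; and you should state explicitly that the grading property before \cref{lem:FormCondExp} holds uniformly for \emph{real} arguments $t$ with $|t-m_n|\le n^{3/4}$ (the paper's verification extends verbatim), since your mean value step takes a supremum of $\partial_M f$ over the whole segment joining $m_n$ to $M$.
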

\begin{proof}
  Note that any polynomial function of $n$ and $M$ is bounded in $2$-norm
  by a polynomial of $n$
  (recall that $\|M\|_r=\O(n)$ for any $r \ge 1$).
  Moreover, from \cref{lem:Concentration_M},
  the event $A_n=\{|M-m_n| \le n^{3/4}\}$ has probability $1-oe(n)$.
  Therefore 
  \[\Var\big(f(n,M)\big)=\Var\big(f(n,M) \One_{A_n}\big)+oe(n).\]
  and we aim to prove that 
  \[\Var\big(f(n,M) \One_{A_n}\big)=\tO(n^{2d-1}).\]

  By $\mathbb Q[n]$-bilinearity and Cauchy-Schwartz inequality, 
  it is enough to consider the case where $f(n,M)=R(M) (1-b/M)^n$, 
  for some $b \in \mathbb Z_{\ge 0}$.
  Performing a Taylor series expansion of the logarithm,
  we write, for some integer $A>0$,
  \[
     (1-b/M)^n=\exp\big[n\log(1-b/M) \big] 
= \exp\big[\tfrac{-b\, n}{M} + n P_1(M^{-1}) +\O(M^{-A}) \big],
\]
where $P_1$ is a polynomial in $M^{-1}$ with no constant and linear terms.
Continuing the computation, we have
\[(1-b/M)^n = \exp\big[\tfrac{-b \, n}{m_n}\big]\, \exp\big[\tfrac{-b \, n(M-m_n)}{M m_n}\big]\, 
\exp\big[n P_1(M^{-1}) \big]\, \big(1+\O(M^{-A}) \big). \]
When $|M-m_n| \le n^{3/4}$, we have $\tfrac{-b \, n(M-m_n)}{M m_n}=\tO(n^{-1/4})$
and $n P_1(M^{-1}) =\tO(n^{-1})$.
Doing some series expansion of the second and third exponential factors above, we get
\[\frac{(1-b/M)^n}{\exp\big[\tfrac{-b \, n}{m_n}\big]} =P(n,M,m_n) +\tO(n^{-A}),\]
where $P(n,M,m_n)$ is a Laurent polynomial of total degree $0$.
We now multiply by $R(M)$ and perform a series expansion of $R(M)$ on the right hand side:
this gives
\[\frac{R(M)\, (1-b/M)^n}{\exp\big[\tfrac{-b \, n}{m_n}\big]} =\tilde{P}(n,M,m_n) +\tO(n^{d_R-A}),\]
where $\tilde P(n,M,m_n)$ is a Laurent polynomial of total degree $d_R:=\deg(R)$.
Since, for all integers $r$, we have
$\Var(M^r)=n^{2r} \tO(n^{-1})$ (see \cref{corol:bound_Cumu_Powers,corol:bound_Cumu_Powers_MInv_Zn}),
it is clear that all Laurent monomials degree $d$ in $n$, $M$ and $m_n$
have variance $\tO(n^{2d-1})$.
Hence using Cauchy-Schwartz inequality and choosing $A=1$,
we have
\begin{align*}
  \Var\left( \frac{R(M)\,(1-b/M)^n}{\exp\big[\tfrac{-bn}{m_n}\big]} \One_{A_n} \right)
  &=\Var\bigg(\tilde P(n,M,m_n) +\tO\big(n^{d_R-A}\big)\bigg)\\
  &=\tO\big(n^{2d_R-1}\big).
\end{align*}
Since the denominator is deterministic and has order $n^{-b}$, this implies
\[\Var\Big(R(M)\, (1-b/M)^n\, \One_{A_n} \Big) = \tO\big(n^{2(d_R-b)-1}\big),\]
as wanted.
\end{proof}
\begin{corollary}
  \label{corol:Var}
  If $R_0(M)$ denotes the Laurent polynomial appearing in \eqref{eq:CondEsper_BigSum},
  we have 
  \[\Var\left( \tfrac{R_0(M)}{M^a} \Err(n,M))\right)=\tO(n^{2\ell-2a-3}).\]
\end{corollary}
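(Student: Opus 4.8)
The plan is to recognise $\frac{R_0(M)}{M^a}\Err(n,M)$ as an element of the graded space $V$ of sufficiently small degree, and then to read the variance bound off \cref{lem:Varf}. Concretely, by \cref{lem:FormCondExp} the function $F_{a_1,\dots,a_t;\ell}(n,M)$ occurring in \eqref{eq:CondEsper_BigSum} lies in $V$ and splits as $F_{a_1,\dots,a_t;\ell}(n,M)=T(n,M)+\Err(n,M)$, where $T(n,M)$ is the explicit homogeneous polynomial of degree $\ell$ displayed in that lemma and $\deg(\Err)\le \ell-1$. Since $T(n,M)\in\mathbb{Q}(M)[n]=\mathbb{Q}(M)[n]\cdot(1-0/M)^n\subseteq V$, the error term $\Err(n,M)=F_{a_1,\dots,a_t;\ell}(n,M)-T(n,M)$ is itself in $V$, of degree at most $\ell-1$.

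Next I would track the degree through multiplication by the prefactor. The function $R_0(M)$ is a fixed Laurent polynomial in $M$ of degree $0$, so $\frac{R_0(M)}{M^a}\in\mathbb{Q}(M)$ has degree $-a$; multiplying the element $\Err(n,M)$ of the $\mathbb{Q}(M)[n]$-module $V$ by this scalar stays inside $V$ and shifts the degree by $-a$. Hence $\frac{R_0(M)}{M^a}\Err(n,M)\in V$ with degree at most $\ell-1-a$. It then only remains to verify the hypothesis of \cref{lem:Varf} that this function is bounded by a polynomial in $n$ and $M$, after which the lemma gives $\Var\big(\frac{R_0(M)}{M^a}\Err(n,M)\big)=\tO(n^{2(\ell-1-a)-1})=\tO(n^{2\ell-2a-3})$, which is exactly the claim.

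The boundedness check is the one point requiring care, because $F_{a_1,\dots,a_t;\ell}(n,M)$ — and hence $\Err(n,M)$ on its own — is \emph{not} polynomially bounded in $M$: for small values of $M$ the factors $(1-a_i/M)^{y_i-1}$ can blow up. The way around this is to substitute $F_{a_1,\dots,a_t;\ell}=T+\Err$ back into \eqref{eq:CondEsper_BigSum}, which gives
\[\frac{R_0(M)}{M^a}\Err(n,M)=\esper\big[\OccSP\,\big|\,M\big]-\frac{R_0(M)}{M^a}\,T(n,M).\]
Here $\esper[\OccSP\mid M]$ is the conditional expectation of a sum of at most $\binom{n}{\ell}$ indicator variables, so it lies a.s.\ in $[0,\binom{n}{\ell}]$ and is bounded by the polynomial $n^{\ell}$ in $n$ alone; and $\frac{R_0(M)}{M^a}T(n,M)$ is a fixed Laurent polynomial in $M$ times a polynomial in $n$, which — since $M\ge 1$ almost surely — is bounded by a polynomial in $n$ and $M$. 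Thus the difference is polynomially bounded, \cref{lem:Varf} applies, and the corollary follows. The main (and essentially only) obstacle is precisely this a priori estimate: one cannot argue termwise with $\Err$, but must instead exploit the trivial combinatorial bound $0\le\esper[\OccSP\mid M]\le\binom{n}{\ell}$ that comes from the definition of $\OccSP$.
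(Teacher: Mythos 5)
Your proposal is correct and follows essentially the same route as the paper: identify $\tfrac{R_0(M)}{M^a}\Err(n,M)$ as an element of $V$ of degree at most $\ell-a-1$ via \cref{lem:FormCondExp}, check polynomial boundedness through the identity with $\esper[\OccSP|M]$ (which the paper asserts tersely and you spell out), and conclude by \cref{lem:Varf}.
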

\begin{proof}
  We know that $\Err(n,M)$ is an element of $V$ of degree at most $\ell-1$,
  which implies that $\tfrac{R_0(M)}{M^a} \Err(n,M)$ is also an element of $V$,
  of degree $\ell-a-1$.
  Moreover, since  $ \esper(\OccSP|M)$ is bounded by a polynomial function
  in $n$ and $M$ (namely $n^{|\mathcal A|}$),
   the quantity $\tfrac{R_0(M)}{M^a} \Err(n,M)$ also has this property.
  We can therefore apply \cref{lem:Varf} and conclude.
\end{proof}

Now consider Laurent polynomials $P(n,M)$ in the variables $n$ and $M$.
We order monomials $n^x M^y$, with the lexicographic order on $(x+y,x)$;
this is consistent with the natural asymptotic ordering when $A_n$ holds.
\begin{lemma}
  \label{lem:AsympVar1}
  Assume that 
 \begin{equation}
  P(n,M)= n^{x_0} M^{y_0} + \text{smaller degree terms}
  \label{eq:PnM}
\end{equation}
  with $y_0 \ne 0$ (in particular, $M$ does appear in the dominant monomial).
  Then
  \[\Var(P(n,M))= \Var( n^{x_0} M^{y_0})(1+o(1)) = \tTheta(n^{2x_0+2y_0-1}).\]
\end{lemma}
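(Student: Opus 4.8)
\emph{Reduction.} I would expand the variance by bilinearity: writing $P(n,M)=\sum_{(x,y)}c_{xy}\,n^{x}M^{y}$ as a finite sum, with $c_{x_0y_0}=1$ and $(x+y,x)<_{\mathrm{lex}}(x_0+y_0,x_0)$ for every other pair, one has
\[
  \Var\big(P(n,M)\big)=\sum_{(x,y),\,(x',y')}c_{xy}\,c_{x'y'}\;n^{x+x'}\;\Cov\big(M^{y},M^{y'}\big),
\]
so that everything reduces to (i) pinning down the exact order of $\Var(M^{y})$ for a single integer exponent $y$, and (ii) showing that the lexicographic constraint makes all terms of the double sum negligible in front of the diagonal term attached to $(x_0,y_0)$.

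\emph{Step 1: variance of a power of $M$.} I claim that, for every integer $y\neq0$,
\[
  \Var(M^{y})=y^{2}\,m_n^{2y-2}\,\si_n^{2}\,(1+o(1))=\Theta\big(n^{2y-1}(\ln n)^{-2y}\big),
\]
while $\Var(M^{0})=0$. The upper bound $\Var(M^{y})=\tO(n^{2y-1})$ is the case $r=2$ of \cref{corol:bound_Cumu_Powers} when $y\ge1$ and of \cref{corol:bound_Cumu_Powers_MInv_Zn} when $y\le-1$; the point is the matching lower bound, for which I would run the delta method, as in the proof of \cref{lem:Varf}. Writing $M=m_n+\si_nZ_n$ with $Z_n$ as in \eqref{eq:bound_Cumu_Zn}, one expands $M^{y}=m_n^{y}+y\,m_n^{y-1}\si_nZ_n+E_n$, where $E_n$ gathers the higher-order terms — a finite sum when $y\ge1$, and the remainder of the binomial series, truncated at a fixed order and controlled on $A_n=\{|M-m_n|\le n^{3/4}\}$, when $y\le-1$ (using $\P(\compl{A_n})=oe(n)$ from \cref{lem:Concentration_M} and the boundedness of the $r$-norms of $M^{y}$, as in \cref{lem:Varf}). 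The constant term $m_n^{y}$ is deterministic; the linear term has variance $y^{2}m_n^{2y-2}\si_n^{2}\Var(Z_n)=y^{2}m_n^{2y-2}\si_n^{2}$ since $\Var(Z_n)=1$ exactly; and, using $\Var(Z_n^{k})=\O(1)$ (the moments of $Z_n$ converge) together with $m_n=\tTheta(n)$, $\si_n^{2}=\tTheta(n)$ from \eqref{eq:asymp_mn}--\eqref{eq:asymp_sin}, one gets $\Var(E_n)=\tO(n^{2y-3/2})$, which is of strictly smaller order than $n^{2y-1}(\ln n)^{-2y}$. Controlling the cross terms by Cauchy--Schwarz yields the displayed identity, and the special case $P=n^{x_0}M^{y_0}$ gives $\Var(n^{x_0}M^{y_0})=\tTheta(n^{2x_0+2y_0-1})$, the hypothesis $y_0\neq0$ ensuring the leading coefficient $y_0^{2}$ is positive.

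\emph{Step 2: domination by the leading monomial.} For a pair of monomials at least one of which is not $(x_0,y_0)$, Cauchy--Schwarz and Step 1 give
\[
  \big|\,n^{x+x'}\,\Cov(M^{y},M^{y'})\,\big|\;\le\;n^{x+x'}\sqrt{\Var(M^{y})\,\Var(M^{y'})}\;=\;\Theta\big(n^{(x+y)+(x'+y')-1}(\ln n)^{-(y+y')}\big),
\]
to be read as $0$ when $y=0$ or $y'=0$. Compare this with $\Var(n^{x_0}M^{y_0})=\Theta\big(n^{2(x_0+y_0)-1}(\ln n)^{-2y_0}\big)$: since $x+y\le x_0+y_0$ and $x'+y'\le x_0+y_0$, either $(x+y)+(x'+y')<2(x_0+y_0)$, and the ratio carries a negative power of $n$ and tends to $0$; or $(x+y)+(x'+y')=2(x_0+y_0)$, which forces $x+y=x'+y'=x_0+y_0$, so that the pair differing from $(x_0,y_0)$ has second coordinate $>y_0$, hence $y+y'>2y_0$ and the ratio equals $\Theta\big((\ln n)^{2y_0-y-y'}\big)\to0$. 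Thus every term of the double sum except the diagonal term $n^{2x_0}\Var(M^{y_0})$ is $o\big(\Var(n^{x_0}M^{y_0})\big)$; as the sum is finite,
\[
  \Var\big(P(n,M)\big)=n^{2x_0}\Var(M^{y_0})(1+o(1))=\Var\big(n^{x_0}M^{y_0}\big)(1+o(1))=\tTheta\big(n^{2x_0+2y_0-1}\big),
\]
which is \cref{lem:AsympVar1}.

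\emph{Main difficulty.} Everything hinges on Step 1 being sharp at the level of the exponent of $\ln n$. The bounds obtained directly from \cref{corol:bound_Cumu_Powers,corol:bound_Cumu_Powers_MInv_Zn} only control cumulants up to logarithmic factors and hence cannot distinguish two monomials with the same value of $x+y$; it is exactly the surplus of $\ln n$-factors suppressing the variance of the non-leading monomials with $x+y=x_0+y_0$ — a consequence of $m_n\sim n/\ln n$ and $\si_n^{2}\sim n/(\ln n)^{2}$ — that makes them negligible. Extracting this sharp estimate is what forces going through the delta-method expansion of $M^{y}$ about $m_n$, keeping track of the leading coefficient $y^{2}$ and of the exact value $\Var(Z_n)=1$; the truncation and remainder bookkeeping for $y<0$ and the removal of $\compl{A_n}$ are routine and mirror the proof of \cref{lem:Varf}.
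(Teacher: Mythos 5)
Your proof is correct and follows essentially the same route as the paper: both rest on the expansion $M=m_n+\sigma_n Z_n$, the dominance of the $Z_n$-linear term (with $\Var(Z_n)=1$ exactly), the treatment of negative powers by a truncated series on $A_n$ with the complement removed via $oe(n)$, and the $\ln n$-bookkeeping coming from $m_n\sim n/\ln n$ and $\sigma_n^2\sim n/(\ln n)^2$. The only organizational difference is that the paper computes sharp asymptotics for every pairwise covariance $\Cov(n^{x_1}M^{y_1},n^{x_2}M^{y_2})$ and leaves the final comparison of monomials implicit, whereas you use only the sharp diagonal variances, bound the off-diagonal terms by Cauchy--Schwarz, and spell out the lexicographic comparison explicitly — a valid and slightly more explicit variant of the same argument.
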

\begin{proof}
As above, we write $M=m_n+\sigma_n Z_n$, with $Z_n$ asymptotically normal.
For any $x_1,y_1,x_2,y_2$ with $y_1 \ne y_2$, we have
\begin{multline*}
  \Cov(n^{x_1} M^{y_1} ,n^{x_2} M^{y_2}) =\\
  n^{x_1+x_2} \sum_{k_1,k_2 \ge 0} \binom{y_1}{k_1} \binom{y_2}{k_2} m_n^{y_1+y_2-k_1-k_2} \sigma_n^{k_1+k_2} \Cov( Z_n^{k_1},Z_n^{k_2}),
\end{multline*}
where the sum might be finite or infinite depending on the signs of $y_1$ and $y_2$.
The summand with the largest asymptotic behaviour correspond to $k_1=k_2=1$
(if $k_1=0$ or $k_2=0$,         
the corresponding summand is $0$).
Therefore, when the sum is finite (i.e. when $y_1,y_2>0$), we have
\[\Cov(n^{x_1} M^{y_1} ,n^{x_2} M^{y_2}) 
 = n^{x_1+x_2} m_n^{y_1+y_2-2} \sigma_n^2 (1+o(1)).\]
The same can be proved when $y_1$ and/or $y_2$ is negative,
using the technique of \cref{prop:Cumu_MInverse}; details are left to the reader.

Writing $P(n,M)$ as a sum of monomials and expanding $\Var(P(n,M))$ concludes the proof.
\end{proof}

We now give a similar result when $P(n,M)$ is perturbed by an error term with sufficiently small variance.
\begin{lemma}
  \label{lem:AsympVar2}
  We take a Laurent polynomial $P(n,M)$ as in \eqref{eq:PnM}
  and let $E(n,M)$ be a function with 
  $\Var(E(n,M))= \tO(n^{2x_0+2y_0-3})$.
   Then
  \[ \Var(P(n,M)+E(n,M)) = \Var(P(n,M))(1+o(1)) = \tTheta(n^{2x_0+2y_0-1}).\] 
\end{lemma}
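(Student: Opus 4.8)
The plan is to reduce the statement to \cref{lem:AsympVar1} by expanding the variance bilinearly and bounding the cross term by Cauchy--Schwarz. First I would write
\[\Var\big(P(n,M)+E(n,M)\big) = \Var\big(P(n,M)\big) + 2\Cov\big(P(n,M),E(n,M)\big) + \Var\big(E(n,M)\big),\]
noting that all three terms are finite: $P(n,M)$ is a Laurent polynomial in $n$ and $M$, and since $M \ge 1$ while $\|M\|_r = \O(n)$ for each $r \ge 1$, it has moments of all orders; $E(n,M)$ has finite variance by hypothesis; hence the covariance is finite.

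Next I would apply \cref{lem:AsympVar1} to get $\Var(P(n,M)) = \tTheta(n^{2x_0+2y_0-1})$, which will play the role both of the main term and of a polynomial lower bound for the denominators below. Then Cauchy--Schwarz together with $\Var(E(n,M)) = \tO(n^{2x_0+2y_0-3})$ yields
\[\big|\Cov\big(P(n,M),E(n,M)\big)\big| \le \sqrt{\Var(P(n,M))}\,\sqrt{\Var(E(n,M))} = \tO\big(n^{2x_0+2y_0-2}\big).\]
Dividing by $\Var(P(n,M)) = \tTheta(n^{2x_0+2y_0-1})$, the relative size of the cross term is $\tO(n^{-1}) = o(1)$, and similarly $\Var(E(n,M))/\Var(P(n,M)) = \tO(n^{-2}) = o(1)$. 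Using in addition $\Var(E(n,M)) \ge 0$, this gives both $\Var(P(n,M)+E(n,M)) \le \Var(P(n,M))(1+o(1))$ and $\Var(P(n,M)+E(n,M)) \ge \Var(P(n,M)) - 2\big|\Cov(P(n,M),E(n,M))\big| = \Var(P(n,M))(1-o(1))$, hence $\Var(P(n,M)+E(n,M)) = \Var(P(n,M))(1+o(1)) = \tTheta(n^{2x_0+2y_0-1})$, as desired.

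I do not expect any serious obstacle; the only delicate point is making sure the asymptotic bookkeeping with $\tO$ and $\tTheta$ is sound, namely that a $\tO(n^{a-1})$ quantity divided by a $\tTheta(n^a)$ quantity is $o(1)$ --- which is true because the polylogarithmic factors hidden in the two symbols combine into a single $\tO(n^{-1})$, still tending to $0$; one must be careful that $\tTheta$ genuinely supplies the polynomial lower bound used in the denominator. In the intended application, $E(n,M) = \tfrac{R_0(M)}{M^a}\,\Err(n,M)$ is bounded by a polynomial in $n$ and $M$ (since $\esper(\OccSP|M) \le n^{\ell}$) and satisfies $\Var(E(n,M)) = \tO(n^{2\ell-2a-3})$ by \cref{corol:Var}, while $P(n,M)$ is the dominant Laurent polynomial furnished by \cref{lem:FormCondExp}; this is exactly the hypothesis of the present lemma, so it yields the estimate \eqref{eq:VarToJustify}.
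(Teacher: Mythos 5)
Your proof is correct and is exactly the paper's argument: the paper proves this lemma by expanding the variance bilinearly and bounding the cross term via Cauchy--Schwarz, using \cref{lem:AsympVar1} for the order of $\Var(P(n,M))$, precisely as you do. Your bookkeeping with the $\tO$/$\tTheta$ symbols (the cross term being $\tO(n^{2x_0+2y_0-2})$, hence relatively $o(1)$) is sound.
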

\begin{proof}
  This is a trivial application of Cauchy-Schwartz inequality after expanding the variance.
\end{proof}
Looking at \cref{eq:CondEsper_BigSum,lem:FormCondExp,corol:Var}, we see that $ \esper(\OccSP|M)$
is of the form $P(n,M)+E(n,M)$, where $\Var(E(n,M))= \tO(n^{2\ell-2a-3})$
and $P(n,M)$ is a Laurent polynomial with dominant term $\tfrac{1}{(\ell-t)!} n^{\ell-t} M^{t-a}$.
If $t \ne a$, \cref{lem:AsympVar2} directly applies and \eqref{eq:VarToJustify} is proved.
If $t=a$ we simply apply \cref{lem:AsympVar2} to $\esper(\OccSP|M) - \tfrac{1}{(\ell-t)!} n^{\ell-t}$:
this difference has the same variance as $\esper(\OccSP|M)$ (we removed a deterministic quantity)
and is of the desired form: its dominant term is $\Theta(n^{\ell-t-1} M^{-1})$.
This concludes the proof of \eqref{eq:VarToJustify}.

\section*{Acknowledgements}
The author would like to thank Marko Thiel, with whom this work was initiated.
Many thanks are also due to an anonymous referee, whose suggestions greatly improved
the presentation of the paper.

\newcommand{\etalchar}[1]{$^{#1}$}

\end{document}